\documentclass[]{amsart}

\usepackage{amsfonts}
\usepackage{amscd}
\usepackage{amsmath, mathrsfs, amssymb}
\usepackage{amsthm}
\usepackage{setspace}
\usepackage{hyperref}
\usepackage{color}
\usepackage{epsfig}
\usepackage{here}
\usepackage{graphicx}
\usepackage[all]{xy}
\usepackage{psfrag}
\usepackage{graphicx,transparent}
\usepackage{enumerate}
\usepackage{caption}
 
        \setlength{\belowcaptionskip}{-6pt}

\theoremstyle{plain}
\newtheorem{theorem}{Theorem}[section]
\newtheorem{lemma}[theorem]{Lemma}
\newtheorem{proposition}[theorem]{Proposition}
\newtheorem{corollary}[theorem]{Corollary}

\theoremstyle{definition}
\newtheorem{remark}[theorem]{Remark}

\newtheorem{example}[theorem]{Example}

\newcommand{\MM}{\mathcal M}

\newcommand{\BM}{\overline{\mathcal M}}

\newcommand{\PP}{\mathbb P}

\newcommand{\calC}{\mathcal C}

\newcommand{\calX}{\mathcal X}

\newcommand{\OO}{\mathcal O}

\newcommand{\RR}{\mathbb R}
\newcommand{\BHH}{\overline{\mathcal H}}
\newcommand{\BPP}{\overline{\mathcal P}}
\newcommand{\calP}{\mathcal P}
\newcommand{\BSS}{\overline{\mathcal S}}
\newcommand{\HH}{\mathcal H}

\newcommand{\calL}{\mathcal L}
\newcommand{\calF}{\mathcal F}

\newcommand{\hyp}{\operatorname{hyp}}
\newcommand{\GL}{\operatorname{GL}}

\newcommand{\even}{\operatorname{even}}
\newcommand{\odd}{\operatorname{odd}}

\newcommand{\wt}{\widetilde}

\newcommand{\Aut}{\operatorname{Aut}}

\newcommand{\ord}{\operatorname{ord}}

\newcommand{\bbC}{\mathbb C}

\newcommand{\bbP}{\mathbb P}

\newcommand{\bbZ}{\mathbb Z}

\newcommand{\SL}{\operatorname{SL}}

\newcommand{\Res}{\operatorname{Res}}

\makeatletter
	\@namedef{subjclassname@2010}{%
	\textup{2010} Mathematics Subject Classification}
	\makeatother

\title{Degenerations of Abelian Differentials}

\author{Dawei Chen}
\address{Department of Mathematics, Boston College, Chestnut Hill, MA 02467}
\email{dawei.chen@bc.edu}

\subjclass[2010]{14H10, 14H15, 14K20}
\keywords{Abelian differential, translation surface, moduli space of curves, limit linear series, spin structure, admissible cover, Weierstrass point.}

\date{\today}

\thanks{During the preparation of this article the author is partially supported by the NSF CAREER grant DMS-1350396.}

\begin{document}

\begin{abstract}
Consider degenerations of Abelian differentials with prescribed number and multiplicity of zeros and poles. Motivated by the theory of limit linear series, we define twisted canonical divisors on pointed nodal curves to study degenerate differentials, give dimension bounds for their moduli spaces, and establish smoothability criteria. As applications, we show that the spin parity of holomorphic and meromorphic differentials extends to distinguish twisted canonical divisors in the locus of stable pointed curves of pseudocompact type. We also justify whether zeros and poles on general curves in a stratum of differentials can be Weierstrass points. Moreover, we classify twisted canonical divisors on curves with at most two nodes in the minimal stratum in genus three. Our techniques combine algebraic geometry and flat geometry. Their interplay is a main flavor of the paper. 
\end{abstract}

\maketitle

\setcounter{tocdepth}{1}
\tableofcontents

\section{Introduction}
\label{sec:intro}

An \emph{Abelian differential} defines a flat metric on the underlying Riemann surface with conical singularities at its zeros. Varying the flat structure by $\GL^+_2(\RR)$ induces an action on the moduli space of Abelian differentials, called \emph{Teichm\"uller dynamics}. A number of questions about the geometry of a Riemann surface boil down to the study of its $\GL^+_2(\RR)$-orbit, which has provided abundant results in various fields. 
To name a few, Kontsevich and Zorich (\cite{KontsevichZorich}) classified connected components of strata of Abelian differentials with prescribed number and multiplicity of zeros. Surprisingly those strata can have up to three connected components, due to hyperelliptic and spin structures. Eskin and Okounkov (\cite{EskinOkounkov})
used symmetric group representations and modular forms to enumerate special $\GL^+_2(\RR)$-orbits arising from covers of tori with only one branch point, which allows them to compute volume asymptotics of strata of Abelian differentials. Eskin and Masur (\cite{EskinMasur})
proved that the number of families of bounded closed geodesics on generic flat surfaces in a $\GL^+_2(\RR)$-orbit closure has quadratic asymptotics, whose leading term satisfies a formula of Siegel-Veech type. Eskin, Kontsevich, and Zorich (\cite{EKZ})
 further related a version of this Siegel-Veech constant to the sum of Lyapunov exponents under the Teichm\"uller geodesic flow. In joint work with M\"oller (\cite{ChenMoellerAbelian, ChenMoellerQuadratic}) the author applied intersection theory on moduli spaces of curves to prove a nonvarying phenomenon of sums of Lyapunov exponents for Teichm\"uller curves in low genus. A recent breakthrough by Eskin, Mirzakhani, and Mohammadi 
 (\cite{EskinMirzakhani, EskinMirzakhaniMohammadi}) 
 showed that the closure of any $\GL^+_2(\RR)$-orbit is an affine invariant manifold, i.e. locally it is cut out by linear equations of relative period coordinates with real coefficients. More recently Filip (\cite{Filip}) proved that all affine invariant manifolds are algebraic varieties defined over 
 $\overline{\mathbb Q}$, generalizing M\"oller's earlier work on Teichm\"uller curves (\cite{MoellerHodge}). 

Despite the analytic guise in the definition of Teichm\"uller dynamics, there is a fascinating and profound algebro-geometric foundation behind the story, already suggested by some of the results mentioned above. In order to borrow tools from algebraic geometry, the upshot is to understanding degenerations of Abelian differentials, or equivalently, describing a compactification of strata of Abelian differentials, analogous to the 
Deligne-Mumford compactification of the moduli space of curves by adding stable nodal curves. This is the focus of the current paper. 

We use $g$ to denote the genus of a Riemann surface or a smooth, complex algebraic curve. Let $\mu = (m_1, \ldots, m_n)$ be a partition of $2g-2$. Consider the space $\HH(\mu)$ parameterizing pairs 
$(C, \omega)$, where $C$ is a smooth, connected, compact complex curve of genus $g$, and $\omega$ is a holomorphic Abelian differential on $C$ such that $(\omega)_0 = m_1 p_1 + \cdots + m_n p_n$
for distinct points $p_1,\ldots, p_n \in C$. 
We say that $\HH(\mu)$ is the \emph{stratum of (holomorphic) Abelian differentials with signature $\mu$}. For a family of differentials in $\HH(\mu)$, if the underlying smooth curves degenerate to a nodal curve, what is the corresponding limit object of the differentials? In other words, is there a geometrically meaningful compactification of $\HH(\mu)$ and can we describe its boundary elements? 

The space of all Abelian differentials on genus $g$ curves forms a vector bundle $\HH$ of rank $g$, called the 
\emph{Hodge bundle}, over the \emph{moduli space $\MM_g$ of smooth genus $g$ curves}. Let $\BM_g$ be the 
\emph{Deligne-Mumford moduli space of stable nodal genus $g$ curves}. The Hodge bundle $\HH$ extends to a rank $g$ vector bundle 
$\BHH$ over $\BM_g$. If $C$ is nodal, the fiber of $\BHH$ over $C$ can be identified with 
$H^0(C, K)$, where $K$ is the \emph{dualizing line bundle of $C$}. Geometrically speaking, 
$H^0(C, K)$ is the space of \emph{stable differentials $\wt{\omega}$} such that $\wt{\omega}$ has at worst simple pole 
at each node of $C$ with residues at the two branches of every node adding to zero (see e.g. \cite[Chapter 3.A]{HarrisMorrison}). 

Thus it is natural to degenerating Abelian differentials to stable differentials, i.e. compactifying $\HH(\mu)$ in $\BHH$. 
 Denote by $\BHH(\mu)$ the \emph{closure of $\HH(\mu)$ in $\BHH$}. For $(C,\omega) \in \BHH$, let $\wt{C}$ be the \emph{normalization of $C$}. First consider the case when $\omega$ 
has \emph{isolated} zeros and simple poles, i.e. it does not vanish entirely on any irreducible component of $C$. Identify $\omega$ with a stable differential $\wt{\omega}$ on $\wt{C}$. Suppose that 
$$ (\wt{\omega})_0 - (\wt{\omega})_{\infty} = \sum_i a_i z_i + \sum_j (b'_j h'_j + b''_j h''_j) - \sum_k (p'_k + p''_k), $$
where the $z_i$ are the zeros of $\wt{\omega}$ in the smooth locus of $C$, the $h'_j, h''_j$ are the preimages of the node $h_j$ which is not a pole of $\wt{\omega}$, and the $p'_k, p''_{k}$ are the simple poles of $\wt{\omega}$ on the preimages of the node $p_k$. Moreover, $a_i \geq 1$ is the vanishing order of $\wt{\omega}$ at $z_i$, and $b_j', b''_j \geq 0$ are the vanishing orders of $\wt{\omega}$ on $h_j', h_j''$, respectively. Our first result describes which strata closures in $\BHH$ contain such $(C, \omega)$. 

\begin{theorem}
\label{thm:hodge}
In the above setting, we have  
$$(C, \omega) \in \BHH(\cdots, a_i, \cdots, b'_j+1, b''_{j}+1, \cdots). $$
\end{theorem}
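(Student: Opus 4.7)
The plan is to exhibit $(C,\omega)$ as the limit in $\BHH$ of a one-parameter family of smooth Abelian differentials lying in the stratum $\HH(\mu)$, where $\mu = (\ldots,a_i,\ldots,b'_j+1,b''_j+1,\ldots)$.

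I would start with a smoothing $\pi\colon\mathcal{C}\to B$ over a small disk $B$, with $\mathcal{C}_0 = C$ and smooth general fiber. Because $\pi_*\omega_{\mathcal{C}/B}$ is locally free of rank $g$ (the Hodge bundle), any stable differential $\omega\in H^0(C,K)$ extends to $\wt\omega\in H^0(\mathcal{C},\omega_{\mathcal{C}/B})$, and I set $\omega_t = \wt\omega|_{\mathcal{C}_t}$.

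The easy pieces of the argument are the pole nodes and the smooth-locus zeros. At a pole node $p_k$, the opposite residues are absorbed by the local generator $dx/x = -dy/y$ of the relative dualizing sheaf, so $\omega_t$ stays regular and non-vanishing near $p_k$ and no new zeros are introduced there. At each smooth zero $z_i$ one can lift $z_i$ to a section $z_i(t)$ of $\pi$ and choose $\wt\omega$ so that it vanishes to order exactly $a_i$ along this section; then $\omega_t$ has a single zero of order $a_i$ near $z_i$ on every $\mathcal{C}_t$.

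The subtle part is the non-pole nodes. Near such a node in local coordinates with $xy=t$, the most obvious extension $\wt\omega = (x^{b'_j+1}u(x)+y^{b''_j+1}v(y))\,dx/x$ yields $\omega_t$ with $b'_j+b''_j+2$ simple zeros, rather than the desired pair of zeros of orders $b'_j+1$ and $b''_j+1$. To force the correct signature I would instead use a flat-geometric plumbing: choose local flat coordinates $\xi,\eta$ near $h'_j,h''_j$ normalizing $\omega$ as $\xi^{b'_j}d\xi$ and $-\eta^{b''_j}d\eta$, and glue small annular collars around $\xi=0,\eta=0$ by requiring the primitive $\int\omega$ to be matched across the glued region up to an additive constant $t$. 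Here $t$ is a relative period on the resulting smooth surface, and in this plumbing the would-be $b'_j+b''_j+2$ simple zeros coalesce into exactly one zero of order $b'_j+1$ on the "former $h'_j$" side of the glued neck and one zero of order $b''_j+1$ on the "former $h''_j$" side. Combined with the standard cylinder-plumbing at each pole node, this yields a family $(\mathcal{C}_t,\omega_t)\in\HH(\mu)$ converging to $(C,\omega)$ in $\BHH$, which proves $(C,\omega)\in\BHH(\mu)$.

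The main obstacle is verifying that the flat-geometric plumbing is realized by an algebraic deformation in $\BHH$ and that the emerging zeros really have orders $b'_j+1,b''_j+1$. Algebraically this amounts to showing that the plumbing parameter $t$ serves as a coordinate on $\HH(\mu)$ near its boundary (rather than on the larger ambient $\BHH$), and that the resulting family is the image of a holomorphic map $B\to\BHH$ through $(C,\omega)$. This is where the paper's interplay between algebraic geometry and flat geometry is essential: the flat structure dictates the correct plumbing formula, while the algebraicity of the Hodge bundle ensures that this plumbing is realized by a genuine algebraic family of smooth curves.
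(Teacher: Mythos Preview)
Your overall strategy---use flat-geometric plumbing at the nodes to produce nearby smooth flat surfaces in the target stratum---is exactly the paper's approach, and your treatment of the pole nodes (standard cylinder plumbing) matches the paper's second operation. However, there are two places where your write-up diverges from a complete argument.

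First, the detour through an algebraic extension of $\omega$ in the Hodge bundle is unnecessary and, as you yourself observe, does not land in the correct stratum. The paper never attempts this. It works directly with the flat surface $(\wt C,\wt\omega)$ and performs local cut-and-paste surgery near each node; the zeros $z_i$ of order $a_i$ in the smooth locus are simply left alone, so there is nothing to check there. In particular your concern about whether the flat construction is ``realized by an algebraic deformation'' is a red herring: any compact flat surface is automatically a point of $\HH$, and exhibiting a one-parameter family of such surfaces converging to $(C,\omega)$ in $\BHH$ is all that is required.

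Second, and more substantively, your description of the plumbing at a holomorphic node---``glue annular collars by matching the primitive $\int\omega$ up to an additive constant''---is too vague to verify the claimed zero orders $b'_j+1,\,b''_j+1$. The paper supplies the missing concrete construction: on $\wt C$, take two short parallel segments of equal length, one from $h'_j$ to a nearby point and one from $h''_j$ to a nearby point in the reverse direction, slit along them, and re-glue the four edges by translation in the crossed pattern. Each of the two identified endpoints then picks up an extra cone angle of $2\pi$, yielding zeros of orders exactly $b'_j+1$ and $b''_j+1$; shrinking the segment length to zero recovers the node. This explicit slit construction is the step your proposal gestures at but does not pin down.
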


Comparing to the signature of $\wt{\omega}$, the notation $(\cdots, a_i, \cdots, b'_j+1, b''_{j}+1, \cdots)$ means keeping all $a_i$ unchanged, adding one to all $b'_j, b''_j$, and getting rid of all $-1$. We remark that when $\omega$ vanishes on a component of $C$, we prove a similar result (see Corollary~\ref{cor:hodge-component}). 

Despite that $\BHH$ has a nice vector bundle structure, a disadvantage of compactifying $\HH(\mu)$ in $\BHH$ is that sometimes it loses information of the limit positions of the zeros of $\omega$, especially if $\omega$ vanishes on a component of the underlying curve. Alternatively, we can consider degenerations in the \emph{Deligne-Mumford moduli 
space $\BM_{g,n}$ of stable genus $g$ curves with $n$ ordered marked points} by marking the zeros of differentials in 
$\HH(\mu)$.
 
For $\mu = (m_1, \ldots, m_n)$ an \emph{ordered} partition of $2g-2$,  
let $\calP(\mu)\subset \MM_{g,n}$ parameterize pointed stable curves 
$(C, z_1, \ldots, z_n)$, where
$ m_1 z_1 + \cdots + m_n z_n $ is a \emph{canonical divisor} on a smooth curve $C$. 
We say that $\calP(\mu)$ is the \emph{stratum of (holomorphic) canonical divisors with signature $\mu$}. 
If we do not order the zeros, then $\calP(\mu) $ is just the projectivization of $\HH(\mu)$, parameterizing differentials modulo scaling.  
Denote by $\BPP(\mu)$ the \emph{closure of $\calP(\mu)$ in $\BM_{g,n}$}. 

Inspired by the theory of limit linear series \cite{EisenbudHarrisLimit}, we focus on nodal curves of the following type in $\BM_{g,n}$. A nodal curve is of \emph{compact type} if every node is \emph{separating}, i.e. removing it makes the whole curve disconnected. A nodal curve is of 
\emph{pseudocompact type} if every node is either separating or a self-intersection point of an irreducible component. We call a node of the latter type 
a \emph{self-node} or an \emph{internal node}, since both have been used in the literature. Note that curves of compact type 
are special cases of pseudocompact type, where all irreducible components are smooth.  

For the reader to get a feel, let us first consider curves of compact type with only one node. Suppose $(C, z_1, \ldots, z_n)\in \BM_{g,n}$ such that 
$C = C_1 \cup_{q} C_2$, where $C_i$ is smooth and has genus $g_i$, and $q$ is a node connecting $C_1$ and $C_2$. In particular, the marked points $z_j$ are different from $q$. Define
$$M_i = \sum_{z_j\in C_i} m_j $$
as the sum of zero orders in each component of $C$. Our next result determines when the stratum closure $\BPP(\mu)$ 
 in $\BM_{g,n}$ contains such $(C, z_1, \ldots, z_n)$. 

\begin{theorem}
\label{thm:canonical}
In the above setting, 
$(C, z_1, \ldots, z_n)\in \BPP(\mu)$ 
if and only if 
$$ \sum_{z_j\in C_i} m_j z_j + (2g_i - 2 - M_i) q \sim K_{C_i} $$
for $i=1,2$, where $\sim$ stands for linear equivalence. 
\end{theorem}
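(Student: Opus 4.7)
The plan is to handle the two implications separately: necessity via the limit differential and Theorem~\ref{thm:hodge}, sufficiency via an explicit plumbing construction.

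For necessity, assume $(C,z_1,\ldots,z_n)\in\BPP(\mu)$ and take a one-parameter smoothing $\pi\colon\calC\to\Delta$ with $\calC_0=C$, disjoint sections $z_{j,t}$ extending the $z_j$, and a family section $\omega$ of $\omega_{\calC/\Delta}$ whose zero divisor on general fibers is $\sum m_j z_{j,t}$. Restrict $\omega$ to $C$ to obtain a stable differential $\omega_0$. On each $C_i$ the residue theorem applied to a single smooth component forces $\omega_0|_{C_i}$ either to vanish identically or to be holomorphic at $q$, since a lone simple pole on $C_i$ would carry zero residue and hence be removable. If $\omega_0|_{C_i}$ is nonzero on both sides, its zero divisor on $C_i$ accounts for all $m_j z_j$ with $z_j\in C_i$ plus exactly $2g_i-2-M_i$ extra zeros concentrated at $q$, giving the linear equivalence. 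If $\omega_0$ vanishes identically on one side, say $C_1$, rescale the family section by $t^{-k}$ where $k$ is the vanishing order of $\omega$ along $C_1$ as a Cartier divisor in $\calC$; the rescaled section restricts nontrivially to $C_1$ but now acquires a pole of order $k$ at $q$ on the $C_2$-side, because $t=xy$ locally at the node identifies the two branches with $x=0$ and $y=0$. Applying Theorem~\ref{thm:hodge} together with the extension Corollary~\ref{cor:hodge-component} (covering the vanishing case) to both restrictions then yields the divisor $\sum_{z_j\in C_i} m_j z_j+(2g_i-2-M_i)q$ on each $C_i$, establishing the linear equivalences.

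For sufficiency, suppose both linear equivalences hold. Pick meromorphic differentials $\eta_i$ on $C_i$ with $\divisor(\eta_i)=\sum_{z_j\in C_i} m_j z_j+b_i q$, where $b_i:=2g_i-2-M_i$. The identity $b_1+b_2=-2$ is forced by $M_1+M_2=2g-2$, and the residue theorem rules out the symmetric case $b_1=b_2=-1$ (a lone simple pole must be residueless, hence removable, contradicting the divisor), so we may assume $b_1\geq 0$ and $b_2\leq -2$. In this case the residue of $\eta_2$ at $q$ is automatically zero, as $q$ is its only pole. Now construct a smoothing $\calC\to\Delta$ with local model $xy=t$ near $q$; a direct computation in the $(x,t)$-coordinates shows $\eta_1\equiv u_1 x^{b_1}dx$ and $t^{b_1+1}\eta_2\equiv -u_2 x^{b_1}dx$ to leading order, so after rescaling $\eta_2$ by a constant the combination $\omega_t:=\eta_1+t^{b_1+1}\eta_2$ extends to a holomorphic section of $\omega_{\calC/\Delta}$ in a neighborhood of the node. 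For small $t\neq 0$ the zero divisor of $\omega_t$ on $\calC_t$ is $\sum m_j z_{j,t}$, exhibiting $(C,z_1,\ldots,z_n)$ as a point of $\BPP(\mu)$.

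The principal obstacle is in sufficiency: producing a global holomorphic section of $\omega_{\calC/\Delta}$ from the pair $(\eta_1,\eta_2)$, and verifying that no spurious extra zeros appear on the general fiber beyond $\sum m_j z_{j,t}$. The identity $b_1+b_2=-2$ is precisely what makes the leading terms of $\eta_1$ and $t^{b_1+1}\eta_2$ match under $xy=t$, so the plumbing gives a single holomorphic form; one then must check that the higher-order corrections can also be absorbed, which is standard for compact type because the separating node decouples the residue conditions on the two sides. For general pseudocompact-type or arbitrary nodal curves, treated later in the paper, the analogous matching step becomes significantly more delicate due to coupling across self-nodes and cycles of components.
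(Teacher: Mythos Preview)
Your necessity argument is essentially the paper's Proposition~\ref{prop:limit-canonical} recast in the language of rescaling rather than twisting the line bundle; the two are equivalent. However, the appeal to Theorem~\ref{thm:hodge} and Corollary~\ref{cor:hodge-component} is misplaced: those are \emph{smoothing} results (given a stable differential, they produce nearby smooth ones), whereas here you need the converse direction. Once you have a nonzero (meromorphic) differential on each $C_i$ with the prescribed zeros at the $z_j$, the coefficient of $q$ is forced by degree alone, and the linear equivalence is immediate---no further input is needed.

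The real problem is in sufficiency. Your formula $\omega_t=\eta_1+t^{\,b_1+1}\eta_2$ is not well-defined as written: $\eta_1$ and $\eta_2$ live on different components and must first be extended over the family. If you do the local computation honestly in the model $xy=t$ with $\eta_1=x^{b_1}g_1(x)\,dx$ and $\eta_2=y^{b_2}g_2(y)\,dy$, the substitution $y=t/x$ gives $t^{\,b_1+1}\eta_2=-x^{b_1}g_2(t/x)\,dx$; in the simplest case $g_1=g_2\equiv 1$ your sum is identically zero, and in general it equals $x^{b_1}\bigl(g_1(x)-g_2(t/x)\bigr)\,dx$, whose zero locus near the node is not under control. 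So the sentence ``after rescaling $\eta_2$ by a constant the combination extends to a holomorphic section'' does not do what you need, and the acknowledgment that checking for spurious zeros is ``standard'' papers over exactly the hard step. A correct analytic plumbing at a separating node with $b_1+b_2=-2$ can be carried out, but it requires genuine work beyond what you have written.

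The paper avoids this entirely. For sufficiency it argues as follows: the locus of such twisted canonical divisors on one-nodal curves is $\calP_1\times\calP_2$ with $\calP_i$ the relevant (possibly meromorphic) stratum on $C_i$, and one computes $\dim(\calP_1\times\calP_2)=2g-3+n=\dim\BPP(\mu)-1$. Since the boundary divisor has codimension one, any irreducible component $U_1\times U_2$ of $\calP_1\times\calP_2$ that meets $\BPP(\mu)$ at all must lie inside it. The paper then exhibits, for each such component, \emph{one} explicit degeneration using Boissy's flat-geometric picture of a higher-order pole (expand the half-disks around the zero on the holomorphic side to half-planes, and carve out a small polygonal region realizing an element of the polar stratum). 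This dimension-count-plus-one-example strategy sidesteps the need to smooth an arbitrary pair $(\eta_1,\eta_2)$ and to track zeros on the general fiber.
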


For curves of (pseudo)compact type with more nodes, we prove a more general result (see Theorem~\ref{thm:canonical-more} and Remark~\ref{rem:hodge-pseudo}). In general, we remark that for a pointed nodal curve of pseudocompact type to be contained in $\BPP(\mu)$, the linear equivalence condition as above is necessary, but it may fail to be sufficient (see Example~\ref{ex:dimension-bound} and Proposition~\ref{prop:g=2}). 

For curves of non-pseudocompact type, extra complication comes into play when blowing up a non-separating node and inserting chains of rational curves in order to obtain a \emph{regular} smoothing family of the curve. We explain this issue and discuss a possible solution in Section~\ref{subsec:non-compact}. We also treat certain curves of non-pseudocompact type in low genus by an ad hoc method (see Section~\ref{sec:h(4)}). 

A useful idea is to thinking of the pair 
$$\left(\sum_{z_j\in C_1} m_j z_j + (2g_1 - 2 - M_1) q, \sum_{z_j\in C_2} m_j z_j + (2g_2 - 2 - M_2) q\right)$$ 
appearing in Theorem~\ref{thm:canonical} as a \emph{twisted canonical divisor} (see Section~\ref{subsec:twisted}), in the sense that each entry is an ordinary canonical divisor on $C_i$. Note that if $2g_i - 2 - M_i < 0$, then it is not effective, i.e. the corresponding differential on $C_i$ is meromorphic with a pole. In general, we call such $C_i$ a \emph{polar component}. Conversely if on $C_i$ a twisted canonical divisor is effective, we call it 
a \emph{holomorphic component}. 

Therefore, it is nature to enlarge our study by considering \emph{meromorphic differentials} and their degenerations, also for the sake of completeness. Take a sequence 
of integers $\mu = (k_1, \ldots, k_r, - l_1, \ldots, -l_s)$ such that $k_i, l_j > 0$, and 
$$\sum_{i=1}^r k_i - \sum_{j=1}^s l_{j} = 2g-2.$$ 
We still use $\HH(\mu)$ to denote the \emph{stratum of meromorphic differentials with signature $\mu$}, parameterizing meromorphic differentials 
$\omega$ on connected, closed genus $g$ Riemann surfaces $C$ such that 
$$(\omega)_0 - (\omega)_{\infty} = \sum_{i=1}^r k_i z_i - \sum_{j=1}^s l_{j} p_j$$ 
for distinct $z_i, p_j\in C$. We sometimes allow the case $k_i = 0$ by treating $z_i$ as a marked point irrelevant to the differential. Let $\calP(\mu)$ be the corresponding \emph{stratum of meromorphic canonical divisors with signature $\mu$}. As in the case of holomorphic differentials, 
ordering and marking the zeros and poles, we denote by $\BPP(\mu)$ the \emph{closure of $\calP(\mu)$ in $\BM_{g,n}$} with $n = r + s$. As an analogue of Theorem~\ref{thm:canonical}, we have the following result.  

\begin{theorem}
\label{thm:main-meromoprhic-one-node}
Suppose $C = (C_1\cup_q C_2, z_1, \ldots, z_r, p_1, \ldots, p_s) \in \BM_{g,n}$ is a curve of compact type with one node $q$ such that 
$C_i$ has genus $g_i$ and both $C_i$ are polar components. Then $C\in \BPP(k_1, \ldots, k_r, - l_1, \ldots, -l_s)$ if and only if 
$$ \sum_{z_j\in C_i} k_j z_j - \sum_{p_h \in C_i} l_h p_h + (2g_i - 2 - M_i) q \sim K_{C_i} $$
for $i = 1, 2$, where $M_i = \sum_{z_j\in C_i} k_j - \sum_{p_h \in C_i} l_h$. 
\end{theorem}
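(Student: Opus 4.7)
The plan is to mirror the two-step proof of Theorem~\ref{thm:canonical}, with the adjustments needed to track the prescribed poles of the meromorphic differentials. The necessary direction comes from analyzing a smoothing family, while the sufficient direction requires an explicit plumbing construction at the node.

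For necessity, let $\pi : \calC \to \Delta$ be a one-parameter family whose generic fiber $C_t$ carries a differential $\omega_t \in \HH(k_1,\ldots,k_r,-l_1,\ldots,-l_s)$ and whose central fiber is the pointed curve $(C, z_1,\ldots,z_r, p_1,\ldots,p_s)$. Since $C$ is of compact type with a single node, after a finite base change I may assume $\calC$ is regular and that $q$ is cut out locally by $xy=t$. The family $\omega_t$ extends to a meromorphic section of the relative dualizing sheaf with poles along the $p_h$-sections, and after independently rescaling by powers of $t$ on each side of the node, the restrictions $\omega_i = \omega_t|_{C_i}$ are nonzero meromorphic differentials on $C_i$. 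The polar hypothesis on each $C_i$ ensures that $\omega_i$ cannot be driven to zero by the rescaling: a genuine pole is forced either by a marked $p_h \in C_i$ or by $M_i > 2g_i-2$. Writing the divisor of $\omega_i$ as $\sum_{z_j\in C_i} k_j z_j - \sum_{p_h\in C_i} l_h p_h + d_i q$ and matching degrees with any canonical divisor on $C_i$ forces $d_i = 2g_i-2-M_i$, so the divisor is linearly equivalent to $K_{C_i}$.

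For sufficiency, assume the linear equivalence conditions hold. For each $i$, choose a meromorphic differential $\omega_i$ on $C_i$ realizing the twisted canonical divisor; each is unique up to scalar, and I note $d_1+d_2 = -2$ since $M_1+M_2 = 2g-2$ and $g=g_1+g_2$. I would smooth $(C,\omega_1,\omega_2)$ to a family of differentials of signature $\mu$ by a flat-geometric plumbing at $q$. When $d_1=d_2=-1$, I first rescale $\omega_2$ so that $\Res_q \omega_1 + \Res_q \omega_2 = 0$, which is possible because both residues are automatically nonzero at a simple pole; the classical annulus plumbing $xy=t$ then yields a family $(C_t,\omega_t)$ in $\HH(\mu)$ degenerating to $(C,z_*,p_*)$. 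For other splittings of $d_1+d_2=-2$, one instead glues a flat cone point of total angle $2\pi(d_1+1)$ on $(C_1,\omega_1)$ to a higher-order pole on $(C_2,\omega_2)$ in flat coordinates, again producing a smoothing family in the prescribed stratum.

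The main obstacle lies in this last plumbing step when $d_i \neq -1$. One must verify that the local gluing parameter extends to a genuine one-parameter deformation inside $\HH(\mu)$, and that the marked zeros and poles on $C_t$ specialize as $t\to 0$ to the correct branches on $C$. The hypothesis that \emph{both} components are polar is essential at exactly this point: it guarantees that neither $\omega_i$ is absorbed into the other under rescaling, and it is precisely this feature that allows the linear equivalence conditions alone to be sufficient, with no residue matching beyond what is automatic from the uniqueness of the $\omega_i$ up to scalar.
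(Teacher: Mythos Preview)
Your necessity argument is essentially correct and aligns with the paper's (Proposition~\ref{prop:limit-canonical}), but your attribution of the polar hypothesis is misplaced: the twisting argument producing nonzero limits $\omega_i$ on each $C_i$ works for any one-nodal curve of compact type, polar or not. The polar hypothesis plays no role in necessity.

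The sufficiency direction has a genuine gap, which you yourself acknowledge. Your explicit plumbing is complete only when $d_1=d_2=-1$. For the remaining splittings, gluing a zero of order $d_1\geq 0$ on $(C_1,\omega_1)$ to a pole of order $-d_2\geq 2$ on $(C_2,\omega_2)$ is not a purely local matter: the pole of $\omega_2$ at $q$ may carry a nonzero residue (this happens whenever $C_2$ contains marked poles $p_h$, which is allowed here), and the naive expand/shrink construction produces only zero-residue poles. Even when the residue vanishes, the construction used in the proof of Theorem~\ref{thm:canonical} yields only a \emph{possibly special} element of the stratum on the pole side, not the prescribed $(C_2,\omega_2)$; the paper closes that gap in the holomorphic case by a separate dimension/irreducibility argument, which you have not supplied.

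The paper takes an entirely different route for sufficiency in the meromorphic case (Theorem~\ref{thm:twisted-meromorphic}, of which the present statement is the one-node instance). It first proves a determinantal lower bound $\dim B - g$ for every component of the space of twisted meromorphic canonical divisors (Theorem~\ref{thm:scheme-twisted-meromorphic}), then shows that any component attaining this bound exactly must meet the open stratum (Proposition~\ref{prop:meromorphic-smoothing}). The polar hypothesis enters precisely here: because each $C_i$ carries at least one pole, the meromorphic dimension formula gives $\dim\calP_i = 2g_i-3+n_i$ with $n_i$ the number of special points on $C_i$ including $q$, and summing yields exactly $\dim B - g$. Equality then forces smoothability with no explicit construction needed. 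This is the actual role of the polar assumption, and it is what your argument is missing.
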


Again, here we treat the pair $\sum_{z_j\in C_i} k_j z_j - \sum_{p_h \in C_i} l_h p_h + (2g_i - 2 - M_i)q$ as a \emph{twisted meromorphic 
canonical divisor} on $C$. For curves of (pseudo)compact type with more nodes, we prove a more general result for twisted meromorphic canonical divisors 
 (see Theorem~\ref{thm:twisted-meromorphic}). We remark that in both holomorphic and meromorphic cases, the upshot of our proof is to establishing certain dimension bounds for irreducible components of moduli spaces of twisted canonical divisors (see Section~\ref{subsec:dimension}). 

Note that for special signatures $\mu$, $\calP(\mu)$ can be \emph{disconnected}. Kontsevich and Zorich (\cite{KontsevichZorich}) classified connected components for strata of holomorphic differentials. In general, $\calP(\mu)$ may have \emph{up to three} connected components, distinguished by \emph{hyperelliptic, odd} or \emph{even spin structures}. When these components exist, we adapt the same notation as \cite{KontsevichZorich}, using  
``$\hyp$'', ``$\odd$'' and ``$\even$'' to distinguish them. Recently Boissy (\cite{Boissy}) classified connected components for strata of meromorphic differentials, which are similarly distinguished by hyperelliptic and spin structures. Therefore, when $\calP(\mu)$ has more than one connected component, one can naturally ask how to distinguish the boundary points in the closures of its connected components.  

For hyperelliptic components, it is well-known that a degenerate hyperelliptic curve in $\BM_{g}$ can be described explicitly using the theory of \emph{admissible covers} (\cite{HarrisMumford}), by comparing to the moduli space of stable genus zero curves with $2g+2$ marked points, 
where the marked points correspond to the $2g+2$ branch points of a hyperelliptic double cover. In this way we have a good understanding of compactifications of hyperelliptic components. For spin components, the following result distinguishes their boundary points in the locus of curves of pseudocompact type. 

\begin{theorem}
\label{thm:spin}
Let $\calP(\mu)$ be a stratum of holomorphic or meromorphic differentials with signature $\mu$ that possesses two spin components 
$\calP(\mu)^{\odd}$ and $\calP(\mu)^{\even}$. Then $\BPP(\mu)^{\odd}$ and $\BPP(\mu)^{\even}$ are disjoint in the locus of curves of pseudocompact type. 
\end{theorem}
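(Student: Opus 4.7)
The plan is to upgrade the spin parity, originally defined on the smooth stratum $\calP(\mu)$, to a well-defined locally constant function on the locus of pointed stable curves of pseudocompact type inside $\BPP(\mu)$. On $\calP(\mu)$ this function is the Arf invariant $h^0(C, \OO_C(\sum_i (m_i/2) z_i)) \pmod 2$ of the natural theta characteristic cut out by the marked points; by \cite{KontsevichZorich, Boissy} every entry of $\mu$ is even whenever $\calP(\mu)$ carries two spin components. Once such an extension is shown to exist and to be locally constant, its two values will separate $\BPP(\mu)^{\odd}$ and $\BPP(\mu)^{\even}$ on the pseudocompact locus, which is the claim.

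To extend the parity to a pseudocompact-type curve $(C, z_1, \ldots, z_n) \in \BPP(\mu)$, I would first invoke Theorem~\ref{thm:canonical}, together with its generalizations to several nodes (Theorem~\ref{thm:canonical-more}) and to self-nodes (Remark~\ref{rem:hodge-pseudo}), to produce a twisted canonical divisor on $C$: on the normalization $\wt{C}_i$ of each irreducible component a divisor
$$\wt D_i \;=\; \sum_{z_j \in C_i} m_j\, z_j \;+\; \sum_{q} b_q^{\pm}\, q^{\pm}$$
linearly equivalent to $K_{\wt{C}_i}$, where $q^{\pm}$ runs over the preimages on $\wt{C}_i$ of nodes of $C$ incident to $C_i$. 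Because every $m_j$ is even, the coefficients at preimages of separating nodes are automatically even (they extend the formula $2g_i - 2 - M_i$ from Theorem~\ref{thm:canonical}); at a self-node, the same conclusion is forced by combining the fact that $\sum b_q^{\pm}$ has the correct parity from the degree identity on $\wt{C}_i$ with the branch-order description of Theorem~\ref{thm:hodge}. Thus $\wt D_i$ has all even coefficients, and $L_i := \OO_{\wt{C}_i}(\tfrac{1}{2}\wt D_i)$ is a theta characteristic on $\wt{C}_i$.

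Next I would assemble the $L_i$ into a stable spin structure in the sense of Cornalba on the semistable refinement $C'$ of $C$ obtained by inserting an exceptional $\mathbb{P}^1$ at each self-node and placing $\OO(1)$ on it. The required compatibility at each node follows from $\wt D_i \sim K_{\wt{C}_i}$ and the above parity computation. Define the extended parity as $\sum_i h^0(\wt{C}_i, L_i) \pmod 2$, which by Cornalba's formula equals the parity of the associated stable spin curve. For a one-parameter smoothing $(\calC_t, z_i(t))$ in $\calP(\mu)$ with limit $(C, z_i)$, the limit on a semistable filling of the family of theta characteristics $\OO_{\calC_t}(\sum_i (m_i/2)\, z_i(t))$ is precisely the spin line bundle just constructed, because the twisted canonical divisor on $C$ records exactly the limit orders of vanishing of the family's differentials at the nodes. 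Cornalba's theorem that the moduli of stable spin curves is finite over $\BM_{g,n}$ with locally constant parity function then forces the generic and limit parities to agree, completing the argument.

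The main obstacle will be the self-node case: one needs both that each branch order $b_q'$, $b_q''$ at a self-node has the correct parity (not merely the correct sum, which is immediate from a degree count), and that the Cornalba spin structure on the blown-up model $C'$ is independent of the choice of smoothing, so that the extended parity is a function of $(C, z_i)$ alone. Both points rest on matching the coarse linear-equivalence data of the twisted canonical divisor against the finer branch-order description from Theorem~\ref{thm:hodge}; once these are in place, the existence of $L_i$, the Cornalba gluing, and the specialization of parity along a family become formal.
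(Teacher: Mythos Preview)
Your overall strategy is correct and is the same as the paper's: show that the limit theta characteristic on a pseudocompact boundary point is determined by the marked points alone (via the twisted canonical data), so its parity is a well-defined function separating the two closures. For curves of compact type your argument goes through essentially as written.

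The gap is at self-nodes. You want to pass to the normalization $\wt{C}_i$, obtain $\wt D_i \sim K_{\wt{C}_i}$ with all even coefficients, halve to a theta characteristic $L_i$ on $\wt{C}_i$, and glue across an exceptional $\bbP^1$ inserted at each self-node (a second-kind Cornalba spin). But the orders at the two preimages of a self-node are $-1,-1$: the marked zeros and poles lie in the smooth locus of $C$, so on $\wt{C}_i$ the relevant section of $K_{\wt{C}_i}(q'+q'')$ has honest simple poles at $q',q''$. This is precisely the polar-node case $p_k$ of Theorem~\ref{thm:hodge}, not the holomorphic-node case $h_j$ you seem to be invoking. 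Hence the coefficients of $\wt D_i$ at self-node preimages are odd, $\tfrac{1}{2}\wt D_i$ is not an integral divisor, and your $L_i$ is undefined. Note also that blowing up a self-node takes the curve out of the pseudocompact locus (the two new nodes between $\wt{C}_i$ and the exceptional $\bbP^1$ are neither separating nor internal), so the twisted-divisor machinery you rely on does not apply to the blown-up model.

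The paper handles this by doing the opposite: it inserts exceptional components only at the \emph{separating} nodes and leaves self-nodes untouched, working directly on the possibly nodal component $C_i$ with its dualizing bundle $K_{C_i}$. It then runs the direct-image/semicontinuity argument of Proposition~\ref{prop:limit-canonical} with the \emph{theta} characteristic in place of the canonical bundle: twisting the universal $\eta_{\calX}$ by components yields a unique $\eta_\mu$ of the correct multidegree, and one finds $\eta_\mu|_{C_i} = \OO_{C_i}\big(\sum_{z_j\in C_i} k_j z_j - \sum_{p_h\in C_i} l_h p_h\big)$ as a line bundle on the nodal $C_i$. Untwisting gives $\eta_i$, a square root of $K_{C_i}$, i.e.\ a \emph{first-kind} spin structure at each self-node whose gluing over the node is inherited from $\OO_{C_i}$ and hence independent of the smoothing. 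The parity $\sum_i h^0(C_i,\eta_i)\pmod 2$ is then intrinsic to the pointed curve, resolving both obstacles you flag in your final paragraph at once.
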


However, we remark that in the locus of curves of non-pseudocompact type in $\BM_{g,n}$, these components can intersect (see Theorem~\ref{thm:double-conic}). 

For a point $p$ on a genus $g$ Riemann surface $C$, if $h^0(C, gp) \geq 2$, we say that $p$ is a \emph{Weierstrass point}. The study of Weierstrass points has been a rich source for understanding the geometry of Riemann surfaces (see e.g. \cite[Chapter I, Exercises E]{ACGH}). In the context of strata of holomorphic differentials, for example, if $m_1 z_1 + \cdots + m_n z_n$ is a canonical divisor of $C$ such that $m_1 \geq g$, then it is easy to see that $z_1$ is a Weierstrass point. Furthermore, the Weierstrass gap sequences of the unique zero of general differentials in the minimal strata $\calP(2g-2)$ were calculated by Bullock (\cite{Bullock}). Using techniques developed in this paper, we can prove the following result. 

\begin{theorem}
\label{thm:weierstrass}
Let $(C, z_1, \ldots, z_r, p_1, \ldots, p_s)$ be a general curve parameterized in (the non-hyperelliptic components of) 
$\calP(k_1, \ldots, k_r, -l_1, \ldots, -l_s)$. Then $z_i$ is not a Weierstrass point. 
\end{theorem}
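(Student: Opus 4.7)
The strategy is to reformulate the Weierstrass condition as a linear-equivalence condition involving the canonical divisor, then specialize to a compact-type boundary using the paper's smoothability theorems, and close by an inductive argument combined with upper semicontinuity of $h^0$.

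\emph{Reformulation.} The point $z_i$ is a Weierstrass point of $C$ if and only if $h^0(C,\OO_C(g z_i))\ge 2$, equivalently, by Riemann--Roch, $h^0(C,K_C-g z_i)\ge 1$. Combined with $K_C\sim\sum_j k_j z_j-\sum_h l_h p_h$ coming from $(C,\vec z,\vec p)\in\calP(\mu)$, this is equivalent to the existence of an effective divisor $E$ of degree $g-2$ with
\[
(k_i-g)z_i+\sum_{j\ne i}k_j z_j\sim E+\sum_h l_h p_h.
\]
The condition is automatic when $k_i\ge g$ (forcing $z_i$ to be a Weierstrass point); I work in the substantive case $k_i<g$.

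\emph{Degeneration and $h^0$ computation.} I specialize to a compact-type boundary curve $C_0=C_1\cup_q\bbP^1\in\BPP(\mu)$, with $z_i$ on the genus-$g$ component $C_1$ and the rational tail $\bbP^1$ absorbing two other marked points $z_{j_1},z_{j_2}$ with $j_1,j_2\ne i$. By Theorem~\ref{thm:canonical} and its meromorphic generalization Theorem~\ref{thm:twisted-meromorphic}, $C_0\in\BPP(\mu)$ provided the twisted canonical condition holds on each component; on $\bbP^1$ it is automatic, and on $C_1$ it amounts to $(C_1,z_i,\ldots,q)\in\calP(\mu')$, where $\mu'$ is obtained from $\mu$ by replacing the pair $k_{j_1},k_{j_2}$ by the single entry $k_{j_1}+k_{j_2}$ at $q$. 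Extending $\OO_C(g z_i)$ to $\LL=\OO_{C_0}(g z_i)$ via the section $z_i$, the gluing sequence at $q$ together with the triviality of $\LL|_{\bbP^1}$ gives
\[
h^0(C_0,\LL)=h^0(C_1,\OO_{C_1}(g z_i)).
\]

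\emph{Induction and base case.} By upper semicontinuity of $h^0$ in flat families, $h^0(C_1,\OO_{C_1}(g z_i))=1$ implies that the generic smoothing $C_t\in\calP(\mu)^{\nonhyp}$ also has $h^0(C_t,g z_i)=1$, i.e., $z_i$ is not a Weierstrass point on $C_t$. This reduces the theorem for $\mu$ to the same statement for $\mu'$, which has one fewer marked point, setting up an induction on $n=r+s$. The base case is a two-zero stratum $\calP(k_1,k_2)$ with $k_i<g$, where the Weierstrass condition translates into $(g-k_i)z_i+E\sim k_{3-i}z_{3-i}$ with $E$ effective of degree $g-2$; one rules this out by combining Clifford's theorem with the gonality lower bound for non-hyperelliptic curves, together with a direct analysis of the resulting pencil $|k_{3-i}z_{3-i}|$.

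\emph{Main obstacle.} The principal difficulty is to ensure that the constructed degeneration $C_0$ lies in the non-hyperelliptic component $\BPP(\mu)^{\nonhyp}$, rather than in the closure of a hyperelliptic component. Hyperelliptic strata components have a distinct boundary structure via admissible double covers of $\bbP^1$ (Harris--Mumford), and one must select the merging $\{j_1,j_2\}$ so that $C_0$ is incompatible with such a cover; Theorem~\ref{thm:spin} further provides the separation of spin components within the non-hyperelliptic locus. In the meromorphic case $s>0$, the rational tail may also need to absorb poles for the twisted canonical condition to be satisfied, a possibility covered by Theorem~\ref{thm:twisted-meromorphic}.
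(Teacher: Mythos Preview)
Your reduction step---degenerating to a genus-$g$ curve $C_1$ with a rational tail carrying two other marked points, and using semicontinuity of $h^0(gz_i)$---is correct and is exactly the content of the paper's Lemmas~6.1 and~6.3. But this only shifts the problem: after merging, $C_1$ still has genus $g$, and your induction on $n$ terminates at a two-point stratum where nothing has been proved. Two specific problems arise.

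First, your claim that ``$k_i\ge g$ forces $z_i$ to be a Weierstrass point'' is false in the meromorphic case. The divisor $K_C-gz_i\sim (k_i-g)z_i+\sum_{j\ne i}k_jz_j-\sum_h l_h p_h$ has negative coefficients at the poles, so effectivity is not automatic. Indeed, after merging down to $\calP(k,-l)$ one has $k=2g-2+l>g$, yet the theorem asserts (and the paper proves) that $z$ is \emph{not} a Weierstrass point here. So your ``substantive case $k_i<g$'' is not the right dichotomy, and your stated base case $\calP(k_1,k_2)$ with both $k_i<g$ is not what the induction actually lands on in the meromorphic setting.

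Second, even granting a holomorphic base case $\calP(k_1,k_2)$ with $k_i<g$ (which forces $k_1=k_2=g-1$), your sketch ``Clifford plus gonality plus analysis of the pencil $|k_{3-i}z_{3-i}|$'' does not work: a general curve in $\calP(g-1,g-1)$ is \emph{not} a general curve of genus $g$ once $g\ge 4$ (the stratum has dimension $2g<3g-3$), so gonality bounds for general curves are unavailable, and Clifford gives nothing beyond $h^0(gz_i)\le g/2+1$. This base case is precisely where all the content lies, and you have not supplied an argument for it.

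The paper's proof is organized differently. After the same merging reduction to $\calP(k,-l)$, it inducts on the \emph{genus}: attach an elliptic tail $E$ to a general $(X,q,p)\in\calP(k-2,-l)$ of genus $g-1$ at $q$, and place $z$ on $E$ as a $k$-torsion to $q$. By Theorem~4.15 this lies in $\BPP(k,-l)$. The crucial point is that $k=2g-2+l>g$, so one can choose the $k$-torsion $z$ with $gz\not\sim gq$ in $E$; by the standard description of limit Weierstrass points on a curve with an elliptic tail (Harris--Morrison, Theorem~5.45), $z$ is then not a limit Weierstrass point. The base case $g=1$ is trivial. Your degeneration keeps $z_i$ on the genus-$g$ piece and never lowers the genus, so it cannot access this mechanism.
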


We also establish similar results as above in a number of other cases (see Propositions~\ref{prop:weierstrass-holomorphic} and~\ref{prop:weierstrass-meromorphic}). 

This paper is organized as follows. In Section~\ref{sec:preparation}, we introduce basic tools that are necessary to prove our results. In Section~\ref{sec:hodge}, we consider degenerations of Abelian differentials 
in the Hodge bundle $\BHH$ and prove Theorem~\ref{thm:hodge}. In Section~\ref{sec:canonical}, we consider degenerations 
of canonical divisors in $\BM_{g,n}$ and prove Theorems~\ref{thm:canonical} and~\ref{thm:main-meromoprhic-one-node}. In 
Section~\ref{sec:spin}, we consider boundary points of connected components of $\HH(\mu)$ and prove Theorem~\ref{thm:spin}. In Section~\ref{sec:weierstrass}, we study Weierstrass point behavior for general differentials in $\HH(\mu)$ and prove Theorem~\ref{thm:weierstrass}. 
Finally in Section~\ref{sec:h(4)}, we carry out a case study by analyzing the boundary of $\BPP(4)$ in $\BM_{3,1}$ in detail. 

Our techniques combine both algebraic geometry and flat geometry. The interplay between the two fields is a main flavor throughout the paper. For that reason, we will often identify smooth, complex algebraic curves with Riemann surfaces and switch our language back and forth.  

{\bf Acknowledgements.} The author is grateful to Madhav Nori and Anand Patel for many stimulating discussions. The author also wants to thank Matt Bainbridge, Gabriel Bujokas, Izzet Coskun, Alex Eskin, Simion Filip, Sam Grushevsky, Joe Harris, Yongnam Lee, Martin M\"oller, Nicola Tarasca, and Anton Zorich for relevant conversations and their interests in this work. Quentin Gendron informed the author that he has obtained some of the results in Sections~\ref{sec:spin} and~\ref{sec:h(4)} independently (\cite{Gendron}), the methods being in some cases related, in some cases disjoint, and the author thanks him for communications and comments on an earlier draft of this work. Results in this paper were announced at the conference ``Hyperbolicity in Algebraic Geometry'', Ilhabela, January 2015. The author thanks the organizers Sasha Anan'in, Ivan Cheltsov, and Carlos Grossi for their invitation and hospitality.

\section{Preliminaries}
\label{sec:preparation}

In this section, we review basic background material and introduce necessary techniques that will be used later in the paper. 

\subsection{Abelian differentials and translation surfaces}
\label{subsec:translation}

A \emph{translation surface} (also called a \emph{flat surface}) is a closed, topological surface $C$ together with a finite set 
$\Sigma\subset C$ such that: 
\begin{itemize}
\item 
There is an atlas of charts from $C\backslash \Sigma \to \bbC$ with transition functions given by translation. 
\item 
For each $p\in \Sigma$, under the Euclidean metric of $\bbC$ 
 the total angle at $p$ is $(2\pi) \cdot k$ for some $k\in \bbZ^+$. 
\end{itemize}
We say that $p$ is a \emph{saddle point of cone angle} $(2\pi)\cdot k$. 

Equivalently, a translation surface is a closed Riemann surface $C$ with a holomorphic Abelian differential $\omega$, not identically zero: 
\begin{itemize}
\item 
The set of zeros of $\omega$ corresponds to $\Sigma$ in the first definition. 
\item 
If $p$ is a zero of $\omega$ of order $m$, then the cone angle at $p$ is $ (2\pi)\cdot (m+1) $. 
\end{itemize}

Let us briefly explain the equivalence between translation surfaces and Abelian differentials. Given a translation surface, away from its saddle points differentiating the local coordinates yields a globally defined holomorphic differential. Conversely, integrating an Abelian differential away from its zeros provides an atlas of charts with transition functions given by translation. Moreover, a saddle point $p$ has cone angle $(2\pi)\cdot (m+1)$ if and only if locally $\omega = d(z^{m+1})\sim z^m dz$ for a suitable coordinate $z$, hence if and only if $p$ is a zero of $\omega$ of order $m$. We refer to \cite{Zorich} for a comprehensive introduction to translation surfaces. 

\subsection{Strata of Abelian differentials and canonical divisors}
\label{subsec:strata}

Take a sequence of positive integers $\mu = (m_1, \ldots, m_n)$ such that $\sum_{i=1}^n m_i = 2g-2$. We say that 
$\mu$ is a \emph{partition} of $2g-2$. Define 
$$ \HH(\mu) = \Big\{ (C, \omega) \mid C \ \mbox{is a closed, connected Riemann surface of genus}\ g, $$
$$\omega \ \mbox{is an Abelian differential on}\ C \ \mbox{such that}\ (\omega)_0 = m_1 p_1 + \cdots + m_n p_n \Big\}. $$
We say that $\HH(\mu)$ is the \emph{stratum of (holomorphic) Abelian differentials with signature $\mu$}. 
Using the description in Section~\ref{subsec:translation}, equivalently $\HH(\mu)$ parameterizes translation surfaces with $n$ saddle points, each having cone angle 
$(m_i+1) \cdot (2\pi)$. By using relative period coordinates (see e.g. \cite[Section 3.3]{Zorich}), $\HH(\mu)$ can be regarded as a complex orbifold  
of dimension 
$$\dim_{\bbC} \HH(\mu) = 2g + n -1, $$
where $n$ is the number of entries in $\mu$. 

For special partitions $\mu$, $\HH(\mu)$ can be \emph{disconnected}. Kontsevich and Zorich 
(\cite[Theorems 1 and 2]{KontsevichZorich}) classified connected components of $\HH(\mu)$ for all $\mu$. 
If a translation surface $(C, \omega)$ has $C$ being hyperelliptic, $(\omega)_0 = (2g-2)z$ or $(\omega)_0 = (g-1)(z_1+z_2)$, where 
$z$ is a Weierstrass point of $C$ in the former or $z_1$ and $z_2$ are conjugate under the hyperelliptic involution of $C$ in the latter, we say that $(C, \omega)$ is a \emph{hyperelliptic translation surface}. Note that being a hyperelliptic translation surface not only requires $C$ to be hyperelliptic, but also imposes extra conditions to $\omega$ (see \cite[Definition 2 and Remark 3]{KontsevichZorich}).  

In addition, for a nonhyperelliptic translation surface $(C, \omega)$, if 
$(\omega)_0 = 2 k_1 z_1 + \cdots + 2k_n z_n$, then the line bundle 
$$\OO_C\left(\sum_{i=1}^n k_i z_i\right)$$ 
is a square root of $K_C$, which is called a \emph{theta characteristic}. Such a theta characteristic along with its \emph{parity}, i.e. 
$$h^0\left(C, \sum_{i=1}^n k_i z_i\right) \pmod{2}$$ 
is called a \emph{spin structure}. In general, $\HH(\mu)$ may have \emph{up to three} connected components, distinguished by possible hyperelliptic and spin structures. 

Note that two Abelian differentials are multiples of each other if and only if their associated zero divisors are the same. 
Therefore, it makes sense to define the \emph{stratum of canonical divisors with signature $\mu$} in $\MM_{g,n}$, denoted by 
$\calP(\mu)$, parameterizing $(C, z_1, \ldots, z_n)$ such that $\sum_{i=1}^n m_i z_i$ is a canonical divisor in $C$. Here we choose to \emph{order} the zeros only for the convenience of stating related results. Alternatively if one considers the corresponding stratum of canonical divisors without ordering the zeros, it is just the projectivization of $\HH(\mu)$. In particular, 
$$ \dim_{\bbC} \calP(\mu) = \dim_{\bbC} \HH(\mu)-1 = 2g + n -2. $$

\subsection{Meromorphic differentials and translation surfaces with poles}
\label{subsec:pole}

One can also consider the flat geometry associated to meromorphic differentials on Riemann surfaces. In this case we obtain 
flat surfaces with infinite area, called \emph{translation surfaces with poles}. 

For $k_1, \ldots, k_r, l_1, \ldots, l_s \in \bbZ^{+}$ 
such that $\sum_{i=1}^r k_i - \sum_{j=1}^s l_j = 2g-2$, denote by 
$$\HH(k_1, \ldots, k_r, -l_1, \ldots, -l_s)$$ the 
\emph{stratum of meromorphic differentials} parameterizing $(C, \omega)$, where $\omega$ is a meromorphic differential on a closed, connected genus $g$ Riemann surface $C$ such that $\omega$ has zeros of order $k_1, \ldots, k_r$ and poles of order 
$l_1, \ldots, l_s$, respectively. The dimension and connected components of $\HH(k_1, \ldots, k_r, -l_1, \ldots, -l_s)$ have been determined 
by Bossy (\cite[Theorems 1.1, 1.2 and Lemma 3.5]{Boissy}), using an \emph{infinite zippered rectangle construction}. In particular, 
if $s > 0$, i.e. if there is at least one pole, then 
$$ \dim_{\bbC}\HH(k_1, \ldots, k_r, -l_1, \ldots, -l_s) = 2g - 2 + r + s. $$
If we consider meromorphic differentials modulo scaling, i.e. \emph{meromorphic canonical divisors}, then the corresponding stratum has dimension  
$$ \dim_{\bbC}\calP(k_1, \ldots, k_r, -l_1, \ldots, -l_s) = 2g - 3 + r + s. $$
As in the case of holomorphic Abelian differentials,  
$\HH(k_1, \ldots, k_r, -l_1, \ldots, -l_s)$ can be disconnected due to possible hyperelliptic and spin structures (\cite[Section 5]{Boissy}), but all connected components of a stratum have the same dimension. 

A special case is when $\omega$ has a simple pole at $p$. Under flat geometry, the local neighborhood of $p$ can be visualized as a \emph{half-infinite cylinder} (see \cite[Figure 3]{Boissy}). The \emph{width} of the cylinder corresponds to the \emph{residue} of $\omega$ at $p$. 

For a pole of order $m \geq 2$, one can glue $2m-2$ \emph{basic domains} appropriately to form a flat-geometric presentation (see \cite[Section 3.3]{Boissy}). Each basic domain is a ``broken half-plane'' whose boundary consists of a half-line to the left and a paralell half-line to the right, connected by finitely many broken line segments. In particular, the \emph{residue} of a pole can be read off from the complex lengths of the broken line segments and the gluing pattern. 

For example, for $k \geq 0$ the differential $z^k dz$ gives a zero of order $k$, so locally one can glue $2k+2$ half-disks consecutively to form a cone of angle $2\pi \cdot (k+1)$, see Figure~\ref{fig:zero-k}. 
\begin{figure}[h]
    \centering
    \psfrag{B1}{$B_1$}
    \psfrag{B2}{$B_2$}
    \psfrag{Bk+1}{$B_{k+1}$}
    \psfrag{A1}{$A_1$}
    \psfrag{A2}{$A_2$}
    \psfrag{A3}{$A_3$}
    \psfrag{Ak+1}{$A_{k+1}$}   
    \includegraphics[scale=1.2]{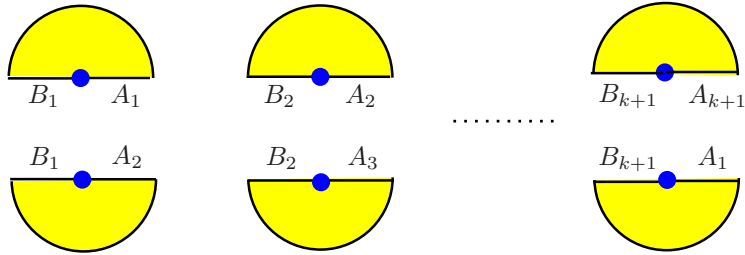}
    \caption{\label{fig:zero-k} A zero of order $k$}
    \end{figure}
    
Now let $w = 1/z$, and the differential with respect to $w$ has a pole of degree $k+2$ with \emph{zero} residue. In terms of the flat-geometric language, the $2k+2$ half-disks transform to $2k+2$ half-planes (with the disks removed), where the newborn left and right half-line boundaries are identified in pairs by the same gluing pattern, see Figure~\ref{fig:pole-k}. 
\begin{figure}[h]
    \centering
    \psfrag{Bi}{$B_i$}
    \psfrag{Ai}{$A_i$}
    \psfrag{Aj}{$A_{i+1}$}   
    \includegraphics[scale=0.8]{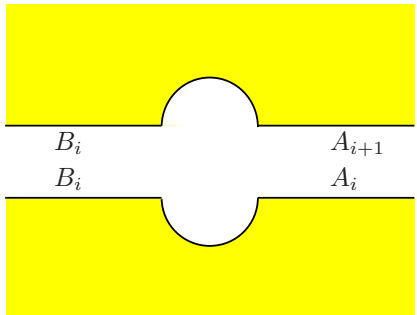}
    \caption{\label{fig:pole-k} Half-planes transformed from half-disks}
    \end{figure}
    
Furthermore, varying the positions of the half-line boundaries with suitable rotating and scaling can produce poles of order $k+2$ with arbitrary nonzero residues (see \cite[Section 2.2]{Boissy}). 

\subsection{Deligne-Mumford stable curves and stable one-forms}
\label{subsec:stable}

Let $\BM_{g,n}$ be the \emph{Deligne-Mumford moduli space of stable nodal genus $g$ curves with $n$ ordered marked points 
$(C, p_1, \ldots, p_n)$}. The stability condition means that $\Aut (C, p_1, \ldots, p_n)$ is finite, or equivalently, the normalization of every rational component of $C$ contains at least three special points (preimages of a node or marked points). For 
$S\subset \{1,\ldots,n \}$, denote by $\Delta_{i; S}$ the boundary component of $\BM_{g,n}$ whose general point parameterizes two smooth curves of genus $i$ and $g-i$, respectively, glued at a node such that the genus $i$ component only contains 
the marked points labeled by $S$ in the smooth locus. For $i = 0$ (resp. $i = g$), we require that $|S| \geq 2$ (resp. $|S|\leq n-2$) to fulfill the stability condition. The codimension of $\Delta_{i; S}$ in $\BM_{g,n}$ is one, so we call it a 
\emph{boundary divisor}. 

The \emph{Hodge bundle} $\BHH$ is a rank $g$ vector bundle on $\BM_{g}$ (in the orbifold sense). Formally it is defined as 
$$ \BHH := \pi_{*} \omega_{\calC/\BM_g}, $$
where $\pi: \calC\to \BM_g$ is the universal curve and $\omega_{\calC/\BM_g}$ is the \emph{relative dualizing line bundle} of $\pi$. Geometrically speaking, the fiber of $\BHH$ over $C$ is $H^0(C, K)$, where $K$ is the \emph{dualizing line bundle} of $C$. If $C$ is nodal, then $H^0(C, K)$ 
can be identified with the space of stable differentials on the normalization $\wt{C}$ of $C$. A \emph{stable differential} 
$\wt{\omega}$ on $\wt{C}$ is a meromorphic differential that is holomorphic away from preimages of nodes of $C$ and has at worst simple pole at the preimages of a node, with residues on the two branches of a polar node adding to zero (see e.g. \cite[Chapter 3.A]{HarrisMorrison}).

\subsection{Admissible covers}
\label{subsec:admissible}

Harris and Mumford (\cite{HarrisMumford}) developed the theory of admissible covers to deal with degenerations of branched covers of smooth curves to covers of nodal curves. Let $f: C\to D$ be a finite morphism of nodal curves satisfying the following conditions: 
\begin{itemize}
\item 
$f$ maps the smooth locus of $C$ to the smooth locus of $D$ and maps the nodes of $C$ to the nodes of $D$. 
\item 
Suppose $f(p) = q$ for a node $p\in C$ and a node $q\in D$. Then there exist suitable local coordinates 
$x, y$ for the two branches at $p$, and local coordinates  
$u, v$ for the two branches at $q$, such that 
$$u = f(x) = x^m, \quad v = f(y) = y^m$$ 
for some $m \in \bbZ^{+}$, see Figure~\ref{fig:cover-local}. 
\begin{figure}[h]
    \centering
    \psfrag{D}{$D$}
    \psfrag{C}{$C$}
    \psfrag{p}{$p$}
       \psfrag{q}{$q$}
        \psfrag{m}{$m$ sheets} 
    \includegraphics[scale=0.8]{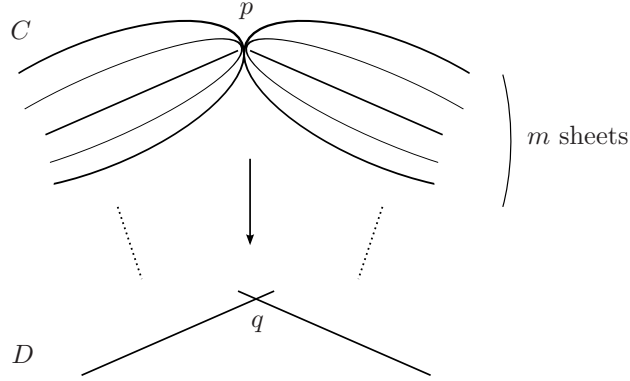}
    \caption{\label{fig:cover-local} An admissible cover with a node of order $m$}
    \end{figure}  
\end{itemize} 
We say that such a map $f$ is an \emph{admissible cover}. 
The reader can refer to \cite[Chapter 3.G]{HarrisMorrison} for a comprehensive introduction to admissible covers. In this paper we will only use admissible \emph{double} covers of rational curves as degenerations of hyperelliptic coverings of $\bbP^1$. In particular, the closure of the locus of hyperelliptic curves in $\BM_g$ is isomorphic to the moduli space $\BM_{0,2g+2}/\mathfrak{S}_{2g+2}$ of stable genus zero curves 
with $2g+2$ \emph{unordered} marked points. 

\subsection{Limit linear series}
\label{subsec:lls}

A \emph{linear series} $g^r_d$ on a smooth curve $C$ consists of a degree $d$ line bundle $L$ with a subspace 
$V\subset H^0(C, L)$ such that $\dim V = r+1$. For a point $z\in C$, take a basis $\sigma_0, \ldots, \sigma_r$ of $V$ such that 
the vanishing orders $a_i = \ord_z(\sigma_i)$ are strictly increasing. We say that $0\leq a_0< \cdots < a_r$ is the \emph{vanishing sequence} of $(L, V)$ at $z$, which is apparently independent of the choices of a basis. Set $\alpha_i = a_i - i$. The sequence $0\leq \alpha_0 \leq \cdots \leq \alpha_r$ is called the \emph{ramification sequence} of $(L, V)$. 

Now consider a nodal curve $C$. Recall that if removing any node makes the whole curve disconnected, $C$ is called of \emph{compact type}. Equivalently, a nodal curve is of compact type if and only if its Jacobian is compact, which is then isomorphic to the product of Jacobians of its connected components. One more equivalent definition uses the \emph{dual graph} of a nodal curve, whose vertices correspond to components of the curve and 
 two vertices are linked by an edge if and only if the corresponding two components intersect at a node. It is easy to see that a curve is of compact type if and only if its dual graph is a tree. 

Eisenbud and Harris (\cite{EisenbudHarrisLimit}) established a theory of \emph{limit linear series} as a powerful tool to study degenerations of linear series from smooth curves to curves of compact type. If $C$ is a curve of compact type with irreducible components $C_1, \ldots, C_k$, 
a (refined) limit linear $g^r_d$ is a collection of ordinary $g^r_d$'s $(L_i, V_i)$ on each $C_i$ such that if $C_i$ and $C_j$ intersect at a node $q$ and if $(a_0, \ldots, a_r)$ and $(b_0, \ldots, b_r)$ are the vanishing sequences of $(L_i, V_i)$ and $(L_j, V_j)$ at $q$, respectively, then 
$a_l + b_{r-l} = d$ for all $l$. 

Eisenbud and Harris showed that if a family of $g^r_d$'s on smooth curves degenerate to a curve of compact type, then the limit object is a limit linear $g^r_d$. Furthermore, they constructed a limit linear series moduli scheme $G^r_d$ that is compatible with imposing ramification conditions 
to points in the smooth locus of a curve, came up with a lower bound for any irreducible component of $G^r_d$, and used it to study smoothability of limit linear series. They also remarked that the method works for a larger class of curves, called \emph{tree-like curves}, which we call of \emph{pseudocompact type} in our context. Recall that a curve is of pseudocompact type, if every node is either separating or a self-node, i.e. arising from the self-intersection of an irreducible component of the curve. Equivalently, a curve is of pseudocompact type if any closed path in its dual graph is a loop connecting a vertex to itself. 

We want to apply limit linear series to the situation when canonical divisors with $n$ distinct zeros with prescribed vanishing orders
degenerate in the Deligne-Mumford moduli space $\BM_{g,n}$. In this context we need to treat the case of limit canonical series $g^{g-1}_{2g-2}$, because on a smooth genus $g$ curve a $g^{g-1}_{2g-2}$ is uniquely given by the canonical line bundle along with the space of holomorphic Abelian differentials. We illustrate its application in some cases (see Example~\ref{ex:dimension-bound} and Proposition~\ref{prop:g=2}). Nevertheless, in general our situation is slightly different, since an element in a stratum of differentials is a single section of the canonical line bundle, not the whole space of sections. In principle keeping track of degenerations of $g^{g-1}_{2g-2}$ along with a special section could provide finer information, but in practice it seems complicated to work with. Instead, in Section~\ref{subsec:twisted} we introduce the notion of \emph{twisted canonical divisors} that play the role of ``limit canonical divisors'' on curves of pseudocompact type. We also discuss a possible extension of twisted canonical divisors to curves of non-pseudocompact type in Section~\ref{subsec:non-compact}. 

\subsection{Moduli of spin structures}
\label{subsec:spin}

Recall that a \emph{theta characteristic} is a line bundle $L$ on a smooth curve $C$ such that $L^{\otimes 2} = K_C$, i.e. $L$ is a square root of the canonical line bundle. A theta characteristic is also called a \emph{spin structure}, whose \emph{parity} is given by $h^0(C, L) \pmod{2}$. In particular, a spin structure is either \emph{even} or \emph{odd}, and the parity is deformation invariant (see \cite{Atiyah, Mumford}). Cornalba (\cite{Cornalba}) constructed a \emph{compactified moduli space of spin curves} $\BSS_g = \BSS_g^{+}\sqcup \BSS_g^{-}$ over $\BM_g$, which defines limit spin structures and further distinguishes odd and even parities. 

Let us first consider spin structures on curves of compact type. Take a nodal curve $C$ with two smooth components $C_1$ and $C_2$ union at a node $q$. Blow up $q$ to insert a $\bbP^1$ between $C_1$ and $C_2$ with new nodes $q_i= C_i\cap \bbP^1$ for $i=1,2$. Such $\bbP^1$ is called 
an \emph{exceptional component}. 
Then a spin structure $\eta$ on $C$ consists of the data 
$$ (\eta_1, \eta_2, \OO(1)), $$ 
where $\eta_i$ is an ordinary theta characteristic on $C_i$ and $\OO(1)$ is a line bundle of degree one on the exceptional component. Note that the total degree 
of $\eta$ is 
$$(g_1 - 1) + (g_2 - 1) + 1 = g-1,$$
which remains to be one half of the degree of $K_C$. 
Since $h^0(\bbP^1, \OO(1)) = 2$, the parity of $\eta$ is determined by 
$$h^0(C_1, \eta_1) + h^0(C_2, \eta_2) \pmod{2}.$$ 
In other words, $\eta$ is even (resp. odd) if and only if $\eta_1$ and $\eta_2$ have the same (resp. opposite) parity. If there is no confusion, we will simply drop the exceptional component and treat $(\eta_1, \eta_2)$ as a limit theta characteristic. The same description works for spin structures on a curve of compact type with more nodes, by inserting an exceptional $\bbP^1$ between any two adjacent components, and the parity is determined by the sum of the parities on each non-exceptional component. 

If $C$ is a nodal curve of non-compact type, say, by identifying $q_1, q_2 \in \wt{C}$ to form a non-separating node $q$, there are \emph{two} kinds of spin structures on $C$. The \emph{first kinds} are just square roots of $K_{C}$, which can be obtained as follows. Take a line bundle $L$ on $\wt{C}$ such that $L^{\otimes 2} \cong \wt{C}(q_1 + q_2)$. For each parity, there is precisely one way to identify the fibers of $L$ over $q_1$ and $q_2$, such that it descends to a square root of $K_C$ with the desired parity. The \emph{second kinds} are obtained by blowing up $q$ to insert a $\bbP^1$ attached to $\wt{C}$ at $q_1$ and $q_2$, and suitably gluing a theta characteristic $L$ on $\wt{C}$ to $\OO(1)$ on the exceptional component. In this case the parity is the same as that of $L$. 

\section{Degenerations in the Hodge bundle}
\label{sec:hodge}

In this section we consider degenerations of holomorphic Abelian differentials in the Hodge bundle $\BHH$ over $\BM_g$. Let us first prove Theorem~\ref{thm:hodge}. Recall that $\wt{C}$ is the normalization of $C$. Identify $\omega$ with a stable differential $\wt{\omega}$ on $\wt{C}$ satisfying that 
$$ (\wt{\omega})_0 - (\wt{\omega})_{\infty} = \sum_i a_i z_i + \sum_j (b'_j h'_j + b''_j h''_j) - \sum_k (p'_k + p''_k), $$
where the $z_i$ are the zeros of $\wt{\omega}$ in the smooth locus of $C$, the $h'_j, h''_j$ are the preimages of the node $h_j$ which is not a pole of $\wt{\omega}$, and the $p'_k, p''_{k}$ are the simple poles of $\wt{\omega}$ on the preimages of the node $p_k$, see Figure~\ref{fig:nodal-zhp}.  
 \begin{figure}[h]
    \centering
    \psfrag{z}{$z$}
    \psfrag{p}{$p$}
    \psfrag{h}{$h$}
    \includegraphics[scale=0.8]{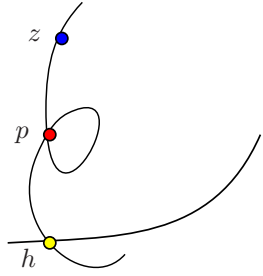}
    \caption{\label{fig:nodal-zhp} A nodal curve with zeros and holomorphic and polar nodes}
    \end{figure}

Moreover, $a_i \geq 1$ is the vanishing order of $\wt{\omega}$ at $z_i$, and $b_j', b''_j \geq 0$ are the vanishing orders of $\wt{\omega}$ on $h_j', h_j''$, respectively. Then Theorem~\ref{thm:hodge} states that $(C, \omega)$ is contained in the closure of $\HH(\cdots, a_i, \cdots, b'_j+1, b''_{j}+1, \cdots)$ 
in the Hodge bundle over $\BM_g$. 

\begin{proof}[Proof of Theorem~\ref{thm:hodge}]
We will carry out two local operations. First, we need to smooth out a holomorphic node $h$ with zero order $b'$ and $b''$ on the two branches of $h$ 
 to two smooth points of zero order $b'+1$ and $b''+1$, respectively.  Secondly, we need to smooth out simple poles. 

Let us describe the first operation. Recall the notation that the preimages of $h$ in the normalization $\wt{C}$ 
 are $h'$ and $h''$. In $\wt{C}$, take two sufficiently small parallel intervals of equal length to connect $h'$ to a nearby point $q''$ and connect $h''$ to a nearby point $q'$ in reverse directions, cut along the intervals, and finally identify the edges by translation as in Figure~\ref{fig:interval}. 
 \begin{figure}[h]
    \centering
    \psfrag{a}{$h'$}
    \psfrag{d}{$h''$}
    \psfrag{b}{$q'$}
    \psfrag{c}{$q''$}
    \psfrag{A}{$A$}
    \psfrag{B}{$B$}   
    \includegraphics[scale=0.15]{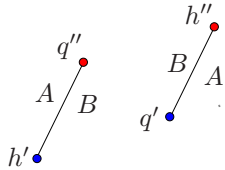}
    \caption{\label{fig:interval} Two parallel interval slits}
    \end{figure}
    
Locally we obtain two new zeros $h' = q'$ and $h'' = q''$ of order $b'+1$ and $b''+1$, respectively. The zero orders increase by one 
for each, because the cone angles at $q'$ and at $q''$ are both $2\pi$, so after this operation the new cone angle at each zero gain an extra $2\pi$. In particular, 
as long as the interval is small enough but nonzero, it gives rise to a differential on a genus $g$ Riemann surface, which preserves the other zero and pole orders of $\wt{\omega}$.  Now shrinking the interval to a point, this operation amounts to identifying $h'$ and $h''$, thus recovering the 
stable differential $(C, \omega)$. 

Next,  let $p$ be a simple pole with preimages $p'$ and $p''$ in $\wt{C}$. As mentioned in Section~\ref{subsec:pole}, the local flat geometry of 
$\wt{\omega}$ at $p'$ and $p''$ is presented by two half-infinite cylinders 
 with $p' = +\infty$ and $p'' = -\infty$, see Figure~\ref{fig:half-cylinder}. 
 \begin{figure}[h]
    \centering
    \psfrag{a}{$a$}
    \psfrag{b}{$b$}
    \psfrag{p}{$p'$}
    \psfrag{q}{$p''$}
    \includegraphics[scale=0.2]{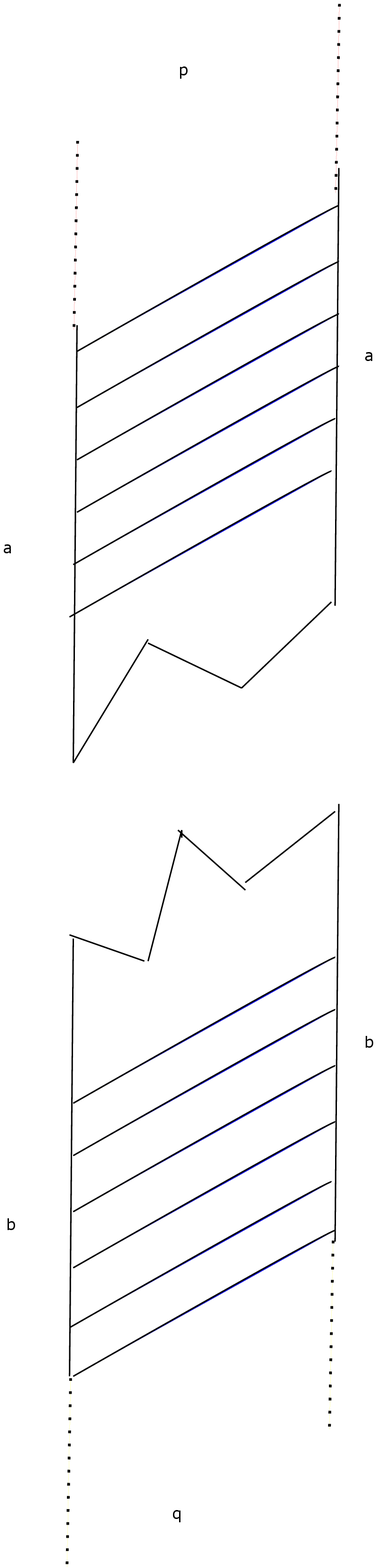}
    \caption{\label{fig:half-cylinder} Half-infinite cylinders around simple poles}
    \end{figure}

The condition $\Res_{p'}(\wt{\omega}) + \Res_{p''}(\wt{\omega}) = 0$ implies that both cylinders have the same width (in opposite direction). Truncate the half-infinite cylinders by two parallel vectors (given by the residues) and identify the top and bottom by translation as in Figure~\ref{fig:plumbing}. 
 \begin{figure}[h]
    \centering
    \psfrag{a}{$a$}
    \psfrag{b}{$b$}
    \psfrag{c}{$c$}
    \includegraphics[scale=0.2]{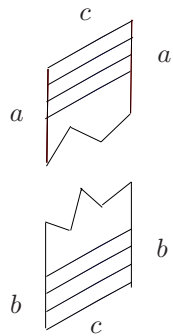}
    \caption{\label{fig:plumbing} Local view of plumbing a cylinder}
\end{figure}

The cylinders become of finite length, i.e., locally the simple pole disappears. This operation is called \emph{plumbing a cylinder} in the literature, see Figure~\ref{fig:plumbing-global}. 
 \begin{figure}[h]
    \centering
    \includegraphics[scale=0.5]{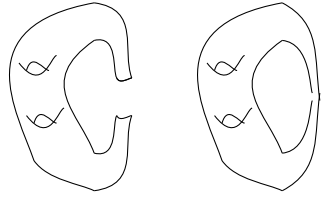}
    \caption{\label{fig:plumbing-global} Global view of plumbing a cylinder}
\end{figure}
In particular, the plumbing operation does not produce any new zeros nor poles. Conversely, extending the two finite cylinders to infinity on both ends, we recover the pair of simple poles. The reader can refer to \cite[Section 6.3]{Wolpert} for an explicit example of analytically plumbing an Abelian differential at a simple pole.

Now carrying out the two operations locally for all holomorphic nodes and simple poles one by one, we thus conclude that 
the stable differential $(C, \omega)$ can be realized as a degeneration of holomorphic differentials in the desired stratum. 
\end{proof}

\begin{corollary}
\label{cor:node-pole}
Let $\omega \in H^0(C, K)$ on a nodal curve $C$. If $(\omega)_0 = m_1 z_1 + \cdots + m_n z_n$ such that every $z_i$ is in the smooth locus of $C$, then 
$$(C, \omega) \in \BHH(m_1, \ldots, m_n).$$  
\end{corollary}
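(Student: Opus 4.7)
The plan is to reduce the corollary directly to Theorem~\ref{thm:hodge} by first observing that the hypothesis on the zero divisor $(\omega)_0$ forces every node of $C$ to be a polar node of $\omega$. Since $C$ is Gorenstein, its dualizing sheaf $K_C$ is a line bundle even at nodes, so $(\omega)_0$ is a well-defined Cartier divisor on $C$, and the claim about ``zeros in the smooth locus'' can be checked locally.

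First, I would analyze $\omega$ near each node $h\in C$. With local equation $xy = 0$, the sheaf $K_C$ is locally generated by the section $\eta$ that restricts to $dx/x$ on the $x$-branch and to $-dy/y$ on the $y$-branch, so any local section can be written as $\omega = F\cdot \eta$ for some $F\in \OO_{C,h}$. There are two cases: if $F(h)\neq 0$, then $F$ is a unit and $\omega$ inherits the simple poles of $\eta$ on both branches, so $h$ is a polar node and contributes nothing to $(\omega)_0$; if $F(h)=0$, then the pole of $\eta$ is cancelled, $\omega$ is regular on each branch, and $h$ appears as a point of the zero divisor $(\omega)_0$ (equivalently, in the notation of Theorem~\ref{thm:hodge}, we are in the holomorphic-node case with $b',b''\geq 0$).

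Next, I would combine this with the hypothesis that every $z_i$ lies in the smooth locus of $C$. Since $(\omega)_0$ is supported away from all nodes, the second case above is excluded, and every node of $C$ is a polar node. In the decomposition
$$(\wt\omega)_0 - (\wt\omega)_{\infty} = \sum_{i} a_i z_i + \sum_j (b'_j h'_j + b''_j h''_j) - \sum_k (p'_k + p''_k)$$
that appears in Theorem~\ref{thm:hodge}, the middle sum is therefore empty: the $z_i$ are precisely the $n$ given zeros with $a_i = m_i$, and all poles of $\wt\omega$ are simple poles at preimages of polar nodes. Theorem~\ref{thm:hodge} then gives $(C,\omega)\in \BHH(m_1, \ldots, m_n)$ with no ``$b'_j+1, b''_j+1$'' entries to account for.

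The only (minor) obstacle is the local computation identifying a holomorphic node of $\omega$ with a point of $(\omega)_0$; once this dictionary between the section-theoretic and flat-geometric viewpoints at a Gorenstein node is in place, no further smoothing argument is needed beyond the cylinder-plumbing operation already carried out in the proof of Theorem~\ref{thm:hodge}.
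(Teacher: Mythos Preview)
Your proposal is correct and follows essentially the same route as the paper: verify that the hypothesis on $(\omega)_0$ forces every node of $C$ to be a polar node, so that the holomorphic-node terms $b'_j,b''_j$ in Theorem~\ref{thm:hodge} are absent, and then invoke that theorem directly. The paper simply asserts ``by assumption, $\wt{\omega}$ has simple poles at preimages of each node'' where you spell out the local computation with the generator $dx/x$; the paper also inserts a side remark that $C$ can have no separating nodes (via the base-point of $K_X(q)$), which is an interesting consequence of the hypothesis but is not actually used in the argument.
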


\begin{proof}
We first remark that by assumption $C$ cannot have separating nodes. Otherwise if $X$ was a connected component of $C$ separated by such a node $q$, 
since the restriction of $K_C$ to $X$ is $K_X(q)$, it would have a base point at $q$, contradicting that $\omega$ has no zero in the nodal locus of $C$. Now let us proceed with the proof of the corollary. Identify $\omega$ with a stable differential $\wt{\omega}$ on the normalization of $C$. 
By assumption, $\wt{\omega}$ has simple poles at preimages of each node of $C$, and hence there is no holomorphic node. The desired result thus follows as a special case of Theorem~\ref{thm:hodge}.  
\end{proof}

Nori (\cite{Nori}) informed the author that the above corollary can also be proved by studying first-order deformations of such $(C, \omega)$. 

\begin{example}
Let $C$ consist of two elliptic curves $C'$ and $C''$ meeting at two nodes $p_1$ and $p_2$. Let $p'_i \in C'$ and 
$p''_i\in C''$ be the preimages of $p_i$ in the normalization of $C$. Let $\omega$ be a section of 
$K_C$ such that $\wt{\omega}|_{C'}$ has a double zero at a smooth point $z_1$ and two simple poles at $p'_1, p'_2$ and 
$\wt{\omega}|_{C''}$ has a double zero at a smooth point $z_2$ and two simple poles at $p''_1, p''_2$, see Figure~\ref{fig:banana-ee}. 

 \begin{figure}[h]
    \centering
    \psfrag{z1}{$z_1$}
    \psfrag{z2}{$z_2$}
    \psfrag{p1}{$p_1$}
    \psfrag{p2}{$p_2$}
     \psfrag{C'}{$C'$}
    \psfrag{C''}{$C''$}
    \includegraphics[scale=0.8]{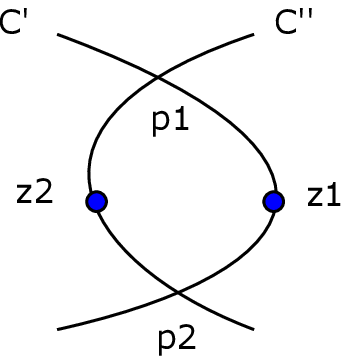}
    \caption{\label{fig:banana-ee} A curve in $\BHH(2,2)^{\odd}$}
    \end{figure}

In other words, 
$2z_1 \sim p'_1 + p'_2$ in $C'$ and $2z_2 \sim p''_1 + p''_2$ in $C''$ with the residue condition $\Res_{p'_i}(\wt{\omega}) +\Res_{p''_i}(\wt{\omega}) =0$ for $i=1, 2$. It follows from Theorem~\ref{thm:hodge} that 
$(C, \omega) \in \BHH(2,2)$. Note that $\HH(2,2)$ has two connected components $\HH(2,2)^{\hyp}$ and $\HH(2,2)^{\odd}$. In this example, $C$ is in the closure of locus of genus three hyperelliptic curves. However, by assumption $z_1$ and $z_2$ are ramification points of the corresponding 
admissible double cover, hence they are not conjugate under the hyperelliptic involution. We thus conclude that $(C, \omega) \in \BHH(2,2)^{\odd}$ and $(C, \omega) \not\in \BHH(2,2)^{\hyp}$. 
\end{example}

\begin{remark}
In Theorem~\ref{thm:hodge}, the zero orders $b'$ and $b''$ on both branches of a holomorphic node matter, not only their sum. 
For example, translation surfaces in $\HH(2)$ \emph{cannot} degenerate to two flat tori $E_1$ and $E_2$ attached at one point $q$ such that both have \emph{nonzero} area (in this case $b' = b'' = 0$, hence Theorem~\ref{thm:hodge} only implies smoothing into $\HH(1,1)$). This is because the dualizing line bundle restricted to $E_i$ is $\OO_{E_i}(q)$, which has degree one. In particular, it cannot have a double zero, unless the stable differential vanishes entirely on $E_i$. However, if we forget the flat structure and only keep track of the limit position of the double zero, using the notion of twisted canonical divisors (Section~\ref{subsec:twisted}) we will see that points in $E_i$ that are $2$-torsions to $q$ appear as all possible limits of the double zero.  
\end{remark}

We have discussed the case when $\omega$ has isolated zeros. If $\omega$ vanishes on a component of $C$, we can obtain a similar result
by tracking the zero orders on the branches of the nodes contained in the complement of the vanishing component. For ease of statement, let us deal with the case when 
$\omega$ vanishes on only one component $C'$, where $C'$ is a connected subcurve of $C$. The general case that $\omega$ vanishes on more components can be similarly tackled without further difficulty. 

Let $C'' = \overline{C\backslash C'}$ and $C'\cap C'' = \{ q_1, \ldots, q_m \}$. 
Since $\omega$ vanishes on $C'$, all $q_1, \ldots, q_m$ are holomorphic nodes. 
In the normalization $\wt{C}$, let $q'_l$ and $q_l''$ be the preimages of $q_l$ contained in $\wt{C}'$ and in $\wt{C}''$, respectively, for $l=1,\ldots, m$.  

Consider $\wt{\omega}$ restricted to $\wt{C}''$ such that 
$$ (\wt{\omega}|_{\wt{C}''})_0 - (\wt{\omega}|_{\wt{C}''})_{\infty} = \sum_i a_i z_i + \sum_j (b'_j h'_j + b''_j h''_j) - \sum_k (p'_k + p''_k) + \sum_l c''_l q_l'' $$
where $z_i$ are the isolated zeros of $\wt{\omega}$ in the smooth locus of $C$, $h'_j, h''_j$ are the preimages of the node $h_j$ that is not a pole of $\wt{\omega}$ and not contained in $C'$, and $p'_k, p''_{k}$ are the simple poles of $\wt{\omega}$ on the preimages of the node $p_k$. As before, 
$a_i \geq 1$ is the vanishing order of $\wt{\omega}$ at $z_i$, $b_j', b''_j \geq 0$ are the vanishing orders of $\wt{\omega}$ on the preimages $h_j', h_j''$ of the holomorphic node $h_j$, respectively, and $c''_l\geq 0$ is the vanishing order of $\wt{\omega}$ on the preimage $q''_l$ 
of $q_l$ contained in $\wt{C}''$. 

Suppose the arithmetic genus of $C'$ is $g'$ and let $\mu'$ be a partition of $2g' - 2$ such that 
$C'$ admits a differential $\omega'$ with signature $(c'_1, \ldots, c'_l, d_1, \ldots, d_s)$, where 
$c'_l$ is the vanishing order of $\omega'$ at $q'_l$ and $d_1, \ldots, d_s$ are the vanishing orders of $\omega'$ 
at the zeros other than the $q'_l$ in $C'$.   

\begin{corollary}
\label{cor:hodge-component}
In the above setting, we have 
$$(C, \omega) \in \BHH(\cdots, a_i, \cdots, b'_j+1, b''_{j}+1, \cdots, c'_l+1, c''_l+1, \cdots, d_r, \cdots). $$
\end{corollary}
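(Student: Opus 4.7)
The plan is to express $(C,\omega)$ as a limit within $H^0(C,K)$ of stable differentials that fall under the scope of Theorem~\ref{thm:hodge}, and then invoke closedness of the target stratum closure. To do so, I would first extend the given $\omega'$ by zero on $C''$ to obtain a section $\omega'_{\mathrm{ext}}\in H^0(C,K)$, and then consider the one-parameter family
$$\omega_t := \omega + t\,\omega'_{\mathrm{ext}},\qquad t\in\bbC^{\ast},$$
inside the fiber of $\BHH$ over $[C]$. The limit as $t\to 0$ is $(C,\omega)$.

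The first task is to check that $\omega'_{\mathrm{ext}}$ is actually a section of $K_C$, i.e.\ that its restriction to $\wt{C}$ satisfies the stable-differential residue condition at every node. At nodes internal to $C'$, this is inherited from $\omega'$; at nodes internal to $C''$ and at the interface nodes $q_l$, both branches of $\omega'_{\mathrm{ext}}$ are holomorphic (trivially on the $C''$ side; on the $C'$ side because $\omega'$ has a zero of order $c'_l\geq 0$ at $q'_l$), so the residues vanish on both branches. Since $\omega$ vanishes on $C'$ and already satisfies the residue condition at nodes internal to $C''$ and at the $p_k$, we conclude $\omega_t\in H^0(C,K)$ for every $t$.

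The main step is to apply Theorem~\ref{thm:hodge} to $(C,\omega_t)$ for $t\neq 0$. The associated stable differential on $\wt{C}$ restricts to $\wt{\omega}|_{\wt{C}''}$ on $\wt{C}''$ and to $t\omega'$ on $\wt{C}'$, so it has isolated zeros with precisely the orders listed in the statement: smooth zeros of order $a_i$ on $C''$ and $d_r$ on $C'$; the holomorphic nodes $h_j$ contribute branch orders $(b'_j,b''_j)$; the interface nodes $q_l$ are now holomorphic with branch orders $(c'_l,c''_l)$; and the preimages of each $p_k$ remain simple poles with residues matching those of $\omega$. Theorem~\ref{thm:hodge} therefore places $(C,\omega_t)$ in $\BHH(\cdots,a_i,\cdots,b'_j+1,b''_j+1,\cdots,c'_l+1,c''_l+1,\cdots,d_r,\cdots)$ for every $t\neq 0$.

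The final step is the scaling limit: $\omega_t\to\omega$ in $\BHH$ as $t\to 0$, so by closedness of the stratum closure the limit $(C,\omega)$ lies in the same set, as required. The only conceivable obstacle is verifying that the interface nodes $q_l$ genuinely play the role of ordinary holomorphic nodes for Theorem~\ref{thm:hodge} even though the two branches of $\omega_t$ there originate from \emph{a priori} unrelated summands; once $\omega_t$ is viewed as a single section of $K_C$, this is automatic, and no additional flat-geometric input is needed beyond the interval-slit and cylinder-plumbing operations already supplied in the proof of Theorem~\ref{thm:hodge}.
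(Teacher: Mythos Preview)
Your argument is correct and follows essentially the same strategy as the paper's proof. The paper likewise forms the combined differential (``the nodal flat surface given by $\omega'$ on $C'$ and $\omega$ on $C''$''), applies the local operations of Theorem~\ref{thm:hodge} to smooth it into the target stratum, and then scales $\omega'$ by $t\to 0$; you simply reorder these two steps by first invoking Theorem~\ref{thm:hodge} as a black box on each $(C,\omega_t)$ and then passing to the limit, which is a slightly cleaner packaging of the same idea.
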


\begin{proof}
Consider the nodal flat surface given by $\omega'$ on $C'$ and $\omega$ on $C''$ union at $q_1, \ldots, q_m$. 
Apply the local operations as in the proof of 
Theorem~\ref{thm:hodge} to smooth out $z_i, h_j, p_k, q_l$ and $d_t$ into the desired stratum. Meanwhile, scaling $\omega'$ 
by $t\cdot \omega'$ as $t\to 0$, the area of the flat surface $(C', t\cdot \omega')$ tends to zero while $\omega$ on $C''$ remains unchanged. The limit flat surface restricted to $C'$ corresponds to the identically zero differential on $C'$, hence equal to $\omega|_{C'}$. We 
thus obtain $(C, \omega)$ as a degeneration of differentials in the desired stratum. 
\end{proof}

\begin{example}
Let $C$ consist of two smooth curves $C'$ and $C''$, both of genus two, attached at a node $q$. Let 
$q'\in C'$ and $q''\in C''$ be the preimages of $q$ in the normalization of $C$. Let $\omega$ be a section of $K_C$, identified with 
a stable differential $\wt{\omega}$ on the normalization of $C$,  
such that 
$\omega|_{C'}\equiv 0$ and $(\wt{\omega}|_{C''})_0 = 2z''$ for a smooth point $z''\in C''$. In this case, $g' = 2$. Take 
$\mu' = (2)$ and $\omega' \in \HH(2)$ on $C'$ 
such that $(\omega')_0 = 2z'$ for a smooth point $z' \in C'$, see Figure~\ref{fig:2112}. 

 \begin{figure}[h]
    \centering
    \psfrag{z'}{$z'$}
    \psfrag{z''}{$z''$}
    \psfrag{q}{$q$}
     \psfrag{C'}{$C'$}
    \psfrag{C''}{$C''$}
    \includegraphics[scale=0.5]{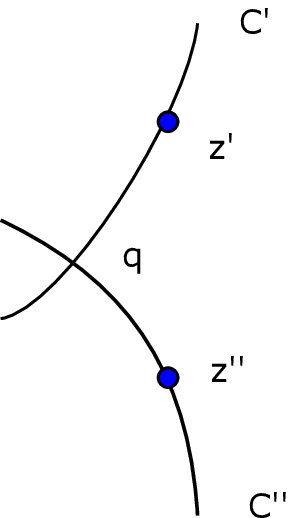}
    \caption{\label{fig:2112} A curve in $\BHH(2, 1, 1, 2)$}
    \end{figure}

 Then we have $c' = c'' = 0$ and $a = d = 2$ in the above notation. 
 By Corollary~\ref{cor:hodge-component}, we conclude that $(C, \omega) \in \BHH(2, 1, 1, 2)$. 
\end{example}

\section{Degenerations in the Deligne-Mumford space}
\label{sec:canonical}

As we have seen, a stable differential in the closure $\BHH(\mu)\subset \BHH$ may vanish entirely on a component of the underlying curve. In this case when Abelian differentials in $\HH(\mu)$ degenerate to it, we lose the information about the vanishing component as well as the limit positions of the zeros. Below we describe a refined compactification that resolves this issue. Note that modulo scaling, an Abelian differential is uniquely determined by its zeros, i.e. the corresponding canonical divisor. Recall that $\calP(\mu)$ parameterizes canonical divisors with signature $\mu$. Viewing it as a subset in $\MM_{g,n}$ by marking the zeros, we can take the closure $\BPP(\mu)\subset \BM_{g,n}$. The question reduces to analyzing which stable pointed curves appear in the boundary of $\BPP(\mu)$. 

Inspired by the theory of limit linear series, we introduce the notion of twisted canonical divisors, first on curves of pseudocompact type. The upshot is that when canonical divisors in $\calP(\mu)$ degenerate from underlying smooth curves to a curve of pseudocompact type, the limit object in $\BM_{g,n}$ must be a twisted canonical divisor. Conversely, in a number of cases twisted canonical divisors do appear as such limits, but not always. 
In the end we also discuss how to extend this notion to curves of non-pseudocompact type. 

\subsection{Twisted canonical divisors}
\label{subsec:twisted}

For the reader to get a feel, let us begin with curves of compact type with only one node. 
Suppose that a curve $C$ has a node $q$ connecting two smooth components $C_1$ and $C_2$ of genera $g_1$ and $g_2$, respectively, with 
$g_1 + g_2 = g$. Moreover, suppose that $C$ is the limit of a family of smooth genus $g$ curves $C_t$ over a punctured disk $T$, see Figure~\ref{fig:family}. 

 \begin{figure}[h]
    \centering
    \psfrag{T}{$T$}
    \psfrag{C2}{$C_2$}
    \psfrag{q}{$q$}
     \psfrag{Ct}{$C_t$}
    \psfrag{C1}{$C_1$}
    \includegraphics[scale=0.7]{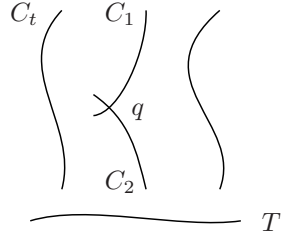}
    \caption{\label{fig:family} A curve of compact type in a family of curves}
    \end{figure}

Let $\calX\to T$ be the universal curve. The dualizing line bundle $K_{C}$ serves as a limit of canonical line bundles  
$K_{C_t}$ as $t\to 0$. However, this limit is \emph{not} unique. Becuase for any $m_i \in \bbZ$, 
\begin{eqnarray}
\label{eq:twisted-bundle}
\OO_{\calX}\left(\sum_{i=1}^2 m_i C_i\right)|_{C}\otimes K_C
\end{eqnarray}
is also a limit of $K_{C_t}$. 

Observe that 
$$\OO_{\calX}( C_1+C_2)|_C \cong \OO_C,$$ 
$$\OO_{\calX}(C_2)|_{C_1} \cong \OO_{C_1}(q),$$ 
$$\OO_{\calX}(C_1)|_{C_1} \cong \OO_{C_1}(-q).$$ 
Hence restricted to $C$, such limit line bundles are determined by the twisting coefficients $m_i$ and independent of 
the smoothing family $\calX$. We say that 
the line bundle in \eqref{eq:twisted-bundle}
is a \emph{twisted canonical line bundle} on $C$. Equivalently in this case, a twisted canonical line bundle consists of the data 
$$(K_{C_1}(a_1q), K_{C_2}(a_2q)),$$ 
where  
$a_1+a_2 = 2$. From the viewpoint of differentials, $K_{C_i}(a_iq)$ is the sheaf of meromorphic differentials on $C_i$ that are holomorphic away from $q$ and have pole order at most $a_i$ at $q$. 

The degree of $K_{C_i}(a_iq)$ is $d_i = 2g_i - 2 + a_i$. We say that $(K_{C_1}(a_1q), K_{C_2}(a_2q))$ has 
\emph{bidegree} $(d_1, d_2)$, where $d_1 + d_2 = 2g-2$. In particular, the dualizing line bundle $K_C$ corresponds to $(K_{C_1}(q), K_{C_2}(q))$ of bidegree $(2g_1-1, 2g_2-1)$. Note that knowing either one of the $a_i$ or one of the $d_i$ suffices to determine
a twisted canonical line bundle on $C$. 

\begin{remark}
\label{rem:twisted-pseudo}
Recall that a nodal curve is called of pseudocompact type, if each of its nodes is either separating or is an internal node of an irreducible component. Curves of compact type are special cases of pseudocompact type. As in the theory of limit linear series (\cite[p. 265-266]{HarrisMorrison}), the above analysis also applies to curves of pseudocompact type by treating $K_{C_i}$ as the dualizing line bundle of an irreducible component $C_i$, if $C_i$ contains self-nodes, and $g_i$ stands for the \emph{arithmetic} genus of $C_i$. In contrast, for a curve of non-pseudocompact type, even if we fix the line bundles restricted to each of its components, in general they do not determine the total line bundle. There are extra so called \emph{enrich structures} coming into play (see \cite{Maino}), which depend on first-order deformations of the curve. 
\end{remark}

Now suppose that a family of canonical divisors $m_1 z_1(t) + \cdots + m_n z_n(t)$ on $C_t$ degenerate to 
$m_1 z_1 + \cdots + m_n z_n$ on $C$, where all the $z_i$ are contained in the smooth locus of $C$. Define 
$$M_i = \sum_{z_j \in C_i} m_j$$ 
for $i=1,2$, measuring the total vanishing orders of the limit zeros in $C_i$. We use 
 ``$\sim$'' to denote \emph{linear equivalence} between two divisors on a curve. 

\begin{proposition}
\label{prop:limit-canonical}
In the above setting, we have 
\begin{eqnarray}
\label{eq:limit-canonical}
 \sum_{z_j\in C_i} m_j z_j + (2g_i - 2 - M_i)q \sim K_{C_i} 
 \end{eqnarray}
for $i=1, 2$. 
\end{proposition}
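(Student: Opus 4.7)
The plan is to compare the divisor class of the family of canonical divisors with the relative dualizing sheaf on the total space of a smoothing, and to exploit the fact that their difference must be vertical, i.e.\ a twist by components of the special fiber. Choose a one-parameter smoothing $\pi: \calX \to T$ of $C = C_1 \cup_q C_2$ with $\calX$ regular (possible since the only singularity of $C$ is the node $q$, locally of the form $xy = t$ on $\calX$). After a possible finite base change, the assumed degeneration of canonical divisors provides sections $z_j(t)$ on $T \setminus\{0\}$ that specialize to $z_j \in C$; since each $z_j$ lies in the smooth locus of $C$, hence in the regular locus of $\calX$, the valuative criterion extends them to sections $z_j: T \to \calX$. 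Let $Z := \sum_j m_j z_j(T)$ be the resulting Cartier divisor on $\calX$.

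Now consider the line bundle $\calL := \OO_\calX(Z)\otimes \omega_{\calX/T}^{-1}$. On each smooth fiber $C_t$ with $t\neq 0$, we have $\sum_j m_j z_j(t) \sim K_{C_t}$ by assumption, hence $\calL|_{C_t}\cong \OO_{C_t}$. Since $\calX$ is regular and, after shrinking $T$, the only vertical prime divisors in $\calX$ are $C_1$ and $C_2$, a rational section of $\calL$ trivializing it on the generic fiber must have vertical divisor of the form $a_1 C_1 + a_2 C_2$, so
$$\calL \;\cong\; \OO_\calX(a_1 C_1 + a_2 C_2)$$
for some integers $a_1, a_2$ (the $\pi^{*}$ factor from $\Pic(T)$ becomes trivial after further shrinking).

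Restricting to $C_1$ uses the standard identifications $\omega_{\calX/T}|_{C_1}\cong K_{C_1}(q)$, $\OO_\calX(C_2)|_{C_1}\cong \OO_{C_1}(q)$ (from the local equation of $C_2$ vanishing simply at $q$), and $\OO_\calX(C_1)|_{C_1}\cong \OO_{C_1}(-q)$ (from $C_1 + C_2 \sim \pi^{-1}(0)$ being trivial in $\Pic(\calX)$), together with the fact that the sections through points $z_j \in C_2$ do not meet $C_1$. Putting these together gives
$$\OO_{C_1}\Bigl(\sum_{z_j\in C_1} m_j z_j\Bigr)\otimes K_{C_1}(q)^{-1} \;\cong\; \OO_{C_1}\bigl((a_2 - a_1)\,q\bigr),$$
equivalently $\sum_{z_j\in C_1} m_j z_j + (a_1 - a_2 - 1)\,q \sim K_{C_1}$. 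A degree count forces $a_1 - a_2 - 1 = 2g_1 - 2 - M_1$, yielding the claimed linear equivalence on $C_1$; the symmetric argument on $C_2$ finishes the proof.

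The only nontrivial point is the passage from \emph{``$\calL$ is trivial on every smooth fiber''} to \emph{``$\calL \cong \OO_\calX(a_1 C_1 + a_2 C_2)$''}; this is where regularity of the total space $\calX$ is essential, as emphasized in Remark~\ref{rem:twisted-pseudo} (for curves of non-pseudocompact type the analogous identification requires additional enriched-structure data on the components). Everything else in the argument is adjunction and a one-line degree count on each component.
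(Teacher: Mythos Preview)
Your proof is correct and follows essentially the same approach as the paper: both compare the section divisor $Z$ with $\omega_{\calX/T}$ on a regular total space and identify their difference as a vertical twist $a_1 C_1 + a_2 C_2$, then restrict to each component. The only cosmetic difference is that the paper writes down the specific twist $(M_1 - 2g_1 + 1)C_2$ at the outset and verifies via semicontinuity that the resulting degree-zero bundle on $C$ has a nonzero section (hence is trivial on each $C_i$), whereas you leave the $a_i$ undetermined, restrict, and recover them by a degree count---a slightly cleaner route that bypasses the semicontinuity step.
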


In particular, this proposition proves the ``only if'' part of Theorem~\ref{thm:canonical}. 

\begin{proof}
Let $\pi: \calX \to T$ be the universal curve and $\omega_{\calX/T}$ the relative dualizing line bundle. Denote by $Z_i$ the section 
of $\pi$ corresponding to the zero $z_i(t)$ for $i=1,\ldots,n$. 
Define a line bundle $\calL$ on $\calX$ by 
$$ \calL:= \omega_{\calX/T} \otimes \OO_{\calX}((M_1 - 2g_1+1)C_2))\otimes \OO_{\calX}\left(-\sum_{i=1}^n m_iZ_i\right). $$
Note that for $t\neq 0$, $\sum_{i=1}^n m_iz_i(t)$ is a canonical divisor 
of $C_t$, and $C_t$ and $C$ are disjoint. It follows that 
$$\calL|_{C_t} = K_{C_t}\left(-\sum_{i=1}^n m_iz_i(t)\right) = \OO_{C_t}. $$ 
Moreover,  
$$\calL|_{C_i} = K_{C_i} \left( (M_i - 2g_i+2) q - \sum_{z_j\in C_i} m_j z_j\right) $$
for $i=1,2$. 

Consider the direct image sheaf $\pi_{*}\calL$. For $t\neq 0$, 
$$(\pi_{*}\calL)|_{C_t} = H^0(C_t, \OO) = \bbC.$$ 
It follows from semicontinuity that 
$h^0(\calL|_{C}) \geq 1$. Note that $\calL|_{C_i}$ is a line bundle of degree zero for $i=1,2$. We claim that $\calL|_{C_i}$ is the trivial line bundle on $C_i$. Prove by contradiction. If say $\calL|_{C_1} \neq \OO_{C_1}$, then for any section $\sigma\in H^0(\calL|_{C})$, $\sigma|_{C_1} \in H^0(\calL|_{C_1})$, hence $\sigma|_{C_1} \equiv 0$. In particular, $\sigma(q) = 0$, hence $\sigma|_{C_2} \equiv 0$ and $\sigma \equiv 0$ on $C$, contradicting that $h^0(\calL|_{C}) \geq 1$. 

Since $\calL|_{C_i} = \OO_{C_i}$, it follows that 
$$ K_{C_i} \sim  (2g_i-2-M_i ) q + \sum_{z_j\in C_i} m_j z_j $$
for $i=1,2$, thus proving the proposition. 
\end{proof}

From the viewpoint of differentials, Relation \eqref{eq:limit-canonical} means there exists a (possibly meromorphic) differential on $C_i$ such that its zero order at $z_j\in C_i$ is $m_j$ and its zero or pole order at $q$ is $2g_i-2-M_i$. Note that 
$$(2g_1-2 - M_1) + (2g_2 -2 - M_1) = -2. $$
Relation \eqref{eq:limit-canonical} also implies that $2g_i-2-M_i \neq -1$, for otherwise $q$ is a base point of $K_{C_i}(q)$ and hence some zero 
$z_j$ would coincide with $q$, leading to a contradiction. As a consequence, exactly one of $2g_i-2-M_i$ is negative, and we call the corresponding component $C_i$ a \emph{polar component} of $C$. 

For $(C, z_1, \ldots, z_n)$ satisfying \eqref{eq:limit-canonical}, we say that $\sum_{i=1}^n m_i z_i$ is a \emph{twisted canonical divisor}. By definition, a  twisted canonical divisor uniquely determines the corresponding twisted canonical line bundle whose restriction to $C_i$ is the line bundle 
$$\OO_{C_i}\left(\sum_{z_j\in C_i} m_j z_j\right). $$
Note that $C_i$ is a polar component if and only if the restriction of the twisted canonical line bundle has degree $M_i$ strictly bigger than $2g_i - 2$. 

The essence of Proposition~\ref{prop:limit-canonical} says that limits of canonical divisors in $\BPP(\mu)$ are twisted canonical divisors in the locus of curves of pseudocompact type. Clearly the concepts of twisted canonical line bundles and twisted canonical divisors as well as Proposition~\ref{prop:limit-canonical} can be generalized without any difficulty to curves of pseudocompact type with arbitrarily many nodes. Nevertheless, for curves of pseudocompact type with more nodes, there exist twisted canonical divisors that do not appear as limits of ordinary canonical divisors. This is one place where the theory of limit linear series can help us extract more delicate information (see Example~\ref{ex:dimension-bound} and Proposition~\ref{prop:g=2}). 

\begin{remark}
If the universal curve $\calX$ is not smooth at a \emph{separating} node of $C$, by blowing up and making finite base change successively, we can resolve the singularity. The resulting special fiber amounts to inserting chains of rational curves between the two components connected by the node. In this case, the curve is still of pseudocompact type, hence the above argument works and we can deduce the same result (see \cite[Theorem 2.6 and Remark after]{EisenbudHarrisLimit}). 
In addition, we do not have to resolve the surface singularity at a self-node, because locally at such a node the corresponding component of $C$ 
is still a \emph{Cartier} divisor of $\calX$ (see e.g. \cite[Section 1.3]{EstevesMedeiros}). However, for a curve of non-pseudocompact type, inserting chains of rational curves at a non-separating and external node may change significantly the possible types of twisted canonical divisors. We analyze this issue in detail in Section~\ref{subsec:non-compact}. 
\end{remark}

\subsection{Dimension bounds on spaces of twisted canonical divisors}
\label{subsec:dimension}

Recall that $\mu = (m_1, \ldots, m_n)$ is a partition of $2g-2$. Let $\pi: \calX\to B$ be a smoothing family of genus $g$ curves of compact type with $n$ sections $z_1, \ldots, z_n$, in the sense of \cite[p. 354]{EisenbudHarrisLimit}.
Inspired by \cite[Theorem 3.3]{EisenbudHarrisLimit}, we show that there exists a variety $\calP(\calX/B; \mu)$ parameterizing $(C, z_1, \ldots, z_n)\in B$ such that $\sum_{i=1}^n m_i z_i$ is a twisted canonical divisor on $C$. 
Moreover, $\calP(\calX/B; \mu)$ has a \emph{determinantal structure}, which gives rise to a lower bound for every irreducible component of $\calP(\calX/B; \mu)$. 


\begin{theorem}
\label{thm:scheme-twisted}
There exists a variety $\calP(\calX/B; \mu)$ over $B$, compatible with base change, whose point over any $q$ in $B$ (if not empty) corresponds to a twisted canonical divisor given by $m_1 z_1(q) + \cdots m_n z_n (q)$. Furthermore, every irreducible component of $\calP(\calX/B; \mu)$ has dimension $\geq \dim B - g$. 
\end{theorem}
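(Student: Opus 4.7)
The plan is to follow the Eisenbud--Harris construction in \cite{EisenbudHarrisLimit}, Theorem 3.3, adapted to the simpler setting of a single section rather than an $(r+1)$-dimensional subspace. The construction and the dimension bound will come from exhibiting $\calP(\calX/B;\mu)$ as a determinantal locus inside $B$, where the relevant rank bookkeeping is governed by the Euler characteristic of a specific degree-zero line bundle.

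First I would introduce the candidate line bundle on the total space,
$$\calL := \omega_{\calX/B}\otimes \OO_{\calX}\Big(-\sum_{i=1}^n m_i Z_i\Big),$$
which restricts to a degree-$0$ line bundle on every fiber. On a smooth fiber $C_t$, the condition $\sum m_i z_i(t) \sim K_{C_t}$ is equivalent to $\calL|_{C_t}\cong\OO_{C_t}$, equivalently $h^0(\calL|_{C_t})\geq 1$. For a reducible fiber $C = \bigcup_{j=1}^k C_j$ of compact type, stratify $B$ by the locally constant datum recording into which component each section $Z_i$ specializes. On such a stratum the integers $M_j = \sum_{z_i\in C_j} m_i$ are fixed, and (after the standard resolution of surface singularities at separating nodes described in the remark following Proposition~\ref{prop:limit-canonical}) one can choose integers $b_j$, unique up to adding a common constant, so that
$$\calL_a := \calL\otimes\OO_{\calX}\Big(\textstyle\sum_j b_j C_j\Big)$$
has degree $0$ on every component $C_j$ of every fiber in the stratum. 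A direct computation using $\OO_{\calX}(C_{j'})|_{C_j}\cong\OO_{C_j}(q)$ at an adjacent node $q$ shows that $\calL_a|_{C}\cong\OO_{C}$ precisely when the twisted canonical condition of Proposition~\ref{prop:limit-canonical} holds on every component.

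Next, to carry this out scheme-theoretically, represent $R\pi_*\calL_a$ locally on the stratum by a two-term complex $\varphi\colon V_0\to V_1$ of locally free sheaves whose fiberwise cohomology computes $H^*(\calL_a|_{C_q})$ and which is compatible with arbitrary base change; this is standard and parallels \cite{EisenbudHarrisLimit}, Lemma 3.5. Since $\chi(\calL_a|_{C_q}) = 1-g$ on every fiber, one has $\rank V_0 - \rank V_1 = 1-g$. The condition $h^0(\calL_a|_{C_q})\geq 1$ is equivalent to $\ker\varphi_q\neq 0$, i.e.\ $\rank\varphi\leq \rank V_0 - 1$, which is cut out by the maximal minors of $\varphi$. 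Patching these determinantal loci over the strata and taking scheme-theoretic closure produces $\calP(\calX/B;\mu)\subset B$; its base-change compatibility is immediate from that of the constructions of $\calL_a$ and of $[V_0\to V_1]$.

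For the dimension bound, apply the classical codimension estimate for determinantal loci: the subscheme $\{\rank\varphi\leq \rank V_0-1\}$ has codimension at most
$$(\rank V_0-(\rank V_0-1))\cdot(\rank V_1-(\rank V_0-1)) = 1\cdot(\rank V_1-\rank V_0+1) = g$$
in every irreducible component of its support, which yields $\dim\calP(\calX/B;\mu)\geq \dim B - g$. The main obstacle is not any single step but the patching across boundary strata of $B$: the twist $\calL_a$ jumps discretely as the distribution of zeros among components changes, and one needs a careful bookkeeping—identical to that in \cite{EisenbudHarrisLimit}—to check that the determinantal loci constructed stratum-by-stratum glue into a single subscheme of $B$ with the asserted properties. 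Since our situation is governed by one section rather than by Pl\"ucker coordinates on a Grassmannian, all other parts of the argument are formally simpler than in the limit linear series case.
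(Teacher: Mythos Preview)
Your proposal is correct and follows the same Eisenbud--Harris framework that the paper invokes, but the determinantal bookkeeping you use is genuinely different from the paper's. The paper does not work with the degree-zero bundle $\calL_a=\omega_\mu(-\sum m_iZ_i)$ and the two-term complex for $R\pi_*\calL_a$; instead, it fixes (via the EH construction) the twisted relative canonical bundle $\omega_\mu$ with the prescribed multi-degree, twists up by a sufficiently ample relative divisor $D$ of total degree $d$ so that $\pi_*\omega_\mu(D)$ is locally free of rank $g-1+d$, and works inside the projective bundle $\calP'=\bbP(\pi_*\omega_\mu(D))$. The locus $\calP'(\calX/B;\mu)$ is then cut out by the $d$ linear conditions ``vanishes along $D$'' and the $\sum m_i=2g-2$ linear conditions ``vanishes to order $\geq m_i$ at $z_i$'', giving the bound $\dim B+(g-2+d)-d-(2g-2)=\dim B-g$ by a naive count; finally one discards those components where the generic section vanishes identically on a component of the fiber. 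Your approach buys a cleaner identification of the locus (for a multidegree-zero bundle on a curve of compact type, $h^0\geq 1$ is equivalent to triviality, so there are no spurious components to discard) and feeds the Euler characteristic $1-g$ directly into the degeneracy-locus codimension formula. The paper's approach buys a more elementary codimension count (each condition is a single equation in a projective bundle), at the cost of introducing the auxiliary $D$ and then having to throw away components. Both rely on the same EH machinery to produce $\omega_\mu$ globally once the general fiber is smooth, so the ``patching across strata'' obstacle you flag is handled identically in the two arguments.
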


\begin{proof}
Our argument follows from \cite[Proof of Theorem 3.3]{EisenbudHarrisLimit}. Let $X_q$ be the fiber of $\calX$ over $q$. 
If in $\calX$ no nodes are smoothed, it is clear how to define the variety of twisted canonical divisors, by taking the union of strata 
of ordinary (possibly meromorphic) canonical divisors on each component of $X$. 

Suppose now some of the nodes of fibers of $\pi$ are smoothed in the generic fiber. It suffices to deal with the case when 
the general fiber of $\calX$ is smooth. Then for an arbitrary smoothing family, one regards the family as being obtained from 
several families, in each of which all nodes are smoothed (see \cite[Figure in p. 355]{EisenbudHarrisLimit}). 

It remains to prove the result when the general fiber of $\calX$ is smooth. Since the problem is local on $B$, we assume that $B$ is affine. Take a relatively ample divisor $D$ on $\calX$ such that $D$ is contained in the smooth locus of $\pi$ and disjoint from the sections $z_i$. Replacing $D$ with a high multiple of itself, we may assume that it intersects every component of a reducible fiber with high degree. Denote by $d$ the total degree of $D$ relative over $B$. 

Let $\omega_{\mu}$ be a twisted relative canonical line bundle on $X$ such that restricted to each fiber 
$(C, z_1, \ldots, z_n)$ it is the unique twisted canonical line bundle of multi-degree $\sum_{z_j\in C_i} m_j$ on every component 
$C_i$ of $C$. The existence of $\omega_{\mu}$ is explained in \cite[p. 359]{EisenbudHarrisLimit}. It follows that 
$\pi_{*} \omega_{\mu}(D)$ is a vector bundle of rank 
$$1-g+ (2g-2 + d) = g-1 +d$$ 
by Riemann-Roch. Let $\calP'$ be the corresponding projective bundle with fiber dimension $g-2 + d$ over $B$. A point of $\calP'$ over $q' \in B$ is thus a section 
$\sigma \in H^0( \omega_{\mu}(D)|_{X_{q'}})$, up to the equivalence $\sigma \sim \lambda \sigma$ for a nonzero scalar $\lambda$. 

Consider the subvariety $\calP'(\calX/B; \mu)$ in $\calP'$ cut out by the following groups of equations: 
\begin{itemize}
\item \emph{Vanishing on $D$}.  We require that $\sigma$ vanishes on $D$. 
\item \emph{Ramification at $z_i$}. For each section $z_i$, $\sigma$ vanishes on $z_i$ with multiplicity $\geq m_i$. 
\end{itemize}
 
The vanishing condition on $D$ is given by $d$ equations. The ramification condition at each $z_i$ imposes $m_i$ equations, so the total ramification conditions impose $\sum_{i=1}^n m_i = 2g-2$ equations. It follows that the dimension of every irreducible component of $\calP'(\calX/B; \mu)$ is at least 
$$ \dim B + (g-2 + d) - d - (2g-2) = \dim B - g. $$
Let $U$ be an irreducible component of $\calP'(\calX/B; \mu)$. For a general point $\sigma \in U$, there are two possibilities. First, if $\sigma$ vanishes on a component of the underlying curve $C$, then this property holds for all sections parameterized in $U$. The other case is that $\sigma$ has isolated zeros in $C$. Then it follows from the construction that 
$$(\sigma)_0 = \sum_{i=1}^n m_i z_i + D,$$ 
which gives rise to a twisted canonical divisor by subtracting the fixed part $D$. Conversely, a twisted canonical divisor determines such a $\sigma$ up to scaling, where $\sigma$ has isolated zeros in $C$. By collecting irreducible components of $\calP'(\calX/B; \mu)$ of the latter type, we thus obtain the desired $\calP(\calX/B; \mu)$ parameterizing twisted canonical divisors with signature $\mu$. 
\end{proof}

We present an example to show that the dimension bound in Theorem~\ref{thm:scheme-twisted} can be attained. 

\begin{example}
\label{ex:dimension-bound}
Consider $\mu = (1,1)$ in $g=2$. Let $B_1$ be the locus of curves of compact type in $\BM_{2,2}$. Let $U_1$ be the irreducible locus in $\BM_{2,2}$ parameterizing pointed curves $C$ that consist of two elliptic curves $E_1$ and $E_2$, connected by a rational curve $R$, where the two marked points $z_1$ and $z_2$ are contained in $R$. Let $q_i = E_i \cap R$ for $i=1,2$, see Figure~\ref{fig:101}.  
 \begin{figure}[h]
    \centering
     \psfrag{z1}{$z_1$}
    \psfrag{z2}{$z_2$}
   \psfrag{q1}{$q_1$}
    \psfrag{q2}{$q_2$}
     \psfrag{E1}{$E_1$}
       \psfrag{R}{$R$}
    \psfrag{E2}{$E_2$}
    \includegraphics[scale=1.0]{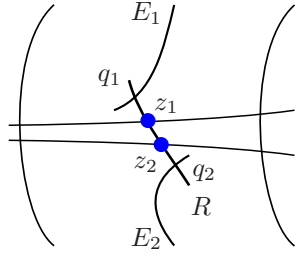}
       \caption{\label{fig:101} Two elliptic curves connected by a rational curve with two simple zeros }
\end{figure} 

Since $z_1 + z_2 \sim q_1 + q_2$ in $R\cong \bbP^1$, elements in $U_1$ are twisted canonical divisors with signature $(1,1)$. Note that $\dim B_1 = \dim \BM_{2,2} = 5$ and $\dim U_1 = 3$. 
Hence the dimension of $U_1$ is equal to $\dim B_1 - g$. 

Similarly for $\mu = (2)$ in $g=2$, consider $U_2 \subset \BM_{2,1}$ parameterizing the same underlying curve as in $U_1$, 
where the unique marked point $z$ is contained in $R$, see Figure~\ref{fig:1012}. 
 \begin{figure}[H]
    \centering
     \psfrag{z}{$z$}
      \psfrag{R}{$R$}
   \psfrag{q1}{$q_1$}
    \psfrag{q2}{$q_2$}
     \psfrag{E1}{$E_1$}
    \psfrag{E2}{$E_2$}
    \includegraphics[scale=1.0]{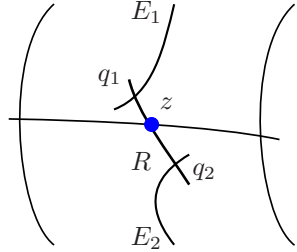}
       \caption{\label{fig:1012} Two elliptic curves connected by a rational curve with a double zero }
\end{figure}  
Since $2z \sim q_1 + q_2$ in $R$, elements in $U_2$ are twisted canonical divisors with signature $(2)$. Note that $\dim B_2 = \dim \BM_{2,1} = 4$ and $\dim U_2 = 2$. Hence the dimension of $U_2$ is equal to $\dim B_2 - g$. 
\end{example}

We further classify which twisted canonical divisors in the above examples come from degenerations of ordinary canonical divisors. 

\begin{proposition}
\label{prop:g=2}
In the above setting, $(C, z_1, z_2)\in U_1$ is contained in $\BPP(1,1)$ if and only if $z_1+z_2$ is a section in the linear series $g^1_2$ on $R$ induced by $2q_1 \sim 2q_2$. On the other hand, $U_2$ is disjoint with $\BPP(2)$. 
\end{proposition}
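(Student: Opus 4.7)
The plan is to exploit that on a smooth genus two curve every canonical divisor arises from the hyperelliptic pencil $|K_{C_t}| = g^1_2$, and to study degenerations of this pencil on $C = E_1 \cup_{q_1} R \cup_{q_2} E_2$ via Eisenbud-Harris limit linear series.

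For the necessary direction of both statements, I would compute the aspects of the limit $g^1_2$ on each component of $C$. The standard Eisenbud-Harris twisting concentrating the degree on $E_i$ produces the line bundle $L_{E_i} = \OO_{E_i}(2q_i)$; since $h^0(L_{E_i}) = 2$, the aspect on $E_i$ is forced to be $(L_{E_i}, H^0(L_{E_i}))$, which has vanishing sequence $(0, 2)$ at $q_i$. Compatibility at each node then forces the aspect on $R$ to have vanishing sequence $(0, 2)$ at both $q_1$ and $q_2$, so $V_R \subset H^0(\OO_R(q_1+q_2))$ must contain the (up to scalar unique) sections whose divisors are $2q_1$ and $2q_2$. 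These span a two-dimensional subspace, so $V_R$ is exactly the pencil on $R$ induced by $2q_1 \sim 2q_2$, and any limit canonical divisor supported on $R$ must lie in $V_R$.

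This already gives the non-existence claim for $U_2$: in affine coordinates with $q_1 = 0$ and $q_2 = \infty$ the pencil is spanned by $1$ and $t^2$, so its members have the form $\{\alpha, -\alpha\}$, and the only members with a double zero are $2q_1$ and $2q_2$. Since $z \in R$ lies in the smooth locus, $2z$ is not in this pencil, and hence $U_2 \cap \BPP(2) = \emptyset$. The same computation gives the ``only if'' direction of the $U_1$ statement.

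For the sufficient direction in $U_1$, observe that the aspects above together with any member $z_1 + z_2 \in V_R$ define a refined limit linear series $g^1_2$ on $C$ with simple vanishing at $z_1, z_2$. The adjusted Brill-Noether number is $\rho = g - (r+1)(g-d+r) = 2 - 2 = 0$, so by the Eisenbud-Harris smoothing theorem for refined limit linear series the limit is smoothable, the limit section yielding a family of canonical divisors $z_1(t) + z_2(t) \sim K_{C_t}$ specializing to $z_1 + z_2$; hence $(C, z_1, z_2) \in \BPP(1,1)$. The main technical step is thus the vanishing-sequence computation that pins down $V_R$; the main subtlety I anticipate is ruling out coarse (non-refined) limits, which would require $a^R_0 \geq 1$ at some $q_i$, making $q_i$ a base point of $V_R$ and forcing the limit divisor to contain $q_i$ --- contradicting that the marked points lie in the smooth locus.
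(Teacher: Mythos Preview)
Your proof is correct and follows essentially the same approach as the paper: both arguments pin down the $R$-aspect of the limit canonical $g^1_2$ via the vanishing-sequence compatibility at $q_1$ and $q_2$, then read off which divisors on $R$ can appear. You supply more detail than the paper (explicitly identifying $L_{E_i}=\OO_{E_i}(2q_i)$, the coordinate computation on $R$, the $\rho$ check), but the skeleton is identical.

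One small point on the sufficient direction for $U_1$: the Eisenbud--Harris smoothing theorem, as you invoke it, smooths the limit \emph{series}, not the limit \emph{section}. You still need to argue that the marked points $z_1,z_2$ deform to $z_1(t),z_2(t)$ with $z_1(t)+z_2(t)\in |K_{C_t}|$. This is easy --- on a nearby smooth genus two curve the smoothed $g^1_2$ is necessarily $|K_{C_t}|$, and the hyperelliptic conjugate of $z_1(t)$ specializes to $z_2$ precisely because $z_1+z_2$ lies in the pencil on $R$ --- but your phrase ``the limit section yielding a family of canonical divisors'' glosses over it. The paper is equally brief here and notes that admissible double covers give an alternative route to smoothability; either completion works.
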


\begin{proof}
First consider $(C, z_1, z_2)$ in $U_1$. If it is a degeneration of canonical divisors from $\calP(1,1)$, the canonical limit series on $C$ possesses a section $z_1+z_2$ in its aspect $g^1_2$ on $R$. By the compatibility condition on vanishing sequences (see Section~\ref{subsec:lls}), the aspect on $E_i$ has vanishing sequence 
$(0, 2)$ at $q_i$ for $i=1, 2$. Hence the vanishing sequences of the aspect on $R$ at $q_1$ and $q_2$ are both equal to $(0, 2)$. It implies that 
the aspect $g^1_2$ on $R$ is induced by $2q_1\sim 2q_2$, i.e. a double cover of $\PP^1$ ramified at $q_1, q_2$ and mapping $z_1, z_2$ to the same image. Conversely, if $z_1 + z_2$ is a section of such $g^1_2$ on $R$, using either the smoothability result of limit linear series or admissible double covers, we see that $(C, z_1, z_2)$ can be smoothed into $\calP(1,1)$. 

The same argument works for $(C, z)$ in $U_2$. If it was contained in $\calP(2)$, the limit $g^1_2$ on $R$ would be a double cover of $\PP^1$ ramified at $q_1, q_2$ and $z$, contradicting the fact that such a cover has only two ramification points.  
\end{proof}

The proposition implies that there exist twisted canonical divisors that \emph{cannot} be smoothed. Before studying in general which twisted canonical divisors appear in the boundary of $\BPP(\mu)$, we point out that as a component of space of twisted canonical divisors over $\MM_{g,n}$, the dimension of $\calP(\mu)$ is indeed \emph{one larger} than the dimension bound in Theorem~\ref{thm:scheme-twisted}: 
$$ \dim \calP(\mu) = 2g + n -2 = \dim \MM_{g,n} - (g-1). $$
The reason is because on a genus $g$ curve, the dimension of space of holomorphic sections for the canonical line bundle is \emph{one larger} than the other line bundles of degree $2g-2$. Under certain extra assumptions, we can take this into account and come up with a refined dimension bound as follows. 

\begin{proposition}
\label{prop:dimension-refined}
In the setting of Theorem~\ref{thm:scheme-twisted}, suppose that the twisted canonical line bundle on each fiber curve over $B$ has 
exactly $g$-dimensional space of sections. Then the dimension of every irreducible component of $\calP(\calX/B; \mu)$ is at least 
$$\dim B - (g-1). $$
\end{proposition}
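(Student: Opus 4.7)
The plan is to reprise the proof of Theorem~\ref{thm:scheme-twisted}, but to dispense with the auxiliary ample divisor $D$ and work directly with $\pi_*\omega_\mu$. The hypothesis that $h^0 \equiv g$ (rather than the ``generic'' value $g-1$ given by Riemann--Roch) is precisely what makes this possible, and accounts for the one extra unit in the dimension bound.

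After shrinking $B$ to assume it is reduced, the constancy hypothesis together with cohomology and base change (Grauert) shows that $\pi_*\omega_\mu$ is locally free of rank $g$ on $B$ and that its formation commutes with arbitrary base change. I would then form the projective bundle
$$ \bbP(\pi_*\omega_\mu) \longrightarrow B, $$
of relative dimension $g-1$, whose points over $q\in B$ correspond to nonzero sections $\sigma \in H^0(X_q, \omega_\mu|_{X_q})$ taken up to scaling. Cut out the incidence subvariety $\widetilde{\calP}\subset \bbP(\pi_*\omega_\mu)$ by the ramification conditions at the marked sections: for each $i$, require $\sigma$ to vanish to order $\geq m_i$ at $z_i(q)$, which contributes $m_i$ linear equations (the first $m_i$ Taylor coefficients of $\sigma$ in a local trivialization of $\omega_\mu$ near $z_i$). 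The total number of equations is $\sum_i m_i = 2g-2$, and the ambient variety has dimension $\dim B + (g-1)$, so Krull's Hauptidealsatz gives that every irreducible component of $\widetilde{\calP}$ has dimension at least
$$ \dim B + (g-1) - (2g-2) \;=\; \dim B - (g-1). $$

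Finally, exactly as at the end of the proof of Theorem~\ref{thm:scheme-twisted}, the irreducible components of $\widetilde{\calP}$ split into two types according to whether a general $\sigma$ has isolated zeros on the underlying curve (in which case a degree count forces $(\sigma)_0 = \sum m_i z_i$, a genuine twisted canonical divisor) or vanishes identically on some subcurve. The variety $\calP(\calX/B;\mu)$ is by definition the union of the components of the first type, which inherit the lower bound above. The one subtle step is verifying the local freeness of $\pi_*\omega_\mu$ at rank exactly $g$; this is where the hypothesis is used in an essential way, and without it one would not know that forming sections commutes with base change. Once that is in hand, the rest of the argument is the same parameter count as in Theorem~\ref{thm:scheme-twisted}, starting from a projective bundle of relative dimension one larger.
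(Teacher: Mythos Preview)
Your proposal is correct and follows essentially the same approach as the paper: drop the ample divisor $D$, use the hypothesis to conclude $\pi_*\omega_\mu$ is a rank-$g$ vector bundle, projectivize, and impose the $2g-2$ ramification equations to get the bound $\dim B + (g-1) - (2g-2) = \dim B - (g-1)$. Your version is in fact more detailed than the paper's (which compresses the argument to two sentences), explicitly invoking Grauert's theorem and spelling out the sorting of components at the end.
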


Compared to Theorem~\ref{thm:scheme-twisted}, we say that $\dim B - (g-1)$ is the \emph{refined} dimension bound. We remark that in the case $B = \MM_{g,n}$, i.e. smooth curves with $n$ marked points, $\calP(\calX/B; \mu)$ is just the stratum $\calP(\mu)$, hence has dimension equal to the refined dimension bound. 

\begin{proof}
The argument is similar to the proof of Theorem~\ref{thm:scheme-twisted}. The only difference is that we do not twist by a very ample divisor $D$. Instead, 
by assumption $\pi_{*}\omega_{\mu}$ is a vector bundle of rank $g$ over $B$. After projectivization and imposing the vanishing conditions on $z_i$, we obtain the desired dimension lower bound as
$$ \dim B + (g-1) - (2g-2) = \dim B - (g-1). $$
\end{proof}

\begin{corollary}
\label{cor:dimension-one}
In the setting of Theorem~\ref{thm:scheme-twisted}, suppose that $B$ parameterizes curves of compact type that have at most one node $q$ such that $h^0(C_i, (2g_i-2-M_i)q) = 1$ for any holomorphic component $C_i$. Then every irreducible component of $\calP(\calX/B; \mu)$ has dimension at least 
$$\dim B - (g-1).$$ 
\end{corollary}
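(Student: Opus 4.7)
The plan is to deduce the corollary from Proposition~\ref{prop:dimension-refined} by verifying its hypothesis on every fiber of $\pi\colon \calX \to B$: namely, that the twisted canonical line bundle $\omega_\mu$ satisfies $h^0(C, \omega_\mu) = g$. Since each fiber is of compact type with at most one node, two cases arise. If $C$ is smooth, then $\omega_\mu = K_C$ and $h^0(C, K_C) = g$ trivially.

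Now suppose $C = C_1 \cup_q C_2$ has exactly one node. By Section~\ref{subsec:twisted}, exactly one of the components, say $C_1$, is polar, with $2g_1 - 2 - M_1 \leq -2$ (forcing $M_1 \geq 2g_1$), while $C_2$ is holomorphic. I would compute $h^0(C, \omega_\mu)$ via the partial normalization sequence
$$0 \to \omega_\mu \to \omega_\mu|_{C_1} \oplus \omega_\mu|_{C_2} \to k(q) \to 0.$$
On the polar side, $\omega_\mu|_{C_1}$ has degree $M_1 \geq 2g_1$, so it is non-special with $h^0 = M_1 - g_1 + 1$ and base-point-free at $q$; in particular the evaluation $H^0(\omega_\mu|_{C_1}) \to k(q)$ is surjective. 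On the holomorphic side, the linear equivalence $\omega_\mu|_{C_2} \cong K_{C_2}(-(2g_2 - 2 - M_2)q)$ together with Serre duality gives $h^0(\omega_\mu|_{C_2}) = h^1((2g_2 - 2 - M_2)q)$. Combining the hypothesis $h^0((2g_2 - 2 - M_2)q) = 1$ with Riemann-Roch then yields $h^0(\omega_\mu|_{C_2}) = M_2 - g_2 + 2$. Taking cohomology of the sequence and using surjectivity of the connecting map, one obtains
$$h^0(C, \omega_\mu) = (M_1 - g_1 + 1) + (M_2 - g_2 + 2) - 1 = (M_1 + M_2) - (g_1 + g_2) + 2 = g,$$
using $M_1 + M_2 = 2g - 2$ and $g_1 + g_2 = g$.

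With the hypothesis of Proposition~\ref{prop:dimension-refined} verified, that proposition applies and returns the dimension bound $\dim B - (g - 1)$ on every irreducible component of $\calP(\calX/B; \mu)$. The main subtlety is that $h^0$ on the holomorphic component $C_2$ is Serre-dual to the $h^0$ of a multiple of the single point $q$, which could jump above $1$ for special pointed curves; the imposed hypothesis is precisely what excludes such jumps and pins down $h^0(C, \omega_\mu) = g$ uniformly across $B$, so that $\pi_{\ast} \omega_\mu$ is locally free of rank $g$ and the proof of Proposition~\ref{prop:dimension-refined} goes through verbatim.
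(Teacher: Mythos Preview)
Your proof is correct and follows essentially the same approach as the paper: verify $h^0(C,\omega_\mu)=g$ on each fiber by computing $h^0$ on the two components via Riemann--Roch (the paper does this directly, you via Serre duality on the holomorphic side) and then subtracting one for the gluing condition at $q$, after which Proposition~\ref{prop:dimension-refined} applies. Your explicit justification that evaluation at $q$ is surjective on the polar side (using $M_1\geq 2g_1$ to get base-point-freeness) is a bit more careful than the paper, which simply asserts that the gluing imposes one condition.
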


\begin{proof}
Suppose $C = C_1\cup_q C_2$ is a nodal curve in $B$. Recall that $M_i$ is the sum of zero orders for those zeros contained in $C_i$. 
Without loss of generality, assume that $M_1 > 2g_1-2$ and $M_2 \leq 2g_2 -2 $. Then $C_1$ is a polar component and $C_2$ is holomorphic. Let $K_i$ be the restriction of the corresponding twisted canonical line bundle $K_{\mu}$ to $C_i$, i.e. 
$$ K_i = K_{C_i}( (M_i - 2g_i+2)q), $$
which has degree $M_i$ for $i=1,2$. 
Since $M_1 > 2g_1 - 2$, by Riemann-Roch we have 
$$ h^0(C_1, K_1) = 1 - g_1 + M_1. $$
On $C_2$, we have 
$$ h^0(C_2, K_2) = h^0(C_2, (2g_2-2-M_2)q) + 1 - g_2 + M_2 = 2-g_2 +M_2. $$
It implies that 
$$ h^0(C, K_{\mu}) = h^0(C_1, K_1) + h^0(C_2, K_2) - 1 = g, $$
where we subtract one because gluing two sections of $K_1$ and $K_2$ at $q$ imposes one condition. 
Now the desired dimension bound follows from Proposition~\ref{prop:dimension-refined}. 
\end{proof}

\subsection{Smoothing twisted canonical divisors}
\label{subsec:smoothing}

Inspired by \cite[Theorem 3.4]{EisenbudHarrisLimit}, using dimension bounds on $\calP(\calX/B;\mu)$ 
we obtain the following smoothability result of twisted canonical divisors. 

\begin{proposition}
\label{prop:smoothing}
In the setting of Proposition~\ref{prop:dimension-refined}, suppose $(C, z_1, \ldots, z_n)\in B$ is contained in an irreducible component $U$ of $\calP(\calX/B; \mu)$ such that $\dim U = \dim B - (g-1)$, i.e. the refined dimension bound is attained. Then $(C, z_1, \ldots, z_n)\in \BPP(\mu)$. 
\end{proposition}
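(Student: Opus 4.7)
The plan is to adapt the Eisenbud–Harris smoothing argument for limit linear series (\cite[Theorem 3.4]{EisenbudHarrisLimit}) to the setting of twisted canonical divisors. Let $B^{\circ}\subseteq B$ denote the dense open subscheme parameterizing smooth curves, which is nonempty because $\pi:\calX\to B$ is a smoothing family. Over $B^{\circ}$ a twisted canonical divisor is just an ordinary canonical divisor, so the restriction $\calP(\calX/B^{\circ};\mu)$ is the pullback of $\calP(\mu)\subseteq\MM_{g,n}$ along $B^{\circ}\to\MM_{g,n}$; consequently the image of $\calP(\calX/B^{\circ};\mu)$ under the forgetful morphism $\calP(\calX/B;\mu)\to\BM_{g,n}$ lies in $\calP(\mu)$, and the image of its closure inside $\calP(\calX/B;\mu)$ lies in $\BPP(\mu)$.

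First I would reduce the proposition to the statement that $U$ meets the open subscheme $\calP(\calX/B^{\circ};\mu)$. Once this is established, irreducibility of $U$ forces $U\cap\calP(\calX/B^{\circ};\mu)$ to be dense in $U$, whence the image of $U$ in $\BM_{g,n}$ lies in $\BPP(\mu)$; in particular $(C,z_{1},\ldots,z_{n})\in\BPP(\mu)$, as desired.

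To show that $U$ meets $\calP(\calX/B^{\circ};\mu)$, I would argue by contradiction, supposing $U$ is contained in the preimage of the boundary $B\setminus B^{\circ}$. Using the determinantal construction underlying Proposition~\ref{prop:dimension-refined}, $\calP(\calX/B;\mu)$ embeds in the projective bundle $\PP(\pi_{*}\omega_{\mu})\to B$ of relative dimension $g-1$, cut out by the $\sum_{i}m_{i}=2g-2$ linear vanishing conditions at the sections $z_{i}$. The equality $\dim U=\dim B-(g-1)$ is precisely the statement that these conditions form a regular sequence at a generic point of $U$, so $U$ is locally a complete intersection of expected codimension inside $\PP(\pi_{*}\omega_{\mu})$ at that point. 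A first-order smoothing of any node of the underlying nodal curve then lifts, by the standard unobstructedness of local complete intersections of expected codimension, to a first-order deformation of the twisted canonical divisor inside $\calP(\calX/B;\mu)$; integrating this deformation produces a family inside $U$ with nearby smooth fibers, contradicting $U\subseteq\pi^{-1}(B\setminus B^{\circ})$.

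The main obstacle is the infinitesimal smoothing step, namely the verification that first-order smoothings of nodes lift to deformations inside $\calP(\calX/B;\mu)$ when the refined dimension bound is attained. This is essentially the transcription of \cite[Theorem 3.4]{EisenbudHarrisLimit} from limit linear series to twisted canonical divisors; the determinantal description made explicit in Proposition~\ref{prop:dimension-refined} is what allows the transcription, and the hypothesis $h^{0}(K_{\mu})=g$ throughout $B$ ensures that $\PP(\pi_{*}\omega_{\mu})$ remains honestly a $\PP^{g-1}$-bundle across the boundary, which is what keeps the local complete intersection property in force at nodal fibers.
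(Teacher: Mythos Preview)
Your reduction to showing that $U$ meets $\calP(\calX/B^{\circ};\mu)$ is correct, and arguing by contradiction is the right strategy. However, the mechanism you propose for the contradiction does not work as stated. The claim that ``a first-order smoothing of any node lifts, by the standard unobstructedness of local complete intersections of expected codimension, to a first-order deformation inside $\calP(\calX/B;\mu)$'' is not valid: knowing that $U$ is a local complete intersection of the expected codimension in $\PP(\pi_{*}\omega_{\mu})$ tells you that $U$ is Cohen--Macaulay, but it says nothing about the differential of $U\to B$ being surjective. There is no reason that a tangent vector to $B$ in the node-smoothing direction should lift to $T_{u}U$. And even if such a first-order lift existed, ``integrating'' it is not automatic.

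The paper (following \cite[Theorem~3.4]{EisenbudHarrisLimit} more closely than you do) avoids this by passing to the versal deformation space. One takes the versal family $\wt{\calX}\to\wt{B}$ around $(C,z_{1},\ldots,z_{n})$ and the induced map $f:B\to\wt{B}$, and chooses a component $\wt{U}$ of $\calP(\wt{\calX}/\wt{B};\mu)$ so that $U$ is a component of $f^{*}\wt{U}$. Because the hypothesis $h^{0}(K_{\mu})=g$ is preserved under specialization, Proposition~\ref{prop:dimension-refined} applies over $\wt{B}$ as well, giving $\dim\wt{U}\geq\dim\wt{B}-(g-1)$. If $\wt{U}$ were contained in the discriminant locus $\wt{B}'$, which is a hypersurface in $\wt{B}$, then $\dim\wt{U}>\dim\wt{B}'-(g-1)$, and pulling back along $f$ forces $\dim U>\dim B-(g-1)$, contradicting the assumed equality. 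Thus $\wt{U}$ meets the smooth locus, and the image of $(C,z_{1},\ldots,z_{n})$ lies in $\BPP(\mu)$. The key move you are missing is this passage to the versal base: it replaces your attempted infinitesimal lifting by a pure dimension comparison against a codimension-one boundary.
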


\begin{proof}
We adapt the proof of \cite[Theorem 3.4]{EisenbudHarrisLimit} to our case. 
Let $\wt{\calX} \to \wt{B}$ be the versal family of pointed curves around $(C, z_1, \ldots, z_n)$. Let $f: B \to \wt{B}$ be the map 
locally inducing $\calX \to B$ with $n$ sections of marked points. Let $\wt{U}$ be a component of $\calP(\wt{\calX}/ \wt{B}; \mu)$ such that 
$U$ is a component of $f^{*}\wt{U}$ and let $\wt{C}$ be the point in $\wt{U}$ corresponding to the 
pointed curve $C$. Since the dimension of space of sections does not drop under specialization, the family $\wt{\calX} \to \wt{B}$ satisfies 
the assumption in Proposition~\ref{prop:dimension-refined}, hence by the refined dimension bound we have 
$$\dim \wt{U} \geq \dim \wt{B} - (g-1).$$ 

If $\wt{U}$ does not lie entirely in the discriminant locus of $\wt{\calX}/\wt{B}$ parameterizing nodal curves, then it implies that a general point 
of $\wt{U}$ parameterizes an ordinary canonical divisor with signature $\mu$ on a smooth curve, and hence we are done. Suppose on the contrary 
that $\wt{U}$ lies over a component $\wt{B}'$ of the discriminant locus. Note that $\wt{B}'$ 
is a hypersurface in $\wt{B}$, hence 
$$\dim \wt{U} \geq \dim \wt{B} - (g-1) > \dim \wt{B}' - (g-1).$$
Thus every component of $f^{*}\wt{U}$, including $U$, has dimension $> \dim B - (g-1)$, contradicting the assumption 
that  $\dim U = \dim B - (g-1)$. 
\end{proof}

\begin{example}
Consider the boundary divisor $B = \Delta_{1;\{ 1\}}$ in $\BM_{g,1}$ parameterizing curves $C = C_1\cup_q C_2$, where $C_1$ has genus $g_1 = 1$, $C_2$ has genus $g_2 = g-1$, 
and the unique marked point $z$ is contained in $C_2$, see Figure~\ref{fig:1g-11}. 
 \begin{figure}[h]
    \centering
     \psfrag{C1}{$C_1$}
      \psfrag{C2}{$C_2$}
   \psfrag{q}{$q$}
    \psfrag{z}{$z$}
      \includegraphics[scale=0.7]{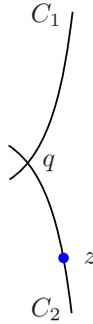}
       \caption{\label{fig:1g-11} An elliptic curve union a pointed curve of genus $g-1$ }
\end{figure}  
In this case $C_1$ is a holomorphic component with $M_1 = 0$ and $2g_1 - 2 - M_1 = 0$. 
Since $h^0(C_1,  \OO) = 1$, $B$ satisfies the assumption of Corollary~\ref{cor:dimension-one}. A twisted canonical divisor with signature 
$(2g-2)$ in this case is given by a curve in $\calP(2g-2, -2)$ glued to $C_1$ at the double pole. Hence we conclude that 
$$\dim \calP(\calX/B; 2g-2) = \dim \calP(2g-2, -2) + 1 = 2g - 2 = \dim B - (g-1). $$
By Proposition~\ref{prop:smoothing}, $\calP(\calX/B; 2g-2)$ is contained in $\calP(2g-2)$. Note that it proves the ``if'' part of 
Theorem~\ref{thm:canonical} in this special case. 

On the other hand, if $C_1$ contains $z$, then $M_2 = 0$, $C_2$ is a holomorphic component, and $2g_2 - 2 - M_2 = 2g-4$. But 
$h^0(C_2, (2g-4)q) > 1$ for any $g > 4$, hence Proposition~\ref{prop:smoothing} does not directly apply. 
\end{example}

Now let us prove the ``if'' part of Theorem~\ref{thm:canonical} in general. The upshot is a direct dimension count using the dimension of strata of meromorphic differentials, combined with an explicit deformation using the flat-geometric description of a higher order pole. 

\begin{proof}[Proof of Theorem~\ref{thm:canonical}, the ``if'' part]
Without loss of generality, assume that $C = (C_1\cup_q C_2, z_1, \ldots, z_n)$ such that 
$z_1, \ldots, z_k\in C_1$ and $z_{k+1}, \ldots, z_n \in C_2$, satisfying 
Relation~\eqref{eq:limit-canonical}, which is equivalent to 
$$(C_1, z_1, \ldots, z_k, q) \in \calP_1 := \calP(m_1, \ldots, m_k, 2g_1 - 2 - M_1), $$
$$(C_2, z_{k+1}, \ldots, z_n, q) \in \calP_2 := \calP(m_{k+1}, \ldots, m_n, 2g_2 - 2 - M_2). $$
Recall that $M_1 + M_2 = 2g-2$ and $g_1 + g_2 = g$. In particular, $2g_i - 2 - M_i = -1$ cannot hold 
for $i=1,2$, for otherwise $q$ would coincide with some $z_i$, because $K_C|_{C_i} = K_{C_i}(q)$ has a base point at $q$. 
It follows that exactly one of $\calP_1$ and $\calP_2$ is a stratum of meromorphic differentials with a pole at $q$ (see 
Section~\ref{subsec:pole}). Without loss of generality, further assume that $2g_1 - 2 - M_1 \geq 0$ and $2g_2 - 2 - M_2 < 0$. 
By \cite[Lemma 3.6]{Boissy}, we have   
\begin{eqnarray}
\label{eq:dimension-product}
\dim \calP_1 + \dim \calP_2  & = & (2g_1 - 2 + (k+1)) + (2g_2 - 3 + (n-k+1)) \\ \nonumber
 & = & 2g - 3 + n. 
\end{eqnarray}
Note that $\calP_i$ may have more than one connected component, but all components have the same dimension, hence all irreducible components of $\calP_1 \times \calP_2$ have dimension $2g-3+n$ given by \eqref{eq:dimension-product}. 

Let $\Delta_{g_1; \{1,\ldots, k\}}^{\circ}$ be the open dense subset of the boundary divisor $\Delta_{g_1; \{1,\ldots, k\}}$ that parameterizes curves with only one node, i.e. nodal curves of the same topological type as that of $C$. We have proved that 
$$\BPP(\mu) \cap \Delta_{g_1; \{1,\ldots, k\}}^{\circ} \subset \calP_1 \times \calP_2.$$
Suppose $U = U_1 \times U_2$ is an irreducible component of $\calP_1 \times \calP_2$ that intersects $\BPP(\mu)$ non-empty, where $U_i$ is an irreducible component of $\calP_i$. 
 Then 
\begin{eqnarray*}
\dim (U\cap \BPP(\mu)) & \geq & \dim \BPP(\mu) + \dim \Delta_{g_1; \{1,\ldots, k\}} - \dim \BM_{g,n}\\ 
  & = & \dim \BPP(\mu) - 1 \\ 
  & = & \dim U, 
\end{eqnarray*}
which implies that $U \subset \BPP(\mu)$ by the irreducibility of $U$. 

It remains to show that every irreducible component $U_1\times U_2$ of $\calP_1 \times \calP_2$ intersects $\BPP(\mu)$. We prove it by an explicit procedure, inspired by \cite{Boissy}. Take a translation surface $X = (C_1, z_1, \ldots, z_k, q)$ parameterized in $U_1$, where $q$ is the zero of order $2g_1 - 2 - M_1$. The flat-geometric local neighborhood of $q$ can be obtained by gluing $2g_1 - 1 -M_1$ disks $D_i$, each with a radius slit whose edges are labeled by $A_i$ and $B_i$, where $B_i$ is identified with $A_{i+1}$. Then the centers of the disks are identified as the same point $q$, with desired cone angle $(2g_1 - 1 -M_1)\cdot (2\pi)$.  
Now, expand the boundary of each $D_i$ to infinity such that $D_i$ becomes the standard Euclidean plane, while preserving the gluing pattern of the radius slits. At the same time, inside each $D_i$ remove a small neighborhood $E_i$ around $q$, where the $E_i$'s have suitable polygon boundaries and gluing pattern, such that the newborn $(2g_1 - 1 -M_1)$ broken Euclidean planes are glued to form a (possibly special) element $Y = (C_2, z_{k+1}, \ldots, z_n, q)$ in $U_2$, where the pole sits at the infinity of the $D_i$'s (identified altogether) with pole order $(2g_1 - 1 - M_1) + 1 = - (2g_2 - 2 - M_2)$ by \cite[Section 3]{Boissy}, see Figure~\ref{fig:expand-shrink}. 

 \begin{figure}[h]
    \centering
   \psfrag{q}{$q$}
    \psfrag{Ai}{$A_i$}
    \psfrag{Bi}{$B_i$}
     \psfrag{Di}{$D_i$}
      \psfrag{Ei}{$E_i$}
    \includegraphics[scale=0.7]{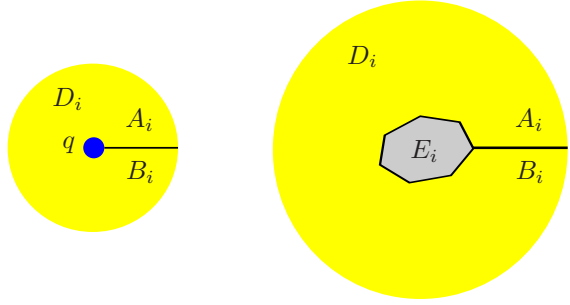}
       \caption{\label{fig:expand-shrink} Remove a neighborhood of a zero and expand to a pole}
\end{figure}

Note that in the intermediate stage when $D_i$ is relatively large compared to $E_i$, but still of finite area, the above operation provides a translation surface in $\HH(\mu)$, with zeros at $z_1, \ldots, z_n$ and no pole. If we shrink each $E_i$ to a point, it gives rise to $X \in U_1$. Alternatively, shrinking $E_i$ is equivalent to expanding $D_i$ modulo scaling, hence it also gives rise to $Y\in U_2$. Therefore, the limit object of this shrinking/expanding process 
consists of $X$ union $Y$ as a stable curve. In other words, we have exhibited  
an explicit family of translation surfaces in $\HH(\mu)$ degenerating to an element in $U_1\times U_2$, thus finishing the proof.
\end{proof}

\begin{example}
Consider a small flat torus $Y$ embedded in a big flat torus $X$ as in Figure~\ref{fig:embedded-tori}. 
 \begin{figure}[h]
    \centering
   \psfrag{X}{$X$}
    \psfrag{Y}{$Y$}
    \psfrag{z}{$z$}
    \includegraphics[scale=0.6]{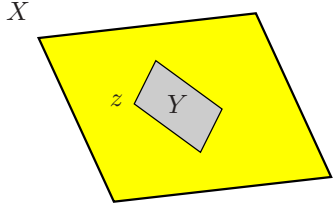}
       \caption{\label{fig:embedded-tori} A small flat torus embedded in a big one}
\end{figure}

The region $X\backslash Y$ represents a translation surface in $\HH(2)$, where the inner vertex $z$ is the double zero. 
Expanding the boundary of $X$ to infinity, we obtain a flat torus $E_1$ in $\HH(2, -2)$, where the double pole is at the infinity (see \cite[Figure 4]{Boissy}). Up to rescaling, this procedure is the same as shrinking $Y$ to a point $q$. We thus obtain another flat torus $E_2$ in $\HH(0)$, with $q$ is marked as the attachment point to  
the double pole of $E_1$. It matches with our theory of twisted canonical divisors applied to this case. 
\end{example}

\begin{example}
Let $C$ consist of two smooth curves $C_1$ and $C_2$, of genus one and two respectively, meeting at a node $q$. Take a Weierstrass point $z_2 \in C_2$ and a $2$-torsion point $z_1 \in C_1$ with respect to $q$, see Figure~\ref{fig:12}. 
 \begin{figure}[h]
    \centering
   \psfrag{C1}{$C_1$}
    \psfrag{C2}{$C_2$}
    \psfrag{q}{$q$}
    \psfrag{z1}{$z_1$}
    \psfrag{z2}{$z_2$}
    \includegraphics[scale=0.7]{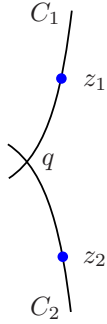}
       \caption{\label{fig:12} An elliptic curve marked at a $2$-torsion union a genus two curve marked at a Weierstrass point}
\end{figure}
In this setting, $2z_1 - 2q \sim K_{C_1}$ and $2z_2  \sim K_{C_2}$, hence 
Theorem~\ref{thm:canonical} implies that $(C, z_1, z_2)\in \BPP(2,2)$. 
\end{example}

Now let us consider curves of compact type with more nodes. Suppose a curve $C$ of compact type has $m+1$ irreducible components 
$C_0, \ldots, C_m$ of genus $g_0, \ldots, g_m$, respectively. Then $C$ possesses $m$ nodes
$q_1, \ldots, q_{m}$. The locus $B$ of such pointed curves in $\BM_{g,n}$ has dimension 
$$\dim B = 3g-3 + n - m. $$ 
As before, define $M_i = \sum_{z_j \in C_i} m_j$ for $i=0, \ldots, m$. We have 
$$\sum_{i=0}^m g_i = g, \quad \sum_{i=0}^g M_i = 2g-2. $$ 

Let $\calX$ be the universal $n$-pointed curve over $B$.
There exists a \emph{unique} twisted canonical line bundle $\calL$, 
 independent of $\calX$, 
such that the restriction $\calL|_{C_i}$ has degree $M_i$ for all $i$. 
Suppose that 
$$\calL|_{C_i} \otimes \OO_{C_i}\left(\sum_{q_j\in C_i} s_{ij} q_j \right) = K_{C_i}  $$ 
for some $s_{ij}\in \bbZ$. In particular, 
$$\sum_{q_j\in C_i} s_{ij} = 2g_i-2 - M_i.$$ 
Recall that $C_i$ is called a polar component of $C$, if at least one $s_{ij} < 0$. Conversely If all 
$s_{ij} \geq 0$, we call $C_i$ a holomorphic component. In other words, a component is holomorphic (resp. polar) 
if the corresponding twisted canonical divisor restricted to it is effective (resp. not effective). 

Suppose that $C$ possesses $N$ polar components. Let $q_j$ be a node connecting two components $C_i$ and $C_k$. 
Then $s_{ij} + s_{kj} = -2$. In particular, if $C_i$ is a \emph{tail} of $C$, i.e. if $C_i$ meets the closure of its complement in $C$ at only one point, 
then $s_{ij} \neq -1$, again because $q_j$ is a base point of $K_{C_i}(q_j)$. 
It immediately follows that 
\begin{eqnarray*}
\label{eq:negative}
N\leq m. 
\end{eqnarray*}
In other words, at least one component of $C$ is holomorphic. 

Consider the extremal case when $N = m$. 

\begin{theorem}
\label{thm:canonical-more}
For $(C, z_1, \ldots, z_n) \in \BM_{g,n}$ in the above setting, if it is contained in $\BPP(\mu)$, then 
\begin{eqnarray}
\label{eq:linear-more}
 \sum_{z_j\in C_i} m_j z_j +  \sum_{q_j\in C_i} s_{ij} q_j  \sim K_{C_i} 
 \end{eqnarray}
holds for all $i$. Conversely, if Relation~\eqref{eq:linear-more} holds and further assume that $C$ has only one holomorphic component, 
then it is contained in $\BPP(\mu)$. 
\end{theorem}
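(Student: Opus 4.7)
The plan is to treat the two directions separately, generalizing the arguments from the one-node case of Theorem~\ref{thm:canonical}.

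For the forward direction I would extend the proof of Proposition~\ref{prop:limit-canonical} to the multi-component setting. Let $\pi\colon \calX\to T$ be a smoothing family with central fiber $C$, and choose integers $a_0,\ldots,a_m$ so that the line bundle
\[
\calL := \omega_{\calX/T}\otimes \OO_\calX\Big(\sum_i a_i C_i - \sum_j m_j Z_j\Big)
\]
restricts to degree $0$ on every $C_i$; the corresponding linear system is essentially the Laplacian of the dual graph, which has rank $m$ because $C$ is of compact type (the graph being a tree), and the right-hand side sums to $0$, so a solution exists. On the generic fiber $\calL$ is trivial, hence $h^0(\calL|_C)\geq 1$ by semicontinuity. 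Since the dual graph is a tree and $\calL|_{C_i}$ has degree $0$ on each component, any section vanishing on some $C_i$ must vanish at its separating nodes, hence on every neighboring component, and by connectedness everywhere on $C$. So any nonzero section is nowhere vanishing, forcing $\calL|_{C_i}\cong\OO_{C_i}$ for every $i$, which unwinds to Relation~\eqref{eq:linear-more}.

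For the converse, assume that $C_0$ is the unique holomorphic component and $C_1,\ldots,C_m$ are the polar ones, and let $\calP_i$ denote the (holomorphic if $i=0$, meromorphic otherwise) stratum on $C_i$ with prescribed orders $m_j$ at the $z_j\in C_i$ and $s_{ij}$ at the nodes of $C_i$. Writing $n_i$ for the total number of zeros, poles, and markings on $C_i$ (so $\sum_i n_i = n+2m$), the dimension formulas from Section~\ref{subsec:strata} together with \cite[Lemma~3.6]{Boissy} yield
\[
\sum_{i=0}^m \dim\calP_i = (2g_0+n_0-2) + \sum_{i=1}^m (2g_i+n_i-3) = 2g+n-2-m,
\]
which matches $\dim\BPP(\mu)-m$, the expected dimension of $\BPP(\mu)\cap \Delta_C^\circ$, where $\Delta_C^\circ\subset \BM_{g,n}$ parameterizes stable curves of the same topological type as $C$. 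By the forward direction, $\BPP(\mu)\cap \Delta_C^\circ\subset \prod_i\calP_i$, so the matching dimensions force every irreducible component $U_0\times\cdots\times U_m$ of $\prod_i\calP_i$ that meets $\BPP(\mu)$ nontrivially to be fully contained in $\BPP(\mu)$. It therefore suffices to exhibit at least one point of $\BPP(\mu)$ in each such component.

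Producing this point is the main obstacle, and I would carry out the flat-geometric expand/shrink construction from the proof of Theorem~\ref{thm:canonical} simultaneously at every node of $C$. At each node $q_j$ separating components $C_i$ and $C_k$, the identity $s_{ij}+s_{kj}=-2$ together with the forbidden value $s_{ij}=-1$ forces one side to carry a zero of order $\geq 0$ and the other a matching pole of order $\geq 2$, which is exactly the local configuration smoothed in the $m=1$ argument. Since each such operation is supported in a small neighborhood of its node, the operations at distinct nodes do not interact, so applying them in parallel starting from flat surfaces $(X_i,\omega_i)\in U_i$ produces a multi-parameter family of flat surfaces whose limit (as all parameters vanish) is the nodal configuration $\bigcup_i X_i$ and whose generic member is a smooth curve in $\calP(\mu)$ of signature $\mu$. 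The role of the \emph{unique} holomorphic component hypothesis is precisely to guarantee that the only global residue constraint arising from this simultaneous plumbing is the automatic one on $C_0$: with more than one holomorphic component, additional nontrivial residue equations would appear as obstructions to smoothability, and the argument could fail.
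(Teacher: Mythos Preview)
Your forward direction matches the paper: both defer to the argument of Proposition~\ref{prop:limit-canonical}, extended componentwise.

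For the converse the paper takes a cleaner route than yours. It argues by \emph{induction on the number of components}: since a tree has at least two leaves and only one component is holomorphic, some tail $X$ is polar; one sets $Y=\overline{C\setminus X}$, observes that treating the node $q=X\cap Y$ as a new marked zero of order $s_Y\geq 0$ on $Y$ leaves $Y$ with the same unique holomorphic component, applies the induction hypothesis to place $Y$ in the appropriate $\BPP(\mu_Y,s_Y)$, and then invokes Theorem~\ref{thm:canonical} for the single gluing $X\cup_q Y$. No new dimension count or flat construction is needed beyond the one-node case already proved.

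Your direct dimension count is correct and gives a pleasant check that $\sum_i\dim\calP_i=\dim\BPP(\mu)-m$, but two steps in the flat-geometric part are underjustified. First, the claim that $s_{ij}\neq -1$ at every node is true under the one-holomorphic-component hypothesis, but it requires an argument: orient each node toward the unique holomorphic component $C_0$ and peel leaves of the subtree on the far side (each such leaf is a polar tail, hence has $s\leq -2$ by the base-point argument for $K_{C_l}(q)$); inductively every peeled node contributes a nonnegative marking, so when only $C_k$ remains its sole possible pole is at $q_j$, forcing $s_{kj}\leq -2$. Second, the assertion that ``operations at distinct nodes do not interact'' is not accurate for interior polar components, which carry a pole at their node toward $C_0$ and zeros at their nodes away from $C_0$; the expand/insert gluings must therefore be performed outward from $C_0$ in tree order, which is exactly the paper's induction. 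So your approach is sound in spirit, but once the details are filled in it collapses to the inductive argument.
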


\begin{proof}
The first part of the result follows from the same proof as in 
Proposition~\ref{prop:limit-canonical}. For the other part, by assumption there exists at least one tail of $C$, denoted by $X$, which is a polar component. Let $Y = \overline{C\backslash X}$ and $q = X\cap Y$. Suppose $s_X < 0$ and $s_Y\geq 0$ are the signatures of the twisted canonical line bundle 
restricted to $X$ and to $Y$ at $q$. Suppose the genera of $X$ and $Y$ are $g_X$ and $g_Y$, respectively, and the vanishing orders of
the marked points in $X$ and in $Y$ are of signature $\mu_X$ and $\mu_Y$, respectively. We have $g = g_X + g_Y$, 
$\mu = (\mu_X, \mu_Y)$, and $s_X + s_Y = -2$. By Theorem~\ref{thm:canonical}, every one-nodal curve in $\calP(\mu_X, s_{X}) \times \calP(\mu_Y, s_Y)$ can be smoothed into 
$\calP(\mu)$. Now treat $q$ as a marked point in $Y$ with signature $s_Y\geq 0$. By assumption, all components of $Y$ but one are polar, hence the desired result follows from induction on the number of components of the underlying curve. 
\end{proof}

\begin{remark}
\label{rem:hodge-pseudo}
Theorem~\ref{thm:canonical-more} also holds for $C$ of pseudocompact type under the same assumption, by treating $K_{C_i}$ as the dualizing line bundle of $C_i$ and taking the $q_j$'s from the set of separating nodes of $C$ only. The same proof works for the first part as explained in Remark~\ref{rem:twisted-pseudo}. To see the smoothability part, note that at the two branches of a self-node the (stable) differential has simple poles. Therefore, one can first smooth out all self-nodes by the plumbing operation as in the proof of Theorem~\ref{thm:hodge}, and then reduce it to the case of compact type.  
\end{remark}

\begin{example}
Suppose a smooth curve $C_0$ is attached to $m$ tails $C_1, \ldots, C_m$ as in Figure~\ref{fig:backbone}. 
 \begin{figure}[h]
    \centering
   \psfrag{C0}{$C_0$}
    \psfrag{C1}{$C_1$}
    \psfrag{C2}{$C_2$}
     \psfrag{Cm}{$C_m$}
    \includegraphics[scale=0.8]{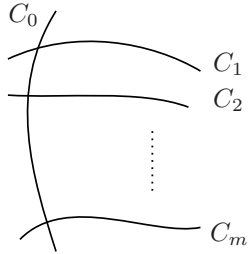}
       \caption{\label{fig:backbone} A curve with $m$ tails}
\end{figure}

Suppose further that the zero orders of marked points in 
$C_i$ add up to $d_i \geq 2g_i$ for $i>0$ and that they satisfy Relation~\eqref{eq:linear-more}. 
Then $C_i$ is a polar component for $i=1, \ldots, m$. Therefore, by 
Theorem~\ref{thm:canonical-more} this pointed curve is contained in $\BPP(\mu)$. 
\end{example}

\begin{example}
Let $C$ be a chain of elliptic curves $E_1, \ldots, E_g$ such that $q_i = E_i \cap E_{i+1}$ for $i=1,\ldots, g-1$. Suppose that $E_g$ contains a marked point $z$, see Figure~\ref{fig:elliptic-chain}.  
 \begin{figure}[h]
    \centering
   \psfrag{E1}{$E_1$}
    \psfrag{E2}{$E_2$}
    \psfrag{Eg-1}{$E_{g-1}$}
     \psfrag{Eg}{$E_g$}
      \psfrag{z}{$z$}
    \psfrag{q1}{$q_1$}
     \psfrag{qg-1}{$q_{g-1}$}
    \includegraphics[scale=1.0]{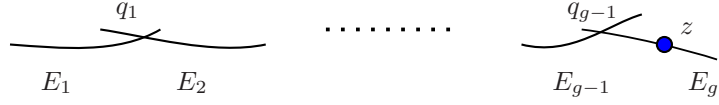}
       \caption{\label{fig:elliptic-chain} A chain of elliptic curves} 
 \end{figure}      

Further suppose that $(2g-2)z \sim (2g-2)q_{g-1}$ in $E_g$, $(2g-4)q_{g-1} \sim (2g-4)q_{g-2}$ in $E_{g-1}, \ldots, 2q_{2} \sim 2q_{1}$ in $E_2$. In this setting, $E_1$ is the only holomorphic component, and Relation~\eqref{eq:linear-more} holds on every component of $C$. By Theorem~\ref{thm:canonical-more}, we conclude that $(C, z) \in \BPP(2g-2)$. 
\end{example}

We remark that for a pointed curve of pseudocompact type with more than one separating node, Relation~\eqref{eq:linear-more} may fail to be sufficient for the containment in $\BPP(\mu)$, if the curve has more than one holomorphic component (see Example~\ref{ex:dimension-bound}). 

\subsection{Twisted meromorphic canonical divisors}
\label{subsec:meromorphic}

Take a sequence of integers $\mu = (k_1, \ldots, k_r, - l_1, \ldots, -l_s)$ such that $k_i \geq 0$, $l_j > 0$, 
$\sum_{i=1}^r k_i - \sum_{j=1}^s l_j = 2g-2$ and $s > 0$, i.e. we consider meromorphic differentials with at least one pole. We use $z_i$ to denote a zero of order $k_i$, and $p_j$ for a pole of order $l_j$. For pointed nodal curves of pseudocompact type, the notion of twisted canonical divisors can be defined in the same way in the meromorphic setting, by allowing sections of marked points to have negative coefficients if they correspond to poles. Below we establish similar dimension bounds for components of space of twisted meromorphic canonical divisors as well as smoothability results.   

\begin{theorem}
\label{thm:scheme-twisted-meromorphic}
Let $\pi: \calX\to B$ be a smoothing family of genus $g$ curves of compact type with $n=r+s$ sections $z_1, \ldots, z_r, p_1, \ldots, p_s$. 
Then there exists a variety $\calP(\calX/B; \mu)$ over $B$, compatible with base change, whose point over any $q$ in $B$ (if not empty) corresponds to a twisted meromorphic canonical divisor given by $k_1 z_1(q) + \cdots + k_r z_r (q) - l_1 p_1(q) - \cdots - l_s p_s(q)$. Furthermore, every irreducible component of $\calP(\calX/B; \mu)$ has dimension $\geq \dim B - g$. 
\end{theorem}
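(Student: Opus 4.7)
The plan is to follow the determinantal construction in the proof of Theorem~\ref{thm:scheme-twisted}, modifying the auxiliary line bundle so that sections are allowed to acquire poles of prescribed order at the sections $p_j$. As in the holomorphic case, the argument is local on $B$, compatible with base change, and by the standard reduction (splitting the family at smoothed nodes, as in \cite[p. 355]{EisenbudHarrisLimit}) it suffices to treat two cases: (i) no nodes of fibers of $\pi$ are smoothed, in which case $\calP(\calX/B;\mu)$ is constructed fiberwise as a union of products of strata $\calP(\mu^{(i)})$ on the components, with signatures $\mu^{(i)}$ determined by the unique twisted canonical line bundle of multi-degree $M_i=\sum_{z_j\in C_i}k_j-\sum_{p_h\in C_i}l_h$; and (ii) the general fiber of $\calX/B$ is smooth, which is the case where the dimension bound requires real work.

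In case (ii), assume $B$ is affine and let $\omega_\mu$ denote the (unique, up to a line bundle pulled back from $B$) twisted relative canonical line bundle whose restriction to each component $C_i$ of every fiber has degree $M_i$; its existence is justified as in \cite[p.~359]{EisenbudHarrisLimit}. Choose a relatively ample divisor $D$ on $\calX$, contained in the smooth locus of $\pi$ and disjoint from every section $Z_i$ and $P_j$, replacing $D$ by a sufficiently high multiple of total relative degree $d$ so that $D$ meets every component of every reducible fiber in high degree. Now set
\[
L \;=\; \omega_\mu \otimes \OO_\calX(D) \otimes \OO_\calX\!\left(\sum_{j=1}^{s} l_j P_j\right),
\]
whose restriction to each fiber has degree $2g-2+d+\sum_j l_j$. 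For $D$ sufficiently positive, $R^1\pi_* L=0$ and $\pi_* L$ is a vector bundle on $B$ of rank $g-1+d+\sum_j l_j$ by Riemann--Roch. Let $\calP'=\mathbb{P}(\pi_* L)$, so the fibers of $\calP'\to B$ have dimension $g-2+d+\sum_j l_j$. A point of $\calP'$ over $b\in B$ is a nonzero section $\sigma$ of $L|_{X_b}$ up to scaling; viewed as a meromorphic section of $\omega_\mu|_{X_b}$, it satisfies $(\sigma)_{\omega_\mu}\ge -D-\sum_j l_j P_j$.

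Inside $\calP'$, define the determinantal locus $\calP'(\calX/B;\mu)$ by the closed conditions that $\sigma$ vanishes on $D$ (contributing $d$ equations) and that $\sigma$ vanishes to order at least $k_i$ at each section $Z_i$ (contributing $\sum_i k_i$ equations). Using $\sum_i k_i-\sum_j l_j = 2g-2$, the expected codimension in $\calP'$ is $d+\sum_i k_i$, so every irreducible component of $\calP'(\calX/B;\mu)$ has relative dimension at least
\[
(g-2+d+\sum_j l_j)-d-\sum_i k_i \;=\; g-2-(2g-2)\;=\;-g,
\]
that is, absolute dimension at least $\dim B-g$. Finally, $\calP(\calX/B;\mu)$ is extracted from $\calP'(\calX/B;\mu)$ by (a) discarding those irreducible components on which the generic $\sigma$ vanishes identically on an entire component of the underlying curve, and (b) restricting to the open subset where $\sigma$, viewed in $L$, does not vanish at any $P_j$---equivalently, where the pole order of $\sigma$ as a meromorphic section of $\omega_\mu$ is exactly $l_j$ at $p_j$. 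On the remaining locus, the divisor of $\sigma$ as a section of $\omega_\mu$ is precisely $\sum_i k_i Z_i-\sum_j l_j P_j+D$; subtracting the fixed part $D$ recovers a twisted meromorphic canonical divisor of signature $\mu$, and conversely every such divisor arises uniquely up to scaling in this way. Since discarding the excluded components does not lower the dimension of the remaining ones, the bound $\dim \calP(\calX/B;\mu)\ge \dim B-g$ is preserved.

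The main obstacle is the same subtle point as in the holomorphic case: ensuring that the components of $\calP'(\calX/B;\mu)$ on which $\sigma$ has isolated zeros and the sharp pole orders truly correspond to twisted meromorphic canonical divisors of signature $\mu$, rather than degenerate objects where $\sigma$ vanishes on an entire component or where the pole order at some $P_j$ drops. The ``no vanishing on a component'' step is handled exactly as in Theorem~\ref{thm:scheme-twisted} (this is a condition stable under specialization within each irreducible component), while the ``exact pole order'' step uses that the locus $\{\sigma(P_j)\ne 0\text{ in }L\}$ is open and that Riemann--Roch (together with the assumption $s>0$, so that $\omega_\mu(\sum_j l_j P_j)$ is nontrivially meromorphic on every holomorphic component) guarantees this locus is nonempty on each surviving component, exhibiting $\calP(\calX/B;\mu)$ as a well-defined variety satisfying the desired dimension lower bound.
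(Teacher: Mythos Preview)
Your proposal is correct and follows essentially the same approach as the paper's proof: the twisted line bundle $L=\omega_\mu(D+\sum_j l_jP_j)$, the projective bundle $\calP'=\bbP(\pi_*L)$, the determinantal conditions (vanishing along $D$ and to order $k_i$ at $Z_i$), and the dimension count $\dim B+(g-2+d+\sum_j l_j)-d-\sum_i k_i=\dim B-g$ all match exactly.

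The one extraneous ingredient is your step~(b), restricting to the locus where $\sigma$ does not vanish at any $P_j$. This is in fact automatic once you discard components where $\sigma$ vanishes identically on some $C_i$: on each component $C_i$ the line bundle $L|_{C_i}$ has degree $\sum_{z_j\in C_i}k_j+d_i$ (since $\omega_\mu|_{C_i}$ has degree $M_i=\sum_{z_j\in C_i}k_j-\sum_{p_h\in C_i}l_h$), and the imposed vanishing at the $z_j$ and along $D\cap C_i$ already accounts for all of that degree. So there is no room for $\sigma$ to vanish at any $p_h$, and the paper simply writes $(\sigma)_0=\sum_i k_iz_i+D$ directly. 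Your justification in the last paragraph (invoking $s>0$ and Riemann--Roch to argue the open locus is nonempty) is therefore unnecessary and a bit off-target, though harmless.
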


\begin{proof}
As explained in the proof of Theorem~\ref{thm:scheme-twisted}, it suffices to prove the result when the general fiber of $\calX$ is smooth. Since the problem is local on $B$, we assume that $B$ is affine. Take a relatively ample divisor $D$ on $\calX$ such that $D$ is contained in the smooth locus of $\pi$ and disjoint from the sections $z_i$ and $p_j$. Replacing $D$ with a high multiple of itself, we may assume that it intersects every component of a reducible fiber with high degree. Denote by $d$ the total degree of $D$ relative over $B$. 

Let $\omega_{\mu}$ be a twisted relative canonical line bundle on $\calX$ such that restricted to each fiber 
$(C, z_1, \ldots, z_r, p_1, \ldots, p_s)$ it is the unique twisted canonical line bundle of degree 
$\sum_{z_j\in C_i} k_j - \sum_{p_h\in C_i} l_h$ on every component 
$C_i$ of $C$. It follows that 
$$\pi_{*} \omega_{\mu}\left( \sum_{j=1}^s l_j p_j + D\right)$$ 
is a vector bundle of rank 
$$1-g+ \left(2g-2 + \sum_{j=1}^s l_j + d\right) = g-1 +d + \sum_{j=1}^s l_j $$ 
by Riemann-Roch. Let $\calP'$ be the corresponding projective bundle with fiber dimension $g-2 + d + \sum_{j=1}^s l_j $ over $B$. A point of $\calP'$ over $q' \in B$ is thus a section 
$\sigma \in H^0( \omega_{\mu}(D+ \sum_{j=1}^s l_j p_j)|_{X_{q'}})$, modulo the equivalence $\sigma \sim \lambda \sigma$ for a nonzero scalar $\lambda$. 

Consider the subvariety $\calP'(\calX/B; \mu)$ in $\calP'$ cut out by the following groups of equations: 
\begin{itemize}
\item \emph{Vanishing on $D$}.  We require that $\sigma$ vanishes on $D$. 
\item \emph{Ramification at $z_i$}. For each section $z_i$, $\sigma$ vanishes on $z_i$ with multiplicity $\geq k_i$. 
\end{itemize}
 
The vanishing condition on $D$ is given by $d$ equations. The ramification condition on each $z_i$ imposes $k_i$ equations, so in total they impose $\sum_{i=1}^r k_i = 2g-2 + \sum_{j=1}^s l_j$ equations. It follows that the dimension of every irreducible component of $\calP'(\calX/B; \mu)$ is at least 
\begin{displaymath}
 \dim B + \left(g-2 + d +\sum_{j=1}^s l_j\right) - d - \left(2g-2 + \sum_{j=1}^s l_j\right) = \dim B - g. 
 \end{displaymath}
Let $U$ be an irreducible component of $\calP'(\calX/B; \mu)$. For a general point $\sigma \in U$, there are two possibilities. First, if $\sigma$ vanishes on a component of the underlying curve $C$, then this property holds for all sections parametrized in $U$. The other case is that $\sigma$ has isolated zeros in 
$C$. Then it follows from construction that 
$$(\sigma)_0 = \sum_{i=1}^r k_i z_i + D \sim \omega_{\mu} + D + \sum_{j=1}^s l_j p_j, $$ 
hence $\sum_{i=1}^r k_i z_i -  \sum_{j=1}^s l_j p_j$ is a twisted meromorphic canonical divisor restricted to $C$. 
Conversely, a twisted meromorphic canonical divisor determines such a $\sigma$ up to scaling. By collecting irreducible components of $\calP'(\calX/B; \mu)$ of the latter type, we thus obtain the desired $\calP(\calX/B; \mu)$ parameterizing twisted meromorphic canonical divisors with signature $\mu$. 
\end{proof}

\begin{proposition}
\label{prop:meromorphic-smoothing}
In the above setting, suppose $(C, z_1, \ldots, z_r, p_1, \ldots, p_s)\in B$ is contained in an irreducible component $U$ of $\calP(\calX/B; \mu)$ such that $\dim U = \dim B - g$, i.e. the dimension bound is attained. Then $(C, z_1, \ldots, z_r, p_1, \ldots, p_s)\in \BPP(\mu)$. 
\end{proposition}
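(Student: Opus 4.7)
The plan is to mirror the proof of Proposition~\ref{prop:smoothing}, replacing the refined dimension bound used there with the bound $\dim B - g$ supplied by Theorem~\ref{thm:scheme-twisted-meromorphic}. The smoothability should follow from the fact that $U$ is already of the minimum possible dimension for a component of $\calP(\calX/B; \mu)$, so any extension of it in the versal deformation cannot be trapped in the discriminant locus.

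First, I would pass to the versal deformation $\wt{\calX} \to \wt{B}$ of the pointed curve $(C, z_1, \ldots, z_r, p_1, \ldots, p_s)$, factoring the classifying map of $\calX \to B$ locally as $f \colon B \to \wt{B}$. By the compatibility of $\calP(\,\cdot\,; \mu)$ with base change asserted in Theorem~\ref{thm:scheme-twisted-meromorphic}, the component $U$ arises inside the pullback $f^{*}\wt{U}$ for some irreducible component $\wt{U}$ of $\calP(\wt{\calX}/\wt{B}; \mu)$ passing through the point of $\wt{B}$ representing $C$. Applying the same theorem to the versal family gives
$$\dim \wt{U} \geq \dim \wt{B} - g.$$

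Next, I would case-split on whether $\wt{U}$ lies in the discriminant of $\wt{\calX}/\wt{B}$. If not, then a general point of $\wt{U}$ parameterizes a smooth genus $g$ curve with an honest meromorphic canonical divisor of signature $\mu$; hence $\wt{U}$ meets $\calP(\mu)$ in a dense open subset, and the special fiber $(C, z_1, \ldots, z_r, p_1, \ldots, p_s)$ is a limit of points in $\calP(\mu)$, giving the desired conclusion. Otherwise, $\wt{U}$ lies entirely over some component $\wt{B}'$ of the discriminant, which is a hypersurface in $\wt{B}$; so $\dim \wt{B}' = \dim \wt{B} - 1$ and the above bound forces $\dim \wt{U} \geq \dim \wt{B} - g > \dim \wt{B}' - g$, exceeding the expected dimension of a component of the restricted variety $\calP(\wt{\calX}|_{\wt{B}'}/\wt{B}'; \mu)$. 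Pulling back through $f$ and applying the fiber-product dimension inequality, each component of $f^{*}\wt{U}$ (in particular $U$) inherits dimension strictly greater than $\dim B - g$, contradicting the hypothesis $\dim U = \dim B - g$.

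The main obstacle I anticipate is the careful verification that the dimension excess on the versal side transfers to a strict excess on $U$ through the pullback along $f$. This relies on $f$ being the classifying map of a genuine deformation, so that the nodal locus $f^{-1}(\wt{B}')$ in $B$ is itself a hypersurface and the fiber-product components behave dimensionally as expected. The meromorphic twist by the polar sections $p_j$ (entering via the line bundle $\omega_{\mu}\bigl(\sum_j l_j p_j + D\bigr)$ in the proof of Theorem~\ref{thm:scheme-twisted-meromorphic}) does not disturb this argument, because it contributes equally to both the rank of the pushforward and the number of ramification conditions imposed, leaving the bound $\dim B - g$ intact under base change.
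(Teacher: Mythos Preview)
Your proposal is correct and follows essentially the same argument as the paper's own proof: pass to the versal family, apply the dimension bound of Theorem~\ref{thm:scheme-twisted-meromorphic} to the component $\wt{U}$, and derive a contradiction from $\dim \wt{U} \geq \dim \wt{B} - g > \dim \wt{B}' - g$ if $\wt{U}$ were trapped in the discriminant hypersurface $\wt{B}'$. The paper handles the pullback step you flag as a potential obstacle in a single sentence, simply asserting that every component of $f^{*}\wt{U}$ then has dimension $> \dim B - g$; your more careful remarks about this step are reasonable but not strictly needed here.
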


\begin{proof}
Let $\wt{\calX} \to \wt{B}$ be the versal family of curves around $(C, z_1, \ldots, z_r, p_1, \ldots, p_s)$. Let $f: B \to \wt{B}$ be the map 
locally inducing $\calX \to B$ with $n = r+s$ sections of marked points. Let $\wt{U}$ be a component of $\calP(\wt{\calX}/ \wt{B}; \mu)$ such that 
$U$ is a component of $f^{*}\wt{U}$ and let $\wt{C}$ be the point in $\wt{U}$ corresponding to the 
pointed curve $C$. By Theorem~\ref{thm:scheme-twisted-meromorphic} we have 
$$\dim \wt{U} \geq \dim \wt{B} - g.$$ 

If $\wt{U}$ does not lie entirely in the discriminant locus of $\wt{\calX}/\wt{B}$ parameterizing nodal curves, then it implies that a general point 
of $\wt{U}$ parameterizes an ordinary meromorphic canonical divisor with signature $\mu$ on a smooth curve, and hence we are done. Suppose on the contrary 
that $\wt{U}$ lies over a component $\wt{B}'$ of the discriminant locus. Note that $\wt{B}'$ 
is a hypersurface in $\wt{B}$, hence 
$$\dim \wt{U} \geq \dim \wt{B} - g > \dim \wt{B}' - g.$$
Thus every component of $f^{*}\wt{U}$, including $U$, has dimension $> \dim B - g$, contradicting the assumption 
that  $\dim U = \dim B - g$. 
\end{proof}

Now we consider twisted meromorphic canonical divisors on curves of compact type. Suppose $C$ is a pointed curve of compact type with $m+1$ irreducible components $C_0, \ldots, C_m$ of genus $g_0, \ldots, g_m$, respectively, and with $q_1, \ldots, q_{m}$ as the nodes. Let $z_1, \ldots, z_r, p_1, \ldots, p_s$ be the marked points in $C$ with respect to the signature $\mu = (k_1, \ldots, k_r, - l_1, \ldots, -l_s)$. Denote by 
$M_i$ the sum of zero orders minus pole orders for the zeros $z_{j_i}$ and poles $p_{h_i}$ contained in each $C_i$. Since $C$ is of compact type, 
there exists a unique twisted canonical line bundle whose restriction to each $C_i$ has degree $M_i$ and is of type 
$K_{C_i}(\sum_{j=1}^{m} t_{ij} q_j )$ for $t_{ij}\in \bbZ$. Recall that $C_i$ is a polar component if either $C_i$ contains some $p_j$, 
or contains some $q_j$ with $t_{ij} > 0$. 

\begin{theorem}
\label{thm:twisted-meromorphic}
In the above setting, if $C\in \BPP(\mu)$, then we have 
\begin{eqnarray}
\label{eq:twisted-meromorphic}
 \sum_{z_j\in C_i} k_j z_j - \sum_{p_h \in C_i} l_h p_h - \sum_{j=1}^{m} t_{ij} q_j \sim K_{C_i} 
\end{eqnarray}
for all $i$. Conversely if Relation~\eqref{eq:twisted-meromorphic} holds and further assume that all $C_i$ are polar components, then 
$C\in \BPP(\mu)$. 
\end{theorem}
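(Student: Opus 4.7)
The plan is to mirror the argument of Theorem~\ref{thm:canonical-more}, replacing each ingredient with its meromorphic counterpart. For the necessity of \eqref{eq:twisted-meromorphic}, I would argue exactly as in Proposition~\ref{prop:limit-canonical}: take a smoothing family $\pi\colon \calX\to T$ with central fiber $C$, and form the line bundle
\[
\calL := \omega_{\calX/T}\otimes \OO_{\calX}\!\left(\sum_i c_i C_i\right)\otimes \OO_{\calX}\!\left(-\sum_{j=1}^r k_j Z_j + \sum_{h=1}^s l_h P_h\right),
\]
where $Z_j$ and $P_h$ are the sections of the zeros and poles, and the integers $c_i$ are chosen so that $\calL|_{C_i}$ has degree zero for every $i$. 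On each smooth nearby fiber, $\sum k_j z_j(t) - \sum l_h p_h(t)$ is an honest meromorphic canonical divisor of signature $\mu$, so $\calL$ restricts to $\OO$ generically; semicontinuity together with the connectedness argument used in the holomorphic case then forces $\calL|_{C_i}\cong \OO_{C_i}$ for every $i$, which is exactly \eqref{eq:twisted-meromorphic} after translating the twist into the coefficients $t_{ij}$ at the nodes.

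For the converse under the hypothesis that every $C_i$ is polar, I would induct on the number $m+1$ of components, with base case $m+1=2$ supplied directly by Theorem~\ref{thm:main-meromoprhic-one-node}. In the inductive step, choose a tail $X$ with adjacent component $C_k\subset Y:=\overline{C\setminus X}$ meeting $X$ at the node $q$, and set $s_Y := -t_{k,q}$. Because $t_{X,q}+t_{k,q}=2$, the marked orders on $(Y,q)$ (treating $q$ as a new marked point of signed order $s_Y$) sum to $2g_Y-2$, and the linear equivalence on $C_k$ inherited from \eqref{eq:twisted-meromorphic} continues to hold once $-t_{k,q}\,q$ is re-read as $s_Y q$. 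The only component of $Y$ whose polar status could be disturbed by re-marking $q$ is $C_k$, and a short case analysis shows it stays polar: if $C_k$ was polar on account of an original $p_h$ or a positive $t_{k,q'}$ at some node $q'\neq q$, this carries over; otherwise $C_k$ was polar only because $t_{k,q}>0$, so $s_Y<0$ and the new marked point $q$ is itself a pole.

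The inductive hypothesis then gives $(Y,q)\in \BPP(\mu_Y,s_Y)$, so there is a one-parameter family of smooth $(Y_\epsilon,q_\epsilon)\in \calP(\mu_Y,s_Y)$ degenerating to $(Y,q)$. Gluing $X$ to each $Y_\epsilon$ at $q=q_\epsilon$ produces a one-nodal curve $C_\epsilon = X\cup_q Y_\epsilon$ whose two smooth components satisfy \eqref{eq:twisted-meromorphic}; invoking Theorem~\ref{thm:main-meromoprhic-one-node} when both sides are polar, and otherwise the flat-geometric expansion/contraction construction from the proof of Theorem~\ref{thm:canonical} (which uses only Boissy's local model for poles and applies verbatim to meromorphic signatures) gives $C_\epsilon\in \BPP(\mu)$. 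Passing to the limit $\epsilon\to 0$ yields $C\in \BPP(\mu)$. The main obstacle will be the polarity bookkeeping in the inductive step, together with selecting the appropriate one-nodal smoothing theorem at each gluing; once this case analysis is carried out carefully the induction closes in a uniform way.
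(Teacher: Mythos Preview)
Your necessity argument matches the paper's exactly. For the converse, however, you take a genuinely different route. The paper does \emph{not} induct on the number of components as in Theorem~\ref{thm:canonical-more}; instead it argues directly via a dimension count. Letting $B\subset\BM_{g,n}$ be the locus of curves with the same topological type as $C$, the hypothesis that every $C_i$ is polar means each factor of $\calP(\calX/B;\mu)$ is an honest stratum of \emph{meromorphic} canonical divisors, so
\[
\dim \calP(\calX/B;\mu)=\sum_{i=0}^m (2g_i-3)+r+s+2m=2g-3+n-m=\dim B-g,
\]
and Proposition~\ref{prop:meromorphic-smoothing} immediately yields $C\in\BPP(\mu)$. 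This is why the ``all polar'' hypothesis is exactly right here: it makes every local dimension formula the meromorphic one, and the count closes on the nose with the \emph{unrefined} bound of Theorem~\ref{thm:scheme-twisted-meromorphic}. Your inductive approach also works, but note two points. First, in the paper's logical order Theorem~\ref{thm:main-meromoprhic-one-node} is deduced \emph{from} Theorem~\ref{thm:twisted-meromorphic}, so if you want to use it as a black box you must supply an independent proof of the one-node case (the same dimension count with $m=1$ suffices). Second, your ``otherwise'' branch never occurs: once you have checked that every component of $Y$ is polar, the signature $(\mu_Y,s_Y)$ is automatically meromorphic (this is the $N\le m$ observation preceding Theorem~\ref{thm:canonical-more}), so $Y_\epsilon$ is polar in $C_\epsilon$ and Theorem~\ref{thm:main-meromoprhic-one-node} always applies. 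The paper's argument buys a one-line proof that avoids this bookkeeping entirely; yours has the virtue of being more constructive and parallel to the holomorphic case.
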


\begin{proof}
The first part of the result follows from the same proof as Proposition~\ref{prop:limit-canonical}. Conversely, suppose Relation~\eqref{eq:twisted-meromorphic} holds. Let $B\subset \BM_{g,n}$ be the locus of nodal curves that are of the same topological type 
as $C$. It is an open dense subset of the corresponding boundary stratum of $\BM_{g,n}$, where $n=r+s$, hence 
$$\dim B = \dim \BM_{g,n} - m = 3g- 3 + n - m.$$ 
Consider the space of twisted 
meromorphic canonical divisors with signature $\mu$ over $B$. Restricted to each $C_i$, it is the stratum of ordinary meromorphic canonical divisors 
with zeros at $z_{j_i}$ and poles at $p_{h_i}$, and have zero or pole of order $-t_{ij}$ at $q_j$. By assumption $C_i$ is polar, hence it contains at least one pole. Therefore, using the dimensions of strata of meromorphic differentials, we have 
\begin{eqnarray*}
\dim \calP(\calX/B; \mu) & = & \sum_{i=0}^m (2g_i-3) + r + s + 2m \\
                                       & = & 2g -3 + n - m \\
                                       & = & \dim B - g.  
\end{eqnarray*}
The desired smoothability conclusion thus follows from Proposition~\ref{prop:meromorphic-smoothing}. 
\end{proof}

\begin{remark}
\label{rem:hodge-pesudo-meromorphic}
Analogous to Remark~\ref{rem:hodge-pseudo}, Theorem~\ref{thm:twisted-meromorphic} also holds for $C$ of pseudocompact type under the same assumption, by treating $K_{C_i}$ as the dualizing line bundle of $C_i$ and only taking the separating nodes $q_j$ into account.  
 Again, the first part follows from the explanation of Remark~\ref{rem:twisted-pseudo}, and the smoothability part follows from smoothing out all self-nodes by the plumbing operation to reduce to the case of compact type.  
\end{remark}

Note that Theorem~\ref{thm:main-meromoprhic-one-node} is a special case of the above result for curves of compact type with one node.  

\subsection{Curves of non-pseudocompact type}
\label{subsec:non-compact}

Let us study the closure of $\calP(\mu)$ in the locus of curves of non-pseudocompact type. It is possible to extend the notion of twisted canonical divisors to a curve of non-pseudocompact type, but we need to consider \emph{semistable models} of the curve by inserting chains of rational curves at a non-separating and external node, as already considered in 
the study of limit linear series (see e.g. \cite{EstevesMedeiros, OssermanNoncompact}). We explain this issue in detail in the following example. 

Some setup first. Let $\pi: \calX \to T$ be a one-parameter family of genus $g$ curves over a disk such that the generic fiber $C_t$ is smooth and the central fiber $C$ is nodal. We say that $\calX$ is a \emph{smoothing} of $C$. If $\calX$ is smooth everywhere but possibly at those nodes of $C$ that are self-intersections of each of its irreducible components, we say that $\calX$ is a \emph{regular smoothing}. The reason to consider 
regular smoothing families is that every irreducible component of $C$ is a \emph{Cartier} divisor on such $\calX$. In particular, if $C$ is of compact type, then every regular smoothing family is smooth everywhere. The reader can refer to \cite[Section 1.3]{EstevesMedeiros} for more details on this setting. 

Now, let $C_1$ be a nodal curve consisting of $X$ and $R$ union at two nodes $q_1$ and $q_2$, where $X$ is smooth of genus two and $R\cong \bbP^1$. Let $z$ be a marked point contained in $R$. Therefore, $(C_1, z)$ is a stable pointed curve parameterized in $\BM_{3,1}$. Furthermore, assume that $q_1$ is a 
Weierstrass point of $X$, i.e. $2q_1\sim K_X$. Let $C_3$ be the semistable model of $C_1$ by blowing up $q_1$ and inserting two exceptional $\bbP^1$-components, denoted by $E_1$ and $E_2$, see Figure~\ref{fig:noncompact}. 
 \begin{figure}[h]
    \centering
   \psfrag{C1}{$C_1$:}
    \psfrag{C3}{$C_3$:}
     \psfrag{E1}{$E_1$}
    \psfrag{E2}{$E_2$}
    \psfrag{X}{$X$}
     \psfrag{R}{$R$}
      \psfrag{z}{$z$}
    \psfrag{q1}{$q_1$}
     \psfrag{q2}{$q_2$}
    \includegraphics[scale=1.0]{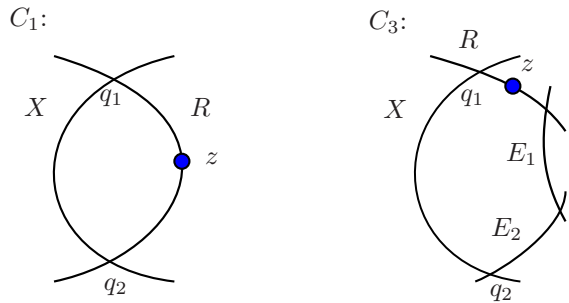}
       \caption{\label{fig:noncompact} A curve with a semistable model} 
 \end{figure}       

Here the subscript $i$ of $C_i$ stands for the length of the chain of rational curves 
between $q_1$ and $q_2$.  

\begin{proposition}
\label{prop:non-compact-example}
In the above setting, we have $(C_1, z) \in \BPP(4)^{\odd}$. In addition, any one-dimensional smoothing family of $(C_1, z)$ into $\calP(4)$ must be 
singular. However, there exists a one-dimensional regular smoothing family of $(C_3, z)$ into $\calP(4)$. 
\end{proposition}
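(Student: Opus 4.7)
The proof has three parts.

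\emph{Part 1.} To show $(C_1,z) \in \BPP(4)^{\odd}$, observe first that $(C_1,z)$ carries a twisted canonical divisor of signature $(4)$: take $\omega_X \in H^0(X,K_X)$ with $(\omega_X)_0 = 2q_1$ (using $2q_1\sim K_X$) and on $R\cong \bbP^1$ the (unique up to scale) meromorphic differential $\omega_R$ of signature $(4,-4,-2)$ at $(z, q_1, q_2)$. Since $C_1$ is of non-pseudocompact type, Theorems~\ref{thm:canonical} and~\ref{thm:canonical-more} do not apply directly, so the plan is to first construct a regular smoothing of $(C_3, z)$ into $\calP(4)$ (Part~3) and then contract the exceptional components $E_1, E_2$ in its special fiber, producing a family realizing $(C_1,z)$ as a limit in $\calP(4)$, with an $A_2$ singularity of the new total space at $q_1$ (consistent with Part~2). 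For the spin parity, I would exclude the hyperelliptic component: under the admissible double cover picture, a hyperelliptic limit presents both nodes $q_1, q_2$ of $C_1$ above a single node of the rational target, forcing them to be \emph{conjugate} under the hyperelliptic involution of $X$. But a Weierstrass point is fixed by the involution, so $q_1$ Weierstrass with $q_1 \neq q_2$ rules out the hyperelliptic component. Hence $(C_1,z) \in \BPP(4)^{\odd}$, the only other component of $\calP(4)$ in genus three.

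\emph{Part 2.} Suppose $\pi\colon \calX \to T$ is a regular smoothing of $(C_1,z)$ into $\calP(4)$. For a family $\omega_t$ with $(\omega_t)_0 = 4z(t)$, the specialization $\omega_0 \in H^0(C_1, K_{C_1})$ has $\omega_0|_X = \omega_X \neq 0$ and $\omega_0|_R \equiv 0$ (any nonzero stable differential on $R$ is a section of $K_R(q_1+q_2)$ of degree zero with no zeros on $R$). Hence $\omega_t$ lies in $H^0(\calX, \omega_{\calX/T}(-mR))$ for some maximal $m \geq 1$. Using $\OO(-R)|_R = \OO_R(q_1+q_2)$ (from $R^2 = -2$ with $R \cap X = q_1+q_2$) and $\OO(-R)|_X = \OO_X(-(q_1+q_2))$, the restriction of $\omega_{\calX/T}(-mR)$ is $K_R((m+1)(q_1+q_2))$ on $R$ and $K_X((1-m)(q_1+q_2))$ on $X$. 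To accommodate $\omega_R$, with pole of order $4$ at $q_1$, we need $m+1 \geq 4$; to accommodate a nonzero $\omega_X \in H^0(X, K_X)$ we need $1-m \geq 0$. The conditions $m \geq 3$ and $m \leq 1$ are incompatible, so no regular smoothing exists.

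\emph{Part 3.} On $(C_3,z)$ all four nodes are external, and I would construct a regular smoothing by a flat-geometric gluing along the chain $X$--$E_1$--$E_2$--$R$, adapting the expand/shrink construction in the proof of Theorem~\ref{thm:canonical}. On each inserted rational component $E_j$ the twisted differential has signature $(-4,2)$ and in suitable coordinates takes the form $c_j/x_j^4\, dx_j$, which has \emph{zero residues} at both branches; hence residue-matching at the new nodes $X \cap E_1$ and $E_1 \cap E_2$ is automatic. The residue mismatches at $E_2 \cap R$ and at $q_2 = X \cap R$ are absorbed by a multi-scale structure: the differentials on $R$ and each $E_j$ enter the family at lower scales $t^{L_j}$, $L_j > 0$, so their nonzero residues contribute at positive order in $t$ and vanish in the limit. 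The inserted chain supplies the additional degree on $R$ (and compensating slack on $X$) that obstructed the smoothing of $C_1$ in Part~2, so the numerical obstruction is resolved and a regular one-parameter smoothing of $(C_3,z)$ into $\calP(4)$ exists. The main technical obstacle throughout is this multi-scale balancing on non-pseudocompact curves, where the smoothability results of Sections~\ref{subsec:smoothing} and~\ref{subsec:meromorphic} do not directly apply.
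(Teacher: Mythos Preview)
Your Part~2 contains a genuine gap. You take the maximal $m$ with $\omega \in H^0(\omega_\pi(-mR))$ and then assert that the restriction to $R$ must be the specific differential $\omega_R$ from Part~1, with a pole of order~$4$ at $q_1$ and order~$2$ at $q_2$. There is no reason for this: the limit on $R$ is determined by the family, not by your earlier choice, and nothing forbids the symmetric signature $(4,-3,-3)$. Likewise, nothing forces the restriction to $X$ to be the holomorphic $\omega_X$. So your inequalities $m+1\geq 4$ and $1-m\geq 0$ are unjustified. The correct argument, which is essentially what the paper does, is that the degree balance forces the twist uniquely: writing $(\omega)_0 = 4Z + aR + bX$ and computing degrees on $R$ and $X$ gives $a-b=2$, so after normalizing one has $\omega' \in H^0(\omega_\pi(-2R))$ with $\omega'|_R$ a nonzero section of $\OO_{\bbP^1}(4)$ vanishing exactly at $4z$, hence nonzero at $q_1,q_2$. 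Gluing at the nodes then forces $\omega'|_X$ to be a nowhere-vanishing section of $K_X(-q_1-q_2)$, giving $q_1+q_2\sim K_X$, contradicting $2q_1\sim K_X$. The paper packages this as a semicontinuity argument for $\pi_*(\omega_\pi(2X-4Z))$, which is the same twist.

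Your Part~3 is not a proof but a hope. The paper's tools (Theorems~\ref{thm:canonical-more}, \ref{thm:twisted-meromorphic}) do not cover non-pseudocompact curves, and your ``multi-scale balancing'' is exactly the missing ingredient whose absence the paper is illustrating with this example; you cannot invoke it here. The paper's route is entirely different and explicit: forget $z$, contract $R$ to get the irreducible one-nodal curve $C_0$, embed it canonically as a plane nodal quartic with a line $L$ meeting it to order four at the node (via $3q_1+q_2 \in |K_X(q_1+q_2)|$), and take a general pencil $tQ + C_0$ among quartics with fourfold contact to $L$ at that point. Locally near the node this is $t(y-x^4)+x(y-x^3)$, an $A_3$-singularity; resolving it and the other base points yields a regular family whose central fiber is exactly $(C_3,z)$. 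This same plane-quartic picture also proves Part~1 directly: a general member of the pencil is a smooth non-hyperelliptic quartic with a hyperflex, hence lies in $\calP(4)^{\odd}$, and its limit is $(C_1,z)$. Your parity argument via admissible covers (a hyperelliptic limit would force $q_1+q_2\sim K_X$) is fine, but it only decides the parity once membership in $\BPP(4)$ is known, and for that you are relying on Part~3.
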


\begin{proof}
If we forget $z$ and blow down $R$, then we get a one-nodal curve $C_0$ with a node $q$. By assumption, $C_0$ 
is not in the closure of genus three hyperelliptic curves, hence it admits a canonical embedding as a plane nodal quartic (see \cite[Proposition 2.3]{HassettQuartics}). The section $3q_1 + q_2$ of the dualizing line bundle $K_{X}(q_1 + q_2)$ corresponds to a line $L$ in $\bbP^2$ that has total contact order four to $C_0$ at $q$, that is, contact order three to the branch of $q_1$ and transversal to the branch of $q_2$, see Figure~\ref{fig:nodal-quartic}. 

 \begin{figure}[h]
    \centering
   \psfrag{C}{$C_0$}
    \psfrag{q}{$q$}
     \psfrag{L}{$L$}
      \includegraphics[scale=0.8]{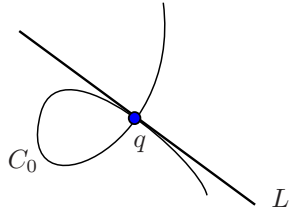}
       \caption{\label{fig:nodal-quartic} A nodal quartic with a hyperflex at the node} 
 \end{figure}       

Note that the space of plane quartics that have contact order four to $L$ at $q$ is irreducible, and its general element corresponds to a smooth, non-hyperelliptic curve $C$ with a hyperflex $q$, i.e. $4q \sim K_C$, and hence $(C, q) \in  \BPP(4)^{\odd}$. Blowing up $C_0$ at $q$ yields the stable pointed curve $(C_1, z)$, thus proving the first part of the proposition. 

Next, suppose there is a smoothing family $\pi: \calX \to T$ such that $\calX$ is everywhere smooth with a section $Z$, $C_1$ is the special fiber with $Z(0) = z$, and the generic fiber $(C_t, Z(t)) \in \calP(4)$. Let $\omega_{\pi}$ be the relative dualizing line bundle of the family. 
Consider the direct image sheaf 
$$ \calF = \pi_{*}(\omega_{\pi}(2X - 4Z)). $$
By assumption, $\calF|_{C_t} = H^0(C_t, \OO) = \bbC$, hence $\calF|_{C_1}$ must have a section not identically zero. 
Note that 
$$\omega_{\pi}(2X - 4Z)|_{X} = K_X(-q_1 - q_2), \quad \omega_{\pi}(2X - 4Z)|_{R} = \OO_{\bbP^1}. $$
It implies that $q_1 + q_2 \sim K_X$, contradicting the assumption that $2q_1\sim K_{X}$. 

Finally, fix $q\in L \subset \bbP^2$ in the first paragraph. Among all plane quartics that have contact order four to $L$ at $q$, take a general one and denote it by $Q$. Consider the pencil of plane quartics generated by $tQ+C_0$, where $t$ is the base parameter. Locally around $(q=(0,0), t=0)$ this family can be written as  
$t(y-x^4) + x(y-x^3)$, hence has an $A_3$-singularity at the origin. Running stable reduction by resolving it as well as other base points of the pencil, it is easy to see that $(C_3, z)$ arises as the central fiber in the resulting everywhere regular family (see also
the proof of Theorem~\ref{thm:double-conic} for a concrete and harder example of stable reduction). 
\end{proof}

\begin{remark}
\label{rem:non-compact-example}
In the above proof, when theta characteristics $\eta_t$ on smooth curves $C_t \in \calP(4)^{\odd}$ degenerate to 
$C_0$, it is not hard to see that the limit theta characteristic $\eta_0$ is of the second kind (see Section~\ref{subsec:spin}), i.e. it consists of 
$(\OO_X(q_1), \OO_{R}(1))$ on the semistable model $C_1$ of $C_0$ without the marked point. The parity of 
$\eta_0$ equals $h^0(X, q_1) = 1$, hence it is odd, as already predicted by Proposition~\ref{prop:non-compact-example}. 
See Remark~\ref{rem:hyp-odd-spin} for a comparable example. 
\end{remark}

The essence of Proposition~\ref{prop:non-compact-example} says that in order to see the containment of $(C_1, z)$ in $\BPP(4)$, it is necessary to insert chains of rational curves at a non-separating and external node, and consider semistable models. For a curve $C$ of pseudocompact type, after this procedure it remains to be of pseudocompact type, and in Section~\ref{subsec:twisted} we have seen that the twisting operation can provide a unique twisted canonical line bundle that has the desired degree $M_i$ on each component of $C$ and has degree zero on the exceptional $\bbP^1$-components. However, for curves of non-pseudocompact type, inserting extra rational chains at non-separating and external nodes can change the twisting pattern significantly. 

Let us study this issue in general. Suppose $C$ is a pointed nodal curve (possibly semistable) with $n = r+s$ marked points and 
$m$ irreducible components $C_1, \ldots, C_m$. Denote by $a_{ij}$ the number of nodes contained in $C_i\cap C_j$ for $i\neq j$. Set $a_{ii} = - \sum_{j\neq i} a_{ij}$ for all $i$. The $m\times m$ matrix $L(C) = (a_{ij})$ (or sometimes $-L(C)$) is called the \emph{Laplacian matrix} of the dual graph of $C$. In particular, $L(C)$ is symmetric, zero is an eigenvalue of $L(C)$, and $(1, \ldots, 1)^t$ is an eigenvector associated to zero. 

Now take a signature $\mu = (k_1, \ldots, k_r, - l_1, \ldots, -l_s)$. As before, let $M_i$ be the sum of zero orders minus the pole orders  
for those marked points contained in $C_i$. 

\begin{proposition}
\label{prop:laplacian}
In the above setting, if there is a regular smoothing family $\pi: \calX \to T$ of $C$ into $\calP(\mu)$, then there exists an integer sequence $(b_1, \ldots, b_m)$, unique modulo $(1, \ldots, 1)$, such that 
\begin{eqnarray}
\label{eq:laplacian}
L(C) \cdot (b_1, \ldots, b_m)^t = (M_1- 2g_1 + 2, \ldots, M_m- 2g_m+2)^t. 
\end{eqnarray}
Moreover, $\sum_{i=1}^r k_i z_i - \sum_{j=1}^s l_j p_j$ is a twisted meromorphic canonical divisor with respect to the twisted canonical line bundle 
$\omega_{\pi}(\sum_{i=1}^m b_i C_i)|_C$. 
\end{proposition}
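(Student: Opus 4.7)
The plan is to read off the twisting integers from the divisorial structure of a natural line bundle on the total space. Introduce
$$\mathcal{M} := \omega_\pi \otimes \OO_\calX\!\left(-\sum_{i=1}^r k_i Z_i + \sum_{j=1}^s l_j P_j\right),$$
where $Z_i$ and $P_j$ denote the sections of the marked points. The hypothesis that the generic fiber lies in $\calP(\mu)$ means $\sum_i k_i z_i(t) - \sum_j l_j p_j(t) \sim K_{C_t}$, so $\mathcal{M}|_{C_t} \cong \OO_{C_t}$ for every $t\neq 0$, and therefore $\mathcal{M}$ is trivial on the open locus $\calX \setminus C$.

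A line bundle trivial on the complement of $C$ admits a rational section whose divisor is entirely supported on $C$. Because $\calX$ is a \emph{regular} smoothing, the only singularities of $\calX$ lie at self-nodes of the components, and at those points each $C_i$ is still Cartier (locally cut out by the pullback of the parameter on $T$). Hence any divisor supported on $C$ is Cartier of the form $\sum_i b_i C_i$ for some integers $b_i$, which yields
$$\mathcal{M} \cong \OO_\calX\!\left(\sum_{i=1}^m b_i C_i\right).$$
Uniqueness of $(b_1,\ldots,b_m)$ modulo $(1,\ldots,1)$ is automatic: the full central fiber $\sum_i C_i = \pi^{-1}(0)$ is the principal divisor of a uniformizer of $T$, so shifting all $b_i$'s by a common constant leaves $\mathcal{M}$ unchanged, while the kernel of $L(C)$ for a connected dual graph is spanned by $(1,\ldots,1)$.

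To extract the Laplacian relation, restrict the isomorphism above to each $C_i$ and equate degrees. The left-hand side has degree $\deg \omega_\pi|_{C_i} - M_i$, computed by $\omega_\pi|_C = \omega_C$ and the standard fact that $\omega_C|_{C_i}$ is the dualizing sheaf of $C_i$ twisted by its external nodes, where $g_i$ denotes the arithmetic genus. The right-hand side has degree $\sum_j a_{ij} b_j$, using $\deg \OO_\calX(C_j)|_{C_i} = a_{ij}$ for $j\neq i$ together with the relation $\OO_\calX(\sum_j C_j)|_{C_i}\cong \OO_{C_i}$ (since the central fiber is principal), which determines the diagonal entry. Equating the two sides and collecting terms produces the stated system $L(C)\cdot (b_1,\ldots,b_m)^t = (M_1-2g_1+2,\ldots,M_m-2g_m+2)^t$. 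The final assertion, that $\omega_\pi(\sum b_j C_j)|_C$ is the twisted canonical line bundle associated to $\sum k_i z_i - \sum l_j p_j$, is then immediate from the definition in Section~\ref{subsec:twisted}, as we have arranged the degree on each $C_i$ to match $M_i$.

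The principal technical point is the Cartier claim used to rewrite $\mathcal{M}$ in the second step; without the regular smoothing hypothesis (say, if $\calX$ had an $A_k$-singularity with $k\geq 2$ at a non-separating external node of $C$), the component $C_i$ might only be $\bbQ$-Cartier, no integer sequence $(b_i)$ need exist, and one would have to replace $\calX$ by a semistable model and rerun the argument on that model, as foreshadowed in Section~\ref{subsec:non-compact}.
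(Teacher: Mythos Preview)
Your proof is correct and follows essentially the same approach as the paper's own argument. Both proofs hinge on the observation that the line bundle $\omega_\pi\bigl(-\sum k_i Z_i + \sum l_j P_j\bigr)$ is trivial away from the central fiber, and then use the regular-smoothing hypothesis (so that each $C_i$ is Cartier) to write it as $\OO_\calX\bigl(\sum b_i C_i\bigr)$; the Laplacian relation then falls out by restricting to each component and comparing degrees, and uniqueness modulo $(1,\ldots,1)$ comes from the one-dimensional kernel of $L(C)$ for a connected dual graph. The paper phrases the first step slightly differently---as extending a meromorphic section of $\omega_\pi$ from $\calX\setminus C$ to all of $\calX$---but this is the same argument in different clothing.
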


\begin{proof}
Let $Z_i$ and $P_j$ denote the sections of $z_i$ and $p_j$ in $\calX$, respectively. 
Since $\calX$ is a regular smoothing, every component $C_i$ is a Cartier divisor of $\calX$. Note that $\sum_{i=1}^r k_i Z_i-\sum_{j=1}^s l_j P_j$
restricted to the open locus $\calX^{\circ} = \calX\backslash C$, as a meromorphic section of $\pi_{*}\omega_{\pi}|_{\calX^{\circ}}$, 
extends uniquely to $\sum_{i=1}^r k_i Z_i - \sum_{j=1}^s l_j P_j - \sum_{i=1}^m b_i C_i$ as a meromorphic section of $\pi_{*}\omega_{\pi}$ for some $b_i \in \bbZ$. Therefore, restricted to the central fiber $C$ we conclude that $\sum_{i=1}^r k_i z_i - \sum_{j=1}^s l_j p_j$ 
is a twisted canonical divisor with respect to the twisted canonical line bundle $K_C\otimes \OO_{\calX}(\sum_{i=1}^m b_i C_i)_{C}$. Moreover, 
$\deg \OO_{\calX}(C_j)|_{C_i} = a_{ij}$, hence Relation~\eqref{eq:laplacian} follows by taking the degree on each $C_i$. Finally, the uniqueness
of $(b_1,\ldots, b_m)$ modulo $(1, \ldots, 1)$ follows from the fact that $\OO_{\calX}( \sum_{i=1}^m d_i C_i)$ is trivial if and only if 
$(d_1, \ldots, d_m)$ is a multiple of $(1, \ldots, 1)$. Alternatively, it also follows from the fact that the algebraic multiplicity of the zero eigenvalue is one for the Laplacian matrix of any connected graph (see e.g. \cite[Lemma 1.7]{Chung}), which is our case because $C$ is connected. 
\end{proof}

\begin{corollary}
\label{cor:laplacian}
If a pointed stable nodal curve $C'$ is contained in $\BPP(\mu)$, then $C'$ admits a semistable model $C$ by inserting chains of rational curves at nodes of $C$ such that there exists a regular smoothing family $\pi: \calX \to T$ of $C$ into $\calP(\mu)$, and hence $C$ satisfies the properties 
described in Proposition~\ref{prop:laplacian}.  
\end{corollary}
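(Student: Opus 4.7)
The plan is to produce the regular smoothing family by standard resolution of surface singularities, then invoke Proposition~\ref{prop:laplacian} directly.

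First, since $C'\in\BPP(\mu)$, by definition of closure there exists a one-parameter smoothing family $\pi':\calX'\to T$ of pointed curves, where $T$ is a disk, such that the central fiber is $(C', z_1,\ldots, z_r, p_1,\ldots, p_s)$ and for $t\neq 0$ the fiber lies in $\calP(\mu)$. After possibly shrinking $T$, we may assume the sections of marked points extend to sections $Z_i, P_j\subset \calX'$ avoiding the nodes of $C'$.

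The total space $\calX'$ is automatically smooth at points of the generic fiber and at smooth points of $C'$. At a self-node of an irreducible component of $C'$, the component remains Cartier and we leave it alone, as this is allowed in the definition of regular smoothing. At any other (external) node $q$ of $C'$, by the deformation theory of nodal curves the local analytic equation of $\calX'$ at $q$ has the form $xy = u(x,y,t)t^{k_q}$ for some $k_q\geq 1$ and unit $u$, so $\calX'$ has at worst an $A_{k_q-1}$ surface singularity there. After a ramified base change of order equal to $\operatorname{lcm}(k_q)$ (which does not change the isomorphism class of the central fiber), we may assume each such singularity is of the standard form $xy=t^{k}$.

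Next, I would resolve each external-node singularity by successively blowing up, pulling the section structure along. The minimal resolution of the $A_{k-1}$-singularity $xy=t^k$ replaces the node of the central fiber by a chain of $k-1$ smooth rational $(-2)$-curves linking the two branches, and leaves the rest of the family untouched. Performing this at every external node of $C'$ gives a new family $\pi:\calX\to T$ whose total space is smooth away from the self-nodes of irreducible components (which are still Cartier, hence acceptable). The new central fiber $C$ is by construction the semistable model of $C'$ obtained by inserting chains of $\bbP^1$'s at the external nodes, and the sections $Z_i, P_j$ persist as sections disjoint from all the new exceptional components. Thus $\pi:\calX\to T$ is a regular smoothing of $C$ into $\calP(\mu)$.

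Finally, Proposition~\ref{prop:laplacian} applies directly to $\pi:\calX\to T$: the existence of the integer sequence $(b_1,\ldots,b_m)$ solving the Laplacian equation \eqref{eq:laplacian} follows, and $\sum_{i=1}^r k_i z_i - \sum_{j=1}^s l_j p_j$ is a twisted meromorphic canonical divisor on $C$ with respect to $\omega_\pi(\sum b_i C_i)|_C$, giving the conclusion. The main subtlety in this argument is handling the base change and verifying that the blow-ups really produce chains of rational curves rather than something more exotic; this is controlled precisely because the singularities of $\calX'$ at external nodes are rational double points of type $A$, whose minimal resolutions are classical and yield exactly $(-2)$-curve chains compatible with a semistable model of $C'$.
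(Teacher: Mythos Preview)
Your proof is correct and follows essentially the same route as the paper: the paper simply invokes semistable reduction (citing \cite[Section 2.7]{EstevesMedeiros}) to pass from an arbitrary smoothing of $C'$ to a regular smoothing of a semistable model $C$, then applies Proposition~\ref{prop:laplacian}; you spell out that procedure explicitly via resolution of the $A_{k-1}$ surface singularities at the external nodes. One small remark: the base-change step you insert is unnecessary and the justification is slightly off---the local form $xy = u\cdot t^{k_q}$ with $u$ a unit is already analytically an $A_{k_q-1}$ singularity, and a ramified base change would only \emph{increase} the exponent rather than normalize it. You can simply drop that step and resolve directly; the chains of rational $(-2)$-curves appear exactly as you describe.
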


\begin{proof}
By assumption, there exists a smoothing family of $C'$ into $\BPP(\mu)$. If it is not regular, running semistable reduction explained in 
\cite[Section 2.7]{EstevesMedeiros} yields the desired regular smoothing family with a semistable model $C$ as the new central fiber. The rest 
part of the corollary follows from Proposition~\ref{prop:laplacian}.  
\end{proof}

In the following example we illustrate how to apply the above results. 

\begin{example}
Recall the semistable model $(C_3, z)$ of the stable curve $(C_1, z)$ described in Proposition~\ref{prop:non-compact-example}. 
Line up the components of $C_3$ in the order $X, R, E_1$ and $E_2$. Then the Laplacian matrix of $C_3$ is 
$$L(C_3) = \left( \begin{matrix} -2 & 1 & 0 &  1 \\ 
 1 & -2 & 1 &  0 \\
  0 & 1 & -2 &  1 \\
   1 & 0 & 1 &  -2 
\end{matrix} \right). $$
Since $\deg K_{C_3}|_X = 4$ and $\deg K_{C_3}|_{R} = \deg K_{C_3}|_{E_i} = 0$ and we want the twisted canonical line bundle to have degree four on $R$ and degree zero on the other components, one checks that 
$$ L(C_3) \cdot (0, -3, -2, -1)^t = (-4, 4, 0, 0)^t. $$
Therefore, for a regular smoothing family $\pi: \calX \to T$ of $C_3$ 
$$\calL = \omega_{\pi}(-3R - 2E_1 - E_2)$$ 
is the twisted relative canonical line bundle whose restriction to $C_3$ has the desired degree on each component. Note that 
$$\calL|_{X} = K_X(-2q_1), \quad \calL|_{R} = \OO_{\bbP^1}(4), \quad \calL|_{E_i} = \OO_{\bbP^1}. $$
If $4z$ is a section of $\calL|_{C_3}$, then $\calL|_{C_3}(-4z)$ has degree zero on every component of $C_3$ and has a section not identically zero, hence it is trivial restricted to every component of $C_3$. It follows that $2q_1 \sim K_X$ and $q_1$ is a Weierstrass point of $X$. This 
provides an inverse statement to Proposition~\ref{prop:non-compact-example}, that is, if such $(C_3, z)$ can be smoothed into $\calP(4)$ via 
a regular smoothing family, then $q_1$ is a Weierstrass point of $X$. 
\end{example}

\section{Boundary of hyperelliptic and spin components}
\label{sec:spin}

Let us first prove Theorem~\ref{thm:spin}. We treat the cases of holomorphic and meromorphic differentials simultaneously. 

\begin{proof}[Proof of Theorem~\ref{thm:spin}]
Let $\mu = (2k_1, \ldots, 2k_r, -2l_1, \ldots, -2l_s)$ be a partition of $2g-2$ with $k_i, l_j \in \bbZ^{+}$. If $s=0$, it is just 
a signature of holomorphic differentials. Let $\calP(\mu)^{\odd}$ and $\calP(\mu)^{\even}$ be the two spin components of $\calP(\mu)$. 
Prove by contradiction. Suppose $(C, z_1, \ldots, z_r, p_1, \ldots, p_s)\in \BM_{g,n}$ with $n=r+s$ is a curve of pseudocompact type contained in 
both $\BPP(\mu)^{\odd}$ and $\BPP(\mu)^{\even}$. 
Let $\pi: \calX\to T$ be a smoothing family of spin curves $(C, z_1(t), \ldots, z_r(t), p_1(t), \ldots, p_s(t))$ in $\calP(\mu)$ degenerating to the pointed curve $C$. Let $Z_i$ and $P_j$ be the sections corresponding to $z_i(t)$ and $p_j(t)$, respectively. 
Denote by 
$$\eta_t = \OO_{C_t}\left(\sum_{i=1}^r k_i z_i(t) - \sum_{j=1}^s l_j p_j(t)\right)$$
the theta characteristic on a generic fiber $C_t$. 

To describe the limit spin structure when $\eta_t$ degenerates to $C$, as described 
in Section~\ref{subsec:spin} we blow up each separating node of $C$ to insert an exceptional $\bbP^1$-component. The resulting curve remains to be of pseudocompact type, and we still denote by $C$ the special fiber. In particular, the marked points $z_j$ and $p_h$ are contained in the non-exceptional components of $C$ only.  
Let $C_1, \ldots, C_m$ be the irreducible components of $C$, and let $q_1, \ldots, q_{m-1}$ be the \emph{separating}
nodes of $C$. For each non-exceptional component $C_i$, let $\eta_i$ be the 
 limit spin structure restricted to $C_i$, and on an exceptional $\bbP^1$ it is $\OO(1)$ (see Section~\ref{subsec:spin}). Also denote by $\eta_{\calX}$ the universal theta characteristic line bundle such that $\eta_{\calX}|_{C_t} = \eta_t$ and $\eta_{\calX}|_{C} = \eta$. 
Since $C$ is of pseudocompact type and $\sum_{j=1}^r k_j - \sum_{h=1}^s l_h = g-1 = \deg \eta_t$, 
there exists a unique twisted universal theta characteristic  
$$\eta_{\mu} = \eta_{\calX}\left(\sum_{i=1}^m b_i C_i\right)$$ 
for some $b_i \in \bbZ$, independent of $\calX$ and depending on $C$ only, 
such that 
$$\deg \eta_{\mu}|_{C_i} = \sum_{z_j\in C_i} k_j - \sum_{p_h\in C_j} l_h $$ 
for non-exceptional components $C_i$ and $\deg \eta_{\mu}|_{C_j} = 0$ when $C_j$ is exceptional. Denote by 
$$\calL = \eta_{\mu} \left( - \sum_{j=1}^r k_j Z_j + \sum_{h=1}^s l_h P_h\right) $$ 
and consider the direct image sheaf $\calF = \pi_{*} \calL$. 
By assumption, $\calF|_{C_t} = H^0(C_t, \OO) = \bbC$, hence 
$\calF|_{C} = H^0(C, \calL|_C)$ has a section not identically zero. Since the degree of $\calL$ restricted to every component of $C$ is zero and $C$ is connected, it implies that $\calL$ is the trivial line bundle, and hence 
$$ \eta_{\mu}|_{C_i} = \OO_{C_i}\left( \sum_{z_j\in C_i} k_jz_j - \sum_{p_h\in C_j} l_h p_h\right). $$
It follows that 
$$ \eta_i = \eta_{\calX}|_{C_i} = \eta_{\mu}\left(-\sum_{i=1}^m b_i C_i\right)|_{C_i}, $$
which is independent of $\calX$ and depends on $C$ only. 
Thus the parity of the limit spin structure $\eta$ on $C$ is given by $\sum_{i=1}^m h^0(C_i, \eta_i) \pmod{2}$, and hence cannot be both even and odd, leading to a contradiction. 
\end{proof}

\begin{remark}
The above proof implies that if an irreducible component $C_i$ has self-nodes, then $\eta_i$ is of the first kind (see Section~\ref{subsec:spin}), i.e. $\eta_i^{\otimes 2} = K_{C_i}$, where $K_{C_i}$ is regarded as the dualizing line bundle of $C_i$. In other words, the zeros and poles do not degenerate to a self-node, so we do not blow up a self-node to insert an exceptional $\bbP^1$, and hence $\eta_i$ cannot be of the second kind. 

Moreover, for the signature $(2k_1, \ldots, 2k_r, -1, -1)$ with $k_i > 0$, 
the corresponding stratum of meromorphic differentials 
also has two spin components (see \cite[Section 5.3]{Boissy}). Suppose $X$ is a pointed smooth curve contained in this stratum with $p_1$ and $p_2$ as the two simple poles. Identifying $p_1$ and $p_2$, we obtain an irreducible one-nodal curve $X'$. Let 
$\eta' = \OO_{X'}( \sum_{j=1}^r k_jz_j)$. 
Since $\eta'^{\otimes 2} = K_{X'}$, $\eta'$ is a spin structure of the first kind on $X'$, 
and the parity of $\eta'$ determines that of $X$. Now suppose $X$ degenerates to a pointed curve $C$ of pseudocompact type. As long as $p_1$ and $p_2$ 
are contained in the same irreducible component of $C$, identifying them as a node yields a curve $C'$, which is still of pseudocompact type. Therefore, 
keeping track of $\eta'$ on $X'$ degenerating to $C'$, the same proof as above goes through, and hence the closures of the two spin components of 
$\HH(2k_1, \ldots, 2k_r, -1, -1)$ remain disjoint in the locus of such pointed nodal curves.  
\end{remark}

\begin{example}
Let us explicitly determine the limit spin structure for a pointed one-nodal curve $C = C_1 \cup_q C_2$ contained in $\BPP(\mu)$ 
for $\mu = (2k_1, \ldots, 2k_r, -2l_1, \ldots, -2l_s)$, which can help the reader capture the upshot of the above proof. 
Blow up $q$ to insert an exceptional $\bbP^1$-component between $C_1$ and $C_2$. Let $q_i = C_i \cap \bbP^1$. 
Let $\eta = (\eta_1, \eta_2, \OO(1))$ be the limit spin structure, where $\OO(1)$ is the unique degree one line bundle on $\bbP^1$ and $\eta_i$ is an ordinary theta characteristic on $C_i$. For $i=1,2$, denote by 
$$N_i = \sum_{z_j\in C_i} k_j - \sum_{p_h\in C_i} l_h. $$
Then we have 
$$ \eta_i = \OO_{C_i}\left(\sum_{z_j\in C_i} k_jz_j - \sum_{p_h\in C_i} l_h p_h + (g_i-1-N_i)q_i\right)$$ 
for $i=1,2$. 
Hence the parity of $(C, \eta)$ is given by $ h^0(C_1, \eta_1) + h^0(C_2, \eta_2) \pmod{2}$. 
\end{example}

In general, if we consider all possible types of pointed stable curves, then 
 $\BPP(\mu)^{\hyp}$, $\BPP(\mu)^{\odd}$ and $\BPP(\mu)^{\even}$ can intersect, illustrated by the following result. 
 
Consider the stratum $\calP(4)$ in genus three. It consists of two connected components $\calP(4)^{\hyp}$ and $\calP(4)^{\odd}$. In this case the hyperelliptic component coincides with the even spin component. Let $C$ be the union of $X$ and $\bbP^1$, attached at two nodes $q_1$ and $q_2$, where $X$ is a smooth curve of genus two, $q_1$ and $q_2$ are conjugate under the hyperelliptic involution of $X$, and $\bbP^1$ contains a marked point $z$, see Figure~\ref{fig:banana}. 
\begin{figure}[h]
    \centering
    \psfrag{X}{$X$}
     \psfrag{P}{$\mathbb P^1$}
      \psfrag{q1}{$q_1$}
      \psfrag{q2}{$q_2$}
      \psfrag{z}{$z$}
    \includegraphics[scale=0.6]{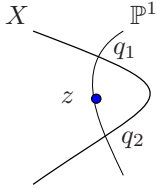}
    \caption{\label{fig:banana} A stable curve of non-pseudocompact type}
\end{figure} 

In particular, as a pointed curve $(C, z)$ is stable and is not of pseudocompact type.   
  
 \begin{theorem}
 \label{thm:double-conic}
 $\BPP(4)^{\hyp}$ and $\BPP(4)^{\odd}$ intersect in $\BM_{3,1}$, both containing $(C, z)$. 
 \end{theorem}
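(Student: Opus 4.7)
The plan has two parts: realize $(C,z)$ as a stable limit of curves in $\calP(4)^{\hyp}$ and separately as a stable limit of curves in $\calP(4)^{\odd}$.

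For the hyperelliptic direction, I will use admissible double covers. Take a reducible base $B_0 = \bbP^1_1 \cup_{p_0} \bbP^1_2$ with six distinct branch points on $\bbP^1_1$ and two on $\bbP^1_2$, all different from $p_0$. The corresponding admissible double cover is a genus two curve $X$ double-covering $\bbP^1_1$ glued to a $\bbP^1$ double-covering $\bbP^1_2$. Because each side of $p_0$ carries an even number of branch points the local monodromy around $p_0$ is trivial, so $p_0$ splits into two nodes $q_1, q_2$ in the cover, and these are interchanged by the hyperelliptic involution. Mark $z$ as one of the two Weierstrass points lying on the $\bbP^1$-bubble. Smoothing the base to an irreducible $\bbP^1$ with eight branch points yields a family of smooth genus three hyperelliptic curves $(Y_t, z_t)$ with a marked Weierstrass point, and these lie in $\calP(4)^{\hyp}$ since $4z_t \sim 2\,g^1_2 \sim K_{Y_t}$. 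The stable limit as the base degenerates is exactly $(C, z)$, so $(C,z) \in \BPP(4)^{\hyp}$.

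For the odd-spin direction, the idea is to degenerate a family of non-hyperelliptic plane quartics with a hyperflex to a double conic. Fix a smooth conic $C_0 \subset \bbP^2$, a point $p \in C_0$ with tangent line $L$, and a quartic $R$ tangent to $C_0$ at $p$ so that $R \cdot C_0 = 2p + r_1 + \cdots + r_6$ with the $r_i$ distinct. Consider the pencil
\begin{equation*}
Q_t = C_0^2 + tR.
\end{equation*}
For generic such $R$ the quartic $Q_t$ is smooth and non-hyperelliptic for small $t \neq 0$; by a local analytic deformation of the excess-contact intersection $L \cdot Q_0 = 4p$, one arranges that $Q_t$ carries a hyperflex $z_t$ with tangent line $L_t$ such that $z_t \to p$ and $L_t \to L$ as $t \to 0$. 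Stable reduction of $(Q_t, z_t)$ proceeds by the base change $t = s^2$ followed by resolution of surface singularities along the eight points of $R \cap C_0$: single $(-2)$-curves at the six transverse intersections $r_i$ and a higher-order resolution at the tangency point $p$, following the $A_3$-template in the proof of Proposition~\ref{prop:non-compact-example}. After contracting the $(-2)$-curves and running semistable reduction at $p$, the central fiber is the double cover of $C_0$ unioned with a $\bbP^1$-bubble at $p$, branched at the $r_i$ on $C_0$ and at two further points on the bubble. This is exactly $X \cup_{q_1, q_2} \bbP^1 = C$, and $z_t$ limits to a marked point $z$ on the $\bbP^1$-bubble. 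Since each smooth $Q_t$ is a non-hyperelliptic quartic with a hyperflex, its theta characteristic $\OO_{Q_t}(z_t)$ satisfies $h^0 = 1$ and is odd, so $(Q_t, z_t) \in \calP(4)^{\odd}$, giving $(C, z) \in \BPP(4)^{\odd}$.

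The main obstacle lies in the odd-spin part: both verifying the existence of the hyperflex family $z_t$ on the pencil $Q_t$ with the prescribed limit, and controlling the stable reduction so that the stable limit is precisely the two-noded curve $C$ rather than a further degeneration. As a consistency check one can apply Proposition~\ref{prop:laplacian} to the semistable model $C'$ obtained by inserting one exceptional $\bbP^1$ at each of $q_1, q_2$: ordering the components as $X, \bbP^1, E_1, E_2$, the Laplacian equation admits the integer solution $b = (2, -2, 0, 0)$, yielding the twisted canonical line bundle $\omega_\pi(2X - 2\bbP^1)$ whose restriction to $X$ is $K_X(-q_1 - q_2)$; this is trivial exactly because $q_1 + q_2 \sim K_X$, i.e.\ because $q_1, q_2$ are hyperelliptically conjugate as assumed.
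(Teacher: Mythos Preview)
Your hyperelliptic argument is essentially the paper's: both use that $(C,z)$ visibly carries an admissible double cover with $z$ a ramification point, hence lies in $\BPP(4)^{\hyp}$.

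For the odd-spin part, your outline is close in spirit to the paper's, but the gap you flag is real and the paper resolves it by a different choice of pencil. You take $R$ merely tangent to $C_0$ at $p$; a generic such $R$ gives a pencil $C_0^2 + tR$ whose general member has \emph{no} hyperflex at all (having a hyperflex is a codimension-one condition on quartics), so there is no hyperflex section $z_t$ to track, and your ``local analytic deformation'' claim does not hold as stated. The paper instead chooses the second generator $Q_1$ to itself have contact order four with the line $L$ at $z$; in local coordinates the pencil is $(y-x^2)^2 + t(x^4 + y f(x,y))$, and one checks that \emph{every} member meets $\{y=0\}$ to order four at the origin. Thus the hyperflex section is the constant section $z$, and the existence problem disappears entirely. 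Note that this condition on $Q_1$ automatically forces $Q_1$ to be tangent to $C_0$ at $p$ (two curves tangent to the same line at the same point are tangent to each other), so the base locus still has the form $2p + r_1 + \cdots + r_6$ you need for the stable reduction.

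The paper then carries out the stable reduction explicitly: the total space has an $A_3$-singularity at $(x,y,t)=(0,0,0)$ and $A_1$-singularities at the six other base points; two blowups resolve the $A_3$, one each resolves the $A_1$'s, and \emph{then} a degree-two base change branched along the resulting $(-2)$-curves makes the central fiber reduced, equal to $\wt C \cup \wt E_3$ plus contractible $(-1)$-curves. Your order (base change first, then resolve) and your appeal to the ``$A_3$-template'' of Proposition~\ref{prop:non-compact-example} are not quite right: that proposition treats a \emph{nodal} quartic, not a double conic, and the singularity structure here is genuinely different. Your Laplacian consistency check is correct and a good sanity test, but it only shows the necessary twisting data exists, not that $(C,z)$ actually arises as a limit.
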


\begin{proof}
Since $q_1$ and $q_2$ are hyperelliptic conjugate in $X$, it implies that $C$ admits an admissible double cover with $z$ as a ramification point, and hence $(C, z)$ is contained in $\BPP(4)^{\hyp}$, see Figure~\ref{fig:doublecover}. 
\begin{figure}[h]
    \centering
    \psfrag{z}{$z$}
    \psfrag{X}{$X$}
    \psfrag{q1}{$q_1$}
    \psfrag{q2}{$q_2$}
    \psfrag{P1}{$\mathbb{P}^1$}
    \includegraphics[scale=0.8]{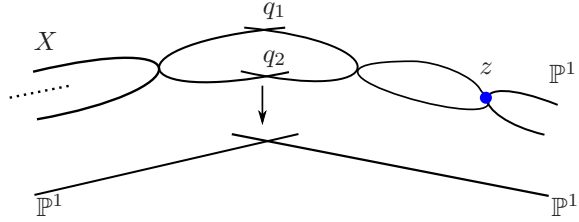}
    \caption{\label{fig:doublecover} A degenerate hyperelliptic double cover}
\end{figure}

To see the containment in $\BPP(4)^{\odd}$, take a pencil of plane quartics generated by $Q_0$ and $Q_1$, where 
$Q_0$ is a double conic tangent to a line $L$ at $z$, and $Q_1$ is general among all plane quartics that are tangent to $L$ at $z$ with multiplicity four, see Figure~\ref{fig:quartics}. 
\begin{figure}[h]
    \centering
    \psfrag{z}{$z$}
    \psfrag{Q0}{$Q_0$}
    \psfrag{Q1}{$Q_1$}
    \psfrag{L}{$L$}
    \includegraphics[scale=0.8]{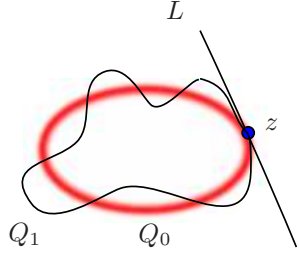}
    \caption{\label{fig:quartics} Quartics degenerate to a double conic with a common hyperflex}
\end{figure}

Let us carry out \emph{stable reduction} to this pencil to see that the stabilization of $(Q_0, z)$ is 
$(C, z)$. Suppose the pencil is given by $(y-x^2)^2 + t(x^4 + yf(x,y))$ with $t$ the base parameter, where $f(x,y)$ is general of degree three. All curves in this family share a common hyperflex line defined by $y = 0$. Let $Z$ be the section of hyperflex points given by 
$x=y=0$. Let $C$ 
be the reduced central fiber when $t=0$, i.e. the double conic is $2C$. At 
$x=y=t=0$, the total family as a surface has an $A_3$-singularity, i.e. locally of type $u v = w^4$. First, blowing it up reduces the singularity to of type $A_1$, and the proper transform of $C$ meets the new singularity, where the two exceptional curves $E_1$ and $E_2$ meet, and $Z$ meets exactly one of $E_1$ and $E_2$. Blowing up the singularity again makes the family smooth, and now $C$ meets the interior of the new exceptional curve $E_3$. The pencil has six other base points, which are all $A_1$-singularities. Blowing them up yields six exceptional curves $B_1,\ldots, B_6$. All the $E_i$ and $B_j$ have self-intersection $-2$. 
Let $b_j = B_j\cap C$, $e_i = E_i\cap E_3$ for $i=1,2$, and $e_3 = E_3\cap C$. At this stage the central fiber of the family consists of 
$$2C + E_1 + E_2 + 2E_3 + \sum_{j=1}^6 B_j,$$
see Figure~\ref{fig:blowup1}. 

\begin{figure}[h]
    \centering
    \psfrag{z}{$z$}
    \psfrag{B1}{$B_1$}
    \psfrag{B6}{$B_6$}
     \psfrag{E1}{$E_1$}
    \psfrag{E2}{$E_2$}
     \psfrag{E3}{$E_3$}
    \psfrag{b1}{$b_1$}
     \psfrag{b6}{$b_6$}
    \psfrag{e1}{$e_1$}
     \psfrag{e2}{$e_2$}
    \psfrag{e3}{$e_3$}
    \psfrag{C}{$C$}
    \includegraphics[scale=0.8]{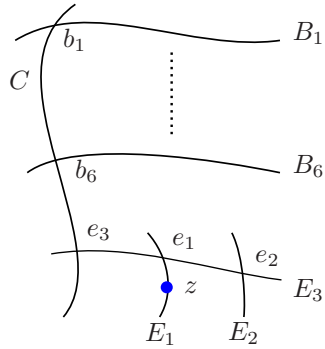}
    \caption{\label{fig:blowup1} The central fiber after resolving the singularities}
\end{figure}

Take a base change of degree two branched along all $B_j$, $E_1$ and $E_2$. In particular, $b_j$ and $e_1, e_2$ are branch points on $C$ and on $E_3$, respectively. Pull back the central fiber and divide it by two. We obtain the new central fiber reduced and consisting of 
$$ \wt{C} + \wt{E}_1 + \wt{E}_2 + \wt{E}_3 + \sum_{j=1}^6 \wt{B}_j,$$
see Figure~\ref{fig:blowup2}. 

\begin{figure}[h]
    \centering
    \psfrag{z}{$z$}
    \psfrag{B1}{$\wt{B}_1$}
    \psfrag{B6}{$\wt{B}_6$}
     \psfrag{E1}{$\wt{E}_1$}
    \psfrag{E2}{$\wt{E}_2$}
     \psfrag{E3}{$\wt{E}_3$}
    \psfrag{q1}{$q_1$}
     \psfrag{q2}{$q_2$}
        \psfrag{C}{$\wt{C}$}
    \includegraphics[scale=0.8]{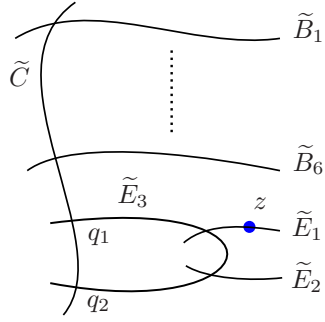}
    \caption{\label{fig:blowup2} The central fiber after a base change}
\end{figure}

Here $\wt{C}$ is a double cover of $C$ branched at all $b_j$, hence of genus two, $\wt{E}_3$ is a double cover of $E_3$ branched at $e_1, e_2$, 
$\wt{E}_3^2 = -4$, and $\wt{B}_j^2 = \wt{E}_i^2 = -1$ for $i=1,2$ and for all $j$. Moreover, $\wt{E}_3$ meets $\wt{C}$ at two points $q_1, q_2\in \wt{C}$ 
that are the inverse images of $e_3$ under the double cover, hence $q_1$ and $q_2$ are conjugate under the hyperelliptic involution of $\wt{C}$. 
Finally, blowing down $\wt{E}_1, \wt{E}_2$ and $B_j$, we obtain the desired curve configuration as $(C, z)$. 
Since $Z$ meets exactly one of $E_1$ and $E_2$, after blowing down $E_1$ and $E_2$, $Z$ only meets $\wt{E}_3$ in the central fiber, and hence as a marked curve $\wt{E}_3$ along with $q_1, q_2$ and $z$ is stable. 
\end{proof}

\begin{remark}
\label{rem:hyp-odd-spin}
In the above setting, let $C_0$ be the stable model of $C$ after forgetting $z$, i.e. $C_0$ is obtained by identifying 
$q_1$ and $q_2$ in $X$ as a node. When theta characteristics $\eta_t = \OO_{C_t}(2z)$ degenerate to $C_0$ from smooth curves 
$C_t \in \calP(4)$, it is not hard to see that the limit spin structure $\eta_0$ is of the first kind, whose pullback $\eta_0'$ 
on $X$ is $\eta'_0 = \OO_X(q_1 + q_2) = K_X$. There are two ways to identify fibers of $\eta'_0$ over $q_1$ and $q_2$ such that it descends to $\eta_0$ on $C_0$ with $h^0(C_0, \eta_0) = 1$ and $h^0(C_0, \eta_0) = 2$, respectively, hence both parities can appear. 
\end{remark}

\section{Weierstrass point behavior of general differentials}
\label{sec:weierstrass}

Suppose $\calP(\mu)$ is a stratum (component) of effective canonical divisors with signature $\mu = (m_1, \ldots, m_n)$. If $m_1 \geq g$, then by Riemann-Roch, $$ h^0(C, gz_1) = h^0(C, (m_1-g)z_1 + m_2 z_2 + \cdots + m_n z_n) + 1 \geq 2, $$
hence by definition $z_1$ is a Weierstrass point for all $(C, z_1, \ldots, z_n)\in \calP(\mu)$. Now assume that $m_1 < g$. Then it is natural to expect that for a \emph{general} 
$(C, z_1, \ldots, z_n)\in \calP(\mu)$, $z_1$ is \emph{not} a Weierstrass point. Similarly for a stratum (component) of meromorphic canonical divisors $\calP(\mu)$ with $\mu = (k_1, \ldots, k_r, -l_1, \ldots, -l_s)$, it is natural to expect that for a \emph{general} $(C, z_1, \ldots, z_r, p_1, \ldots, p_s)\in \calP(\mu)$, $z_1$ and $p_1$ are \emph{not} Weierstrass points. 

We remark that for hyperelliptic components, the answer is straightforward. First consider hyperelliptic components of holomorphic differentials
(see \cite[Definition 2]{KontsevichZorich}). For $(C, z)\in \calP(2g-2)^{\hyp}$, we have seen that $z$ is a 
Weierstrass point of $C$. For  
$(C, z_1, z_2)\in \calP(g-1, g-1)^{\hyp}$, the two zeros $z_1$ and $z_2$ are conjugate under the hyperelliptic involution, hence they are not Weierstrass points. Now consider hyperelliptic components of meromorphic differentials (see \cite[Proposition 5.3]{Boissy}). 
For $(C, z, p)\in \calP(2k, -2l)^{\hyp}$, both $z$ and $p$ are Weierstrass points. For $(C, z, p_1, p_2)\in \calP(2k, -l, -l)^{\hyp}$, 
$z$ is a Weierstrass point, but $p_1$ and $p_2$ are conjugate under the hyperelliptic involution, hence they are not. 
For $(C, z_1, z_2, p)\in \calP(k, k, -2l)^{\hyp}$, $p$ is a Weierstrass point, but $z_1$ and $z_2$ are conjugate under the hyperelliptic involution, hence they are not. Finally for $(C, z_1, z_2, p_1, p_2)\in \calP(k, k, -l, -l)^{\hyp}$, $z_1$ and $z_2$ (resp. $p_1$ and $p_2$) are conjugate under the hyperelliptic involution, hence all of them are not Weierstrass point. Therefore, for the rest of the section when we speak of a stratum (component), we assume that it is not hyperelliptic. 

Below we explore several approaches and establish desired results in a number of cases. 

\subsection{Merging zeros and poles}
\label{subsec:merging}

First, we point out that the case of $\calP(m_1, \ldots, m_n)$ can be reduced to $\calP(m_1, 2g-2-m_1)$. 

\begin{lemma}
\label{lem:weierstrass-two}
For $m_1 < g$, if $z_1$ in a general curve $(C, z_1, z_2)\in \calP(m_1, 2g-2-m_1)$ is not a Weierstrass point, then neither is $z_1$ in a general curve 
$(C, z_1, \ldots z_n)\in \calP(m_1, \ldots, m_n)$. 
\end{lemma}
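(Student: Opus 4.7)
The plan is to degenerate to the boundary: I will show that a generic $(C, z_1, z_2') \in \calP(m_1, 2g-2-m_1)$ (on which $z_1$ is not a Weierstrass point by assumption) sits naturally inside a boundary point of $\BPP(m_1, \ldots, m_n)$, and then transfer the non-Weierstrass condition first to the nodal limit and then, by upper semicontinuity of $h^0$, to its smoothings.

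For the construction, I will take such a generic $(C, z_1, z_2')$ and attach $R \cong \bbP^1$ to $C$ by identifying a chosen point $q \in R$ with $z_2' \in C$, placing $n-1$ distinct marked points $z_2, \ldots, z_n \in R\setminus\{q\}$ with prescribed vanishing orders $m_2, \ldots, m_n$. Writing $X = C \cup_q R$, I want to invoke Theorem~\ref{thm:canonical-more} to conclude $(X, z_1, z_2, \ldots, z_n) \in \BPP(m_1, \ldots, m_n)$: the component $R$ is polar and is the unique polar component, $C$ is the only holomorphic component, and the linear equivalences~\eqref{eq:linear-more} reduce to $m_1 z_1 + (2g-2-m_1)q \sim K_C$ (our hypothesis on $(C,z_1,z_2')$) together with $\sum_{i\geq 2} m_i z_i \sim (2g-2-m_1)q$ on $\bbP^1$, which is automatic because both sides have the same degree $2g-2-m_1$.

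Next, I will compute $h^0(X, \OO_X(gz_1)) = 1$ via the Mayer--Vietoris sequence
\begin{equation*}
0 \to \OO_X(gz_1) \to \OO_C(gz_1) \oplus \OO_R \to \bbC_q \to 0.
\end{equation*}
Since $z_1$ is not Weierstrass on $C$, $h^0(C, gz_1) = 1$ and its unique (up to scaling) section vanishes only along $gz_1$, hence is nonzero at $q \neq z_1$; combined with the constant section on $R$ this shows the evaluation map to $\bbC_q$ is surjective, so $h^0(X, \OO_X(gz_1)) = 1 + 1 - 1 = 1$. Applying upper semicontinuity of $h^0$ along any smoothing family of $X$ into $\calP(m_1, \ldots, m_n)$ produced by Theorem~\ref{thm:canonical-more}, the nearby smooth fibers must also satisfy $h^0(gz_1(t)) \leq 1$, hence equal to $1$, so $z_1(t)$ is not a Weierstrass point. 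Because the Weierstrass locus is closed in $\calP(m_1, \ldots, m_n)$, the same conclusion propagates to a dense open set.

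The main obstacle I anticipate is a component issue: a single boundary degeneration reaches only the irreducible (and spin or hyperelliptic) components of $\calP(m_1, \ldots, m_n)$ that its smoothings lie in. To cover every non-hyperelliptic component, I will need to vary $(C, z_1, z_2')$ over a dense subset of each non-hyperelliptic component of $\calP(m_1, 2g-2-m_1)$ and vary the positions $z_2,\ldots,z_n$ on $R$, invoking the flat-geometric expansion/shrinking construction from the proof of Theorem~\ref{thm:canonical} to argue that the resulting boundary curves fill out open subsets of every component of $\calP(m_1, \ldots, m_n)$ they can approach. Any genuinely missed component would have to be addressed by hand using the Kontsevich--Zorich and Boissy classifications.
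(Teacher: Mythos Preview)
Your argument is correct, but it is considerably more elaborate than the paper's proof, which avoids nodal curves entirely. The paper works in $\BM_{g,1}$ rather than $\BM_{g,n}$: by the Kontsevich--Zorich result that a zero of order $2g-2-m_1$ can be split into zeros of orders $m_2,\ldots,m_n$, the image of $\calP(m_1,2g-2-m_1)$ in $\BM_{g,1}$ (marking only $z_1$) lies in the closure of the image of $\calP(m_1,\ldots,m_n)$. Since the Weierstrass locus in $\BM_{g,1}$ is closed, if a general $z_1$ in $\calP(m_1,\ldots,m_n)$ were Weierstrass, then so would a general $z_1$ in $\calP(m_1,2g-2-m_1)$ be, contradicting the hypothesis. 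No Mayer--Vietoris or semicontinuity on a nodal curve is needed.

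Your route and the paper's are really two perspectives on the same degeneration: colliding $z_2,\ldots,z_n$ in $\BM_{g,n}$ sprouts the rational tail $R$ you describe, while forgetting all markings except $z_1$ contracts $R$ and recovers the smooth curve in $\calP(m_1,2g-2-m_1)$. The paper's viewpoint is shorter because after contraction the Weierstrass condition is the ordinary one on a smooth curve. Your approach has the mild advantage of staying in $\BM_{g,n}$ and invoking the smoothability machinery already built up in Section~\ref{sec:canonical}, but at the cost of the Mayer--Vietoris computation. One small phrasing issue: the unique section of $\OO_C(gz_1)$ (when $h^0=1$) is the constant section and has \emph{no} zeros as a section of this line bundle; what you mean is that the corresponding effective divisor in $|gz_1|$ is $gz_1$ itself, hence supported away from $q$. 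Finally, the component issue you raise is present in both approaches and is handled the same way: the hypothesis is read as holding for each component of $\calP(m_1,2g-2-m_1)$, and zero-splitting (equivalently, your rational-tail smoothing) reaches each non-hyperelliptic component of $\calP(m_1,\ldots,m_n)$ from some component of $\calP(m_1,2g-2-m_1)$.
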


\begin{proof}
By \cite{KontsevichZorich}, a zero of order 
$2g-2 - m_1 = m_2 + \cdots + m_n$ 
can be split off as distinct zeros of order $m_2, \ldots, m_n$, respectively. In other words, marking $z_1$ and lifting the strata into $\BM_{g,1}$, then $\calP(m_1, 2g-2-m_1)$ is contained in the closure of $\calP(m_1, \ldots, m_n)$. If a 
general curve $(C, z_1, \ldots z_n)\in \calP(m_1, \ldots, m_n)$ has $z_1$ as a Weierstrass point, then the closure of the locus of Weierstrass points in $\BM_{g,1}$ contains $\calP(m_1, \ldots, m_n)$, and hence contains $\calP(m_1, 2g-2-m_1)$, contradicting the assumption that $z_1$ in a general curve $(C, z_1, z_2)\in \calP(m_1, 2g-2-m_1)$ is not a Weierstrass point. 
\end{proof}

Next, for the case of meromorphic differentials. There is a similar procedure of merging poles. 

\begin{lemma}
\label{lem:merging-pole}
A pole of order $d = d_1 + d_2$ can be split off as two poles of order $d_1$ and $d_2$, respectively, i.e. a meromorphic differential 
in $\HH(k_1, \ldots, k_r, -d, -l_1, \ldots, -l_s)$ is a limit of differentials in $\HH(k_1, \ldots, k_r, -d_1, -d_2, -l_1, \ldots, -l_s)$. 
\end{lemma}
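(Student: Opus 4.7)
The plan is to adapt the flat-geometric ``bubbling off'' surgery that Kontsevich--Zorich use to split a zero, translating it to the pole setting via Boissy's broken half-plane description (Section~\ref{subsec:pole}). Given $(C_0,\omega_0)\in\HH(k_1,\ldots,k_r,-d,-l_1,\ldots,-l_s)$ with a pole $p$ of order $d=d_1+d_2$, I want to exhibit a one-parameter family $(C_\varepsilon,\omega_\varepsilon)\in\HH(k_1,\ldots,k_r,-d_1,-d_2,-l_1,\ldots,-l_s)$ on smooth curves of the same genus $g$, limiting to $(C_0,\omega_0)$ as $\varepsilon\to 0$.

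The local flat model at $p$, after Boissy, is obtained by gluing $2d-2$ broken half-planes along half-line and broken-segment boundaries, with $p$ sitting at infinity; the complex data of the broken segments encode the polar coefficients $a_{-d},\ldots,a_{-2}$ and the residue $a_{-1}$ of $\omega_0$ at $p$. The surgery consists of excising a large but finite flat neighborhood $U$ of $p$ bounded by a polygonal contour, and replacing it by a model $U_\varepsilon$ containing two cone points $p_1(\varepsilon),p_2(\varepsilon)$ representing poles of orders $d_1,d_2$, joined by a saddle connection of complex length $\varepsilon$. The interior of $U_\varepsilon$ is assembled from the $2d_1-2$ broken half-planes around $p_1$ and the $2d_2-2$ broken half-planes around $p_2$, together with the identifications along which they meet across the neck. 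By choosing its exterior polygon to agree with the boundary of $U$ up to a perturbation of size $O(\varepsilon)$, one glues $U_\varepsilon$ back into $C_0\setminus U$ to obtain the desired $(C_\varepsilon,\omega_\varepsilon)$; the other zeros $z_i$ and poles $p_j$ lie entirely outside $U$ and are untouched, so their orders are preserved. As $\varepsilon\to 0$ the neck collapses, $U_\varepsilon$ degenerates continuously to $U$, and $(C_\varepsilon,\omega_\varepsilon)\to(C_0,\omega_0)$.

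The main obstacle is the residue constraint imposed by the Residue Theorem: if $r$ denotes the residue of $\omega_0$ at $p$, then the residues $r_1,r_2$ of $\omega_\varepsilon$ at $p_1,p_2$ must satisfy $r_1+r_2=r$, with the other residues unchanged. When $d_1,d_2\geq 2$ this is easy to arrange, since the residues of the two new poles are independent parameters encoded in the broken-segment data and can be chosen to balance. When one (or both) of $d_1,d_2$ equals $1$, the residue at a simple pole is rigid flat-geometric data (the circumference of a half-infinite cylinder, cf.\ Section~\ref{subsec:pole}), and the free parameters of $U_\varepsilon$ must be tuned so that the balance $r_1+r_2=r$ holds; after imposing this one complex condition, the remaining freedom is exactly the neck parameter $\varepsilon$, consistent with the dimension increment $\dim_{\bbC}\HH(k_1,\ldots,k_r,-d_1,-d_2,-l_1,\ldots,-l_s)=\dim_{\bbC}\HH(k_1,\ldots,k_r,-d,-l_1,\ldots,-l_s)+1$ recorded in Section~\ref{subsec:pole}. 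This matching of parameter counts is what makes me confident the surgery can be carried out, and it is also the trickiest bookkeeping in writing down the construction explicitly.
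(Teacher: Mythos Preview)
Your approach is essentially the same as the paper's: both use Boissy's broken half-plane model and perform a local flat-geometric surgery to split the pole. The paper's execution, however, is considerably more concrete. Rather than excising an abstract neighborhood $U$ and gluing in a model $U_\varepsilon$, the paper works directly with the $d-1$ pairs of basic domains $D_i^{\pm}$ comprising the pole: it truncates the boundary rays $a_1$ and $l_{d_1+1}$ by vertical half-lines, then observes that the resulting pieces regroup into a pole of order $d_1$ (from $D_1^{\pm},\ldots,D_{d_1}^{\pm}$, with two of them merged across the truncation) and a pole of order $d_2$ (from the rest). Reversing the truncation---extending the cut segments arbitrarily far---gives the one-parameter family limiting to the original pole.

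Two points where your write-up would benefit from the paper's concreteness. First, the phrase ``large but finite flat neighborhood $U$ of $p$'' is problematic: in the flat picture the pole sits at infinity and any neighborhood of it has infinite area, so what you really want is to modify the infinite ends of the half-planes while keeping the finite core fixed. The paper's truncation language avoids this ambiguity. Second, your count of $2d_1-2$ plus $2d_2-2=2d-4$ half-planes falls two short of the $2d-2$ you started with; the paper's construction resolves this by having the truncation glue two of the original domains into a single new basic domain (e.g.\ $D_{d_1}^-\cup D_1^-$), which your ``identifications across the neck'' gesture at but do not make explicit. Your residue discussion is correct but more elaborate than necessary---once the explicit cut-and-paste is written down, the residue balance is automatic from the translation-matching of the truncated segments.
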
 

\begin{proof}
Let $p$ be a pole of order $d$. As we have seen, the local flat-geometric neighborhood of $p$ can be constructed by gluing $d-1$ pairs of basic domains $D_i^{\pm}$, where $D_i^{+}$ has boundary rays $a_i$ and $l_i$, and $D_i^{-}$ has boundary rays $a_i$ and $l_{i+1}$, see Figure~\ref{fig:pole-basic}. 

\begin{figure}[h]
    \centering
    \psfrag{ai}{$a_i$}
    \psfrag{li}{$l_i$}
    \psfrag{li+1}{$l_{i+1}$}
     \psfrag{D+}{$D_i^{+}$}
    \psfrag{D-}{$D_i^{-}$}
    \includegraphics[scale=0.8]{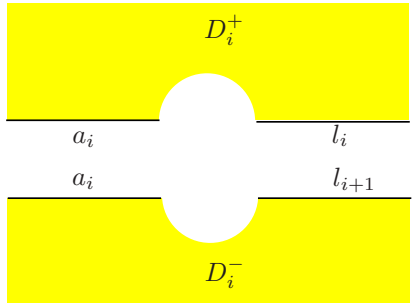}
    \caption{\label{fig:pole-basic} A pair of broken half-planes as basic domains}
\end{figure}

Now truncate $a_1$ in $D_1^{+}$ and $l_{d_1+1}$ in $D_{d_1+1}^{+}$ by a vertical half-line $x$ upward. Truncate $a_1$ in $D_1^{-}$ and $l_{d_1+1}$ in $D_{d_1}^{-}$ by a vertical half-line $y$ downward. We choose the truncated line segments in $a_1$ (resp. in $l_{d_1+1}$) such that they can still be glued under translation, see Figure~\ref{fig:pole-truncate}. 

\begin{figure}[h]
    \centering
    \psfrag{a1}{$a_1$}
    \psfrag{l1}{$l_1$}
    \psfrag{l2}{$l_{2}$}
     \psfrag{D1+}{$D_1^{+}$}
    \psfrag{D1-}{$D_1^{-}$}
    \psfrag{x}{$x$}
    \psfrag{y}{$y$}
    \psfrag{Dd+}{$D_{d_1+1}^{+}$}
     \psfrag{Dd-}{$D_{d_1}^{-}$}
    \psfrag{ad+}{$a_{d_1+1}$}
     \psfrag{ad}{$a_{d_1}$}
    \psfrag{ld+}{$l_{d_1+1}$}
    \psfrag{ld-}{$l_{d_1+1}$}
  \includegraphics[scale=0.8]{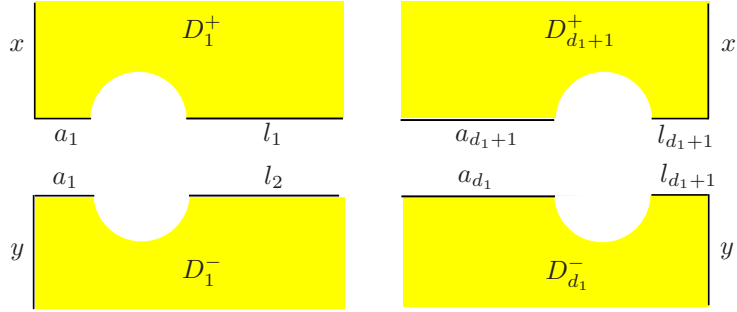}
    \caption{\label{fig:pole-truncate} Basic domains after truncation}
\end{figure}

After this operation, $D_2^{-}, \ldots, D_{d_1-1}^{-}, D_{d_1}^{-}\cup D_{1}^{-}$ and $D_2^{+}, \ldots, D_{d_1-1}^{+}, D_{1}^{+}$ form a flat-geometric neighborhood of a pole of order $d_1$. Similarly, the remaining basic domains form a pole of order $d_2$. If we reverse this process by extending the line segments in $a_1$ and in $l_{d_1+1}$ arbitrarily long, the two poles thus merge to a single pole of order $d_1 + d_2 = d$. 
\end{proof}

\begin{remark}
Note that one cannot always merge a zero and a pole. For instance, differentials in $\HH(2, -1, -1)$ cannot specialize to a differential in $\HH(1,-1)$, because differentials of the latter type do not exist. 
\end{remark}

We can similarly reduce the case $\calP(k_1, \ldots, k_r, -l_1, \cdots, -l_s)$ to $\calP(k_1, \ldots, k_r, -l)$, where $l = \sum_{i=1}^s l_1 + \cdots + l_s$. 

\begin{lemma}
\label{lem:weierstrass-two-meromorphic}
If $p$ in a general curve $(C, z_1, \ldots, z_r, p)\in \calP(k_1, \ldots, k_r, -l)$ is not a Weierstrass point, neither is $p_i$ in 
a general curve $(C, z_1, \ldots z_r, p_1, \ldots, p_s)\in \calP(k_1, \ldots, k_r, -l_1, \cdots, -l_s)$. 
\end{lemma}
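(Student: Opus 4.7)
The plan is to mimic the argument of Lemma~\ref{lem:weierstrass-two}, with Lemma~\ref{lem:merging-pole} playing the role there played by the zero-splitting principle of Kontsevich--Zorich. By the symmetry of the indexing of $p_1, \ldots, p_s$ it suffices to treat the case $i = 1$. Set $l = l_1 + \cdots + l_s$.

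First I would mark only the single pole of interest in each stratum: the pole $p$ in $\calP(k_1, \ldots, k_r, -l)$, and the pole $p_1$ in $\calP(k_1, \ldots, k_r, -l_1, \ldots, -l_s)$. Forgetting every other marked point yields maps from both strata to $\BM_{g,1}$. Applying Lemma~\ref{lem:merging-pole} iteratively, each of $p_2, \ldots, p_s$ can be merged into $p_1$ through a continuous family of differentials in $\HH(k_1, \ldots, k_r, -l_1, \ldots, -l_s)$, whose limit is a differential in $\HH(k_1, \ldots, k_r, -l)$ with the merged pole playing the role of $p$. After forgetting every marked point except the pole of interest, the collision of $p_2, \ldots, p_s$ with $p_1$ produces no unstable components to worry about, since any bubble carrying the merged poles is contracted in $\BM_{g,1}$. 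Thus the image of $\calP(k_1, \ldots, k_r, -l)$ in $\BM_{g,1}$ is contained in the closure of the image of $\calP(k_1, \ldots, k_r, -l_1, \ldots, -l_s)$ in $\BM_{g,1}$.

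The second step is the contrapositive. The Weierstrass locus $W \subset \BM_{g,1}$ is closed, because on smooth curves the condition $h^0(C, gp_1) \geq 2$ is closed by semicontinuity applied to $\OO_C(gp_1)$, and $W$ is by definition the closure of this locus. If $p_1$ were a Weierstrass point on a general member of $\calP(k_1, \ldots, k_r, -l_1, \ldots, -l_s)$, then the entire image of this stratum in $\BM_{g,1}$ would lie in $W$; passing to the closure, the image of $\calP(k_1, \ldots, k_r, -l)$ would also lie in $W$, contradicting the hypothesis that $p$ is not a Weierstrass point on a general curve in $\calP(k_1, \ldots, k_r, -l)$.

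I do not foresee any genuine obstacle here: once Lemma~\ref{lem:merging-pole} is in hand, the only thing to verify is that the flat-geometric merging family descends to an honest limit in $\BM_{g,1}$ after forgetting the auxiliary marked points, which is immediate. If $\calP(k_1, \ldots, k_r, -l_1, \ldots, -l_s)$ happens to be reducible, the argument is applied one component at a time, using that the explicit construction of Lemma~\ref{lem:merging-pole} connects a chosen component of the finer stratum to some component of the coarser stratum $\calP(k_1, \ldots, k_r, -l)$, where the non-Weierstrass hypothesis is assumed.
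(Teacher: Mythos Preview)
Your proposal is correct and follows essentially the same approach as the paper: lift both strata to $\BM_{g,1}$ by marking only the pole of interest, use Lemma~\ref{lem:merging-pole} to place the coarser stratum in the closure of the finer one, and conclude by the closedness of the Weierstrass locus. The paper's proof is the one-line remark that the argument of Lemma~\ref{lem:weierstrass-two} goes through verbatim with Lemma~\ref{lem:merging-pole} replacing the zero-splitting input; your write-up simply spells out those steps (forgetful map, semicontinuity, and the component-by-component caveat) in more detail.
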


\begin{proof}
The same proof as in Lemma~\ref{lem:weierstrass-two} works, with merging poles instead of zeros 
guaranteed by Lemma~\ref{lem:merging-pole}. 
\end{proof}

\subsection{Curves with an elliptic tail}
\label{subsec:tail}

As already used in \cite{EisenbudHarrisWeierstrass} and \cite{Bullock}, degenerating to nodal curves with an elliptic tail provides a powerful method to study the geometry of general curves. 

\begin{proposition}
\label{prop:weierstrass-holomorphic}
Suppose $(X, z_1, \ldots, z_n)\in \calP(m_1-2, m_2, \ldots, m_n)$ is a general pointed curve such that $z_1$ is not a Weierstrass point and that $m_1\nmid g$. Then $z_1$ in a general curve $(C, z_1, \ldots, z_n)\in \calP(m_1, \ldots, m_n)$ is not a Weierstrass point. 
\end{proposition}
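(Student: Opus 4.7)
The plan is to specialize to a reducible curve with an elliptic tail and derive a contradiction from Eisenbud--Harris limit linear series. Construct the pointed nodal curve $C_0 = X \cup_q E$ in $\BM_{g,n}$, where $(X, q, z_2, \ldots, z_n)$ is the hypothesized curve with the point formerly called $z_1$ renamed as the node $q$, $E$ is a smooth elliptic curve attached to $X$ at $q$, and the new marked point $z_1 \in E$ is chosen to be a torsion point of exact order $m_1$ relative to $q$, i.e., $m_1 z_1 \sim m_1 q$ but $k z_1 \not\sim k q$ for $0 < k < m_1$ (such points are dense in the $m_1^2$-point torsion subgroup). With $g_X = g-1$, $g_E = 1$, $M_X = 2g-2-m_1$, and $M_E = m_1$, the conditions in Theorem~\ref{thm:canonical} reduce to $m_2 z_2 + \cdots + m_n z_n + (m_1 - 2)q \sim K_X$ on $X$ (the hypothesis) and $m_1 z_1 \sim m_1 q$ on $E$ (the torsion condition), so that $(C_0, z_1, \ldots, z_n) \in \BPP(m_1, \ldots, m_n)$.

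Assume for contradiction that $z_1$ is a Weierstrass point on every element of an open dense subset of $\calP(m_1, \ldots, m_n)$. Choose a one-parameter smoothing $\pi \colon \calC \to T$ of $C_0$ inside $\BPP(m_1, \ldots, m_n)$, with a section $Z_1$ extending $z_1$, such that $h^0(C_t, g Z_1(t)) \geq 2$ for $t \neq 0$. By the theory of \cite{EisenbudHarrisLimit}, the associated pencils specialize to a refined limit $g^1_g$ on the compact-type curve $C_0$, with aspects $(L_X, V_X)$ on $X$ and $(L_E, V_E)$ on $E$, each of degree $g$ and dimension $2$, whose vanishing sequences $(a_0^X, a_1^X)$ and $(a_0^E, a_1^E)$ at $q$ satisfy $a_i^X + a_{1-i}^E = g$ for $i = 0, 1$.

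The arithmetic hypothesis $m_1 \nmid g$ enters through the elliptic aspect. Semicontinuity of vanishing orders provides a section $s_\infty \in V_E$ vanishing to order at least $g$ at $z_1$; since $\deg L_E = g$, this forces $L_E \cong \OO_E(g z_1)$ and $\ord_q(s_\infty) = 0$, whence $a_0^E = 0$. The compatibility relations then give $a_1^X = g$, so $L_X \cong \OO_X(g q)$, together with $a_1^E = g - a_0^X$. Now a nonzero section of $V_E$ with $\ord_q \geq k$ lies in $H^0(E, g z_1 - k q)$, a line bundle of degree $g-k$ on the elliptic curve $E$; by Riemann--Roch this is automatically effective for $k \leq g-1$, while for $k=g$ it is trivial if and only if $g z_1 \sim g q$, i.e., if and only if $m_1 \mid g$. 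Since $m_1 \nmid g$, we deduce $a_1^E \leq g - 1$, hence $a_0^X \geq 1$ and $V_X \subset H^0(X, (g-1) q)$.

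Finally, the hypothesis that $q$ is not a Weierstrass point of the genus $(g-1)$ curve $X$ means that the Weierstrass gap sequence at $q$ is $\{1, 2, \ldots, g-1\}$, so $h^0(X, (g-1) q) = 1$, contradicting $\dim V_X = 2$. The main obstacle I expect is the third step: identifying that the arithmetic condition $m_1 \nmid g$ is exactly what forces $a_1^E < g$ on the elliptic tail, which through the compatibility relation transmits a positive vanishing at $q$ in the $X$-aspect, where the non-Weierstrass hypothesis on $X$ then closes the contradiction.
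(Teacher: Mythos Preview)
Your proof is correct and follows essentially the same approach as the paper: both construct the curve $X \cup_q E$ with $z_1 \in E$ an $m_1$-torsion to $q$, invoke Theorem~\ref{thm:canonical} to place it in $\BPP(m_1,\ldots,m_n)$, and then use that $g z_1 \not\sim g q$ (a consequence of $m_1 \nmid g$ and exact order $m_1$) to conclude $z_1$ is not a limit Weierstrass point. The only differences are presentational: the paper outsources the limit Weierstrass analysis to \cite[Theorem 5.45]{HarrisMorrison}, whereas you spell out the limit $g^1_g$ argument explicitly; and the paper adds a short paragraph handling the case where $\calP(m_1,\ldots,m_n)$ has two spin components (one must ensure the smoothing lands in the desired component, which follows from Theorem~\ref{thm:spin}), a subtlety you should also address since your contradiction hypothesis ``open dense subset of $\calP(m_1,\ldots,m_n)$'' implicitly treats the stratum as irreducible.
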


\begin{proof}
By assumption, we can take a general curve $(X, q, z_2, \ldots, z_n) \in \calP(m_1 -2, m_2, \ldots, m_n)$ such that $q$ is not a Weierstrass point in $X$. Attach to $X$ at $q$ an elliptic curve $E$ with an $m_1$-torsion point $z_1$ to $q$, i.e. $m_1 z_1 \sim m_1 q$ in $E$, such that 
$gz_1\not\sim gq$, see Figure~\ref{fig:weier-1}. 

\begin{figure}[h]
    \centering
    \psfrag{z2}{$z_2$}
    \psfrag{zn}{$z_n$}
    \psfrag{z1}{$z_1$}
     \psfrag{X}{$X$}
    \psfrag{q}{$q$}
     \psfrag{E}{$E$}
    \includegraphics[scale=0.8]{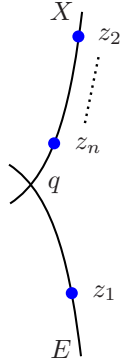}
    \caption{\label{fig:weier-1} A curve union an elliptic tail marked at an $m_1$-torsion point}
\end{figure}

Such $z_1$ in $E$ exists because $m_1\nmid g$. By Theorem~\ref{thm:canonical}, 
the union of $X$ and $E$ with the marked points $z_i$'s is contained in $\BPP(m_1, \ldots, m_n)$. 
Since $gz_1\not\sim gq$ in $E$, it implies that $z_1$ is not a limit of Weierstrass points degenerating from smooth curves to $X$ union $E$ (see \cite[Theorem 5.45]{HarrisMorrison}). It follows that $z_1$ in a general curve $(C, z_1, \ldots, z_n)\in \calP(m_1, \ldots, m_n)$ is not a Weierstrass point. 

One subtlety is when $\calP(m_1, m_2, \ldots, m_n)$ has two spin components (we have already discussed hyperelliptic components). In this case $\calP(m_1 -2, m_2, \ldots, m_n)$ also has two spin components. Since $X$ union $E$ 
is of compact type, the spin parity still holds by Theorem~\ref{thm:spin}, and is determined by that of each component. Therefore, we can take a general $X$ in each of the
spin components of $\calP(m_1 -2, m_2, \ldots, m_n)$ and carry out the above smoothing argument, which will lead to each of the spin components of $\calP(m_1, m_2, \ldots, m_n)$ (see also \cite{Bullock} for a similar discussion). 
\end{proof}

\begin{proposition}
\label{prop:weierstrass-meromorphic}
Let $(X, z_1, \ldots, z_r, p_1, \ldots, p_s)\in \calP(k_1, \ldots, k_r, -l_1 - 2, - l_2, \ldots, -l_s)$ be a general curve. 
Suppose that $p_1$ is not a Weierstrass point and that $l_1\nmid g$. Then $p_1$ in 
a general curve $(C, z_1, \ldots z_r, p_1, \ldots, p_s)\in \calP(k_1, \ldots, k_r, -l_1, \cdots, -l_s)$ is not a Weierstrass point. 
\end{proposition}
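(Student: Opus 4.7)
The plan is to mirror the proof of Proposition~\ref{prop:weierstrass-holomorphic}, replacing the elliptic-tail-carrying-a-zero construction by an elliptic-tail-carrying-a-pole construction. By the hypothesis we may start from a general
$$(X, z_1, \ldots, z_r, q, p_2, \ldots, p_s) \in \calP(k_1, \ldots, k_r, -l_1 - 2, -l_2, \ldots, -l_s),$$
obtained by renaming the pole $p_1$ of $X$ as a marker $q$, with $q$ not a Weierstrass point of $X$. I would then attach to $X$ at $q$ an elliptic curve $E$ carrying a new marked point $p_1 \in E$, and study the one-nodal pointed curve $C = X \cup_q E$ with markings $z_1, \ldots, z_r, p_1, \ldots, p_s$.

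Verifying $C \in \BPP(k_1, \ldots, k_r, -l_1, \ldots, -l_s)$ via Theorem~\ref{thm:twisted-meromorphic} is a direct computation. At the node $q$ the twisting coefficients work out to $l_1 + 2$ on the $X$-side and $-l_1$ on the $E$-side, summing to $-2$ as required. Relation~\eqref{eq:twisted-meromorphic} restricted to $X$ reproduces exactly the assumed stratum membership, while on $E$ it reduces to the torsion condition
$$l_1\, p_1 \sim l_1\, q \quad \text{on } E.$$
The component $X$ is polar at $q$ (the positive twist $l_1 + 2$ makes the differential have a pole there) and $E$ is polar through $p_1$, so Theorem~\ref{thm:twisted-meromorphic} applies. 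To find the required $p_1$, observe that the $l_1$-torsion subgroup of $(E, q)$ has $l_1^2$ elements while its intersection with the $g$-torsion subgroup is the $\gcd(l_1, g)$-torsion subgroup, of size $\gcd(l_1, g)^2$; the hypothesis $l_1 \nmid g$ gives $\gcd(l_1, g) < l_1$, so some $l_1$-torsion $p_1$ satisfies $g\, p_1 \not\sim g\, q$.

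To conclude I would invoke the classical limit-Weierstrass principle used in the proof of Proposition~\ref{prop:weierstrass-holomorphic} (see \cite[Theorem 5.45]{HarrisMorrison}): on a compact-type boundary curve of the form $X \cup_q E$ with $E$ an elliptic tail, a marked point $p \in E$ lies in the closure of the Weierstrass locus in $\BM_{g,1}$ only if $g\, p \sim g\, q$ on $E$. By construction $p_1$ violates this, so $C$ is not in the Weierstrass closure, and hence $p_1$ is not a Weierstrass point on a general smooth member of the target stratum. The subtlety of multiple spin components is handled as in Proposition~\ref{prop:weierstrass-holomorphic}: since $C$ is of compact type, Theorem~\ref{thm:spin} guarantees that each spin component of $\calP(k_1, \ldots, -l_1 - 2, \ldots)$ limits into a prescribed spin component of $\calP(k_1, \ldots, -l_1, \ldots)$, and the argument can be run in each component separately. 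The main obstacle is the sign and polar-component bookkeeping needed to invoke Theorem~\ref{thm:twisted-meromorphic} correctly; once that is set up, the limit-Weierstrass step is standard.
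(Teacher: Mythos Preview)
Your proposal is correct and follows essentially the same route as the paper's own proof: attach an elliptic tail $E$ at the pole $q$ of a general element of $\calP(k_1,\ldots,k_r,-l_1-2,-l_2,\ldots,-l_s)$, choose $p_1\in E$ an $l_1$-torsion that is not a $g$-torsion (possible since $l_1\nmid g$), invoke Theorem~\ref{thm:twisted-meromorphic} to land in $\BPP(k_1,\ldots,k_r,-l_1,\ldots,-l_s)$, and apply the limit-Weierstrass criterion \cite[Theorem~5.45]{HarrisMorrison}. Your added bookkeeping (twisting coefficients, polar-component check, torsion counting) is a welcome expansion of what the paper leaves implicit; note only that in the paper's convention the $t_{ij}$ at a node sum to $2$ rather than $-2$, though this does not affect the argument.
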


\begin{proof}
Take general
$(X, z_1, \ldots z_r, q, p_2, \ldots, p_s) \in \calP(k_1, \ldots, k_r, -l_1 - 2, - l_2, \ldots, -l_s)$ such that $q$ is not a Weierstrass point. Attach to $X$ at $q$ an elliptic curve $E$ with an $l_1$-torsion point $p_1$ to $q$, i.e. $l_1 p_1 \sim l_1 q$ in $E$, such that $gp_1 \not\sim gq$, see Figure~\ref{fig:weier-2}. 

\begin{figure}[h]
    \centering
    \psfrag{z1}{$z_1$}
    \psfrag{zr}{$z_r$}
    \psfrag{p1}{$p_1$}
     \psfrag{p2}{$p_2$}
      \psfrag{ps}{$p_s$}
     \psfrag{X}{$X$}
    \psfrag{q}{$q$}
     \psfrag{E}{$E$}
    \includegraphics[scale=0.8]{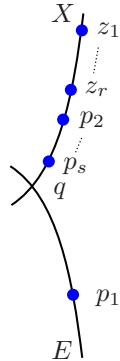}
    \caption{\label{fig:weier-2} A curve union an elliptic tail marked at an $l_1$-torsion point}
\end{figure}

Such $p_1$ exists because $l_1\nmid g$. 
By Theorem~\ref{thm:twisted-meromorphic}, the union of $X$ and $E$ with the marked points $z_i$'s and $p_j$'s is contained in $\BPP(k_1, \ldots, k_r, -l_1, \cdots, -l_s)$. Since $g p_1\not\sim gq$ in $E$, it 
 implies that $p_1$ is not a limit of Weierstrass points degenerating from smooth curves to $X$ union $E$, thus proving the desired claim. The case when the stratum consists of spin components can be treated by the same argument as in the proof of Proposition~\ref{prop:weierstrass-holomorphic}. 
 \end{proof}

Let us prove Theorem~\ref{thm:weierstrass}. For the reader's convenience, we recall its content as follows. 

\begin{theorem}
\label{thm:weierstrass-zero-pole}
Let $(C, z_1, \ldots, z_r, p_1, \ldots, p_s)\in \calP(k_1, \ldots, k_r, -l_1, \ldots, -l_s)$ (non-hyperelliptic component) be a general curve. 
Then $z_1$ is not a Weierstrass point. 
\end{theorem}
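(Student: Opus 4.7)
By relabeling we may assume $i=1$; the case $k_1 = 0$ is immediate since $z_1$ is then a free marked point. The plan parallels the proof of Proposition~\ref{prop:weierstrass-meromorphic}, but with the elliptic tail carrying the distinguished zero $z_1$ instead of a pole. First I would reduce via merging: by Lemma~\ref{lem:weierstrass-two-meromorphic} together with the zero-merging analog of Lemma~\ref{lem:weierstrass-two} (obtained by the same flat-surface collision as in \cite{KontsevichZorich}, extended to the meromorphic setting by \cite{Boissy}), it suffices to prove the statement for the three-pointed stratum $\calP(k_1, k', -l)$ with $k' = k_2 + \cdots + k_r$ and $l = l_1 + \cdots + l_s$.

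For the inductive step I would induct on $g$ (the base cases $g \leq 2$ being vacuous under non-hyperellipticity); assume $k_1 \geq 2$ with $k_1 \nmid g$. By the inductive hypothesis take a general $(X, z_2, q, p) \in \calP(k', k_1 - 2, -l)$ of genus $g - 1$ for which $q$ is not a Weierstrass point of $X$. Attach an elliptic curve $E$ to $X$ at $q$ and mark $z_1 \in E$ as a $k_1$-torsion point relative to $q$ satisfying $g z_1 \not\sim g q$, which exists precisely when $k_1 \nmid g$. A direct degree computation shows the twisted canonical on $E$ equals $k_1 z_1 - k_1 q$, so $E$ is a polar component with a pole of order $k_1$ at the node; combined with the polar $X$ (housing $p$), Theorem~\ref{thm:twisted-meromorphic} smooths $X \cup_q E$ into $\calP(k_1, k', -l)$. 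The constraint $g z_1 \not\sim g q$ prevents $z_1$ from being a limit of smooth Weierstrass points by \cite[Theorem 5.45]{HarrisMorrison}, and openness of the non-Weierstrass condition propagates the conclusion to a general curve.

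Two residual cases require separate treatment. The base $k_1 = 1$ falls outside the inductive step because $2g_E - 2 - M_E = -1$ is disallowed on the elliptic tail; here I would argue directly by splitting all other zeros into simple ones to obtain the enlarged stratum $\calP(1^{2g-2+l}, -l_1, \ldots, -l_s)$ of dimension $4g - 5 + l + s \geq 3g - 2 = \dim \MM_{g,1}$. The forget-all-but-$z_1$ projection to $\MM_{g,1}$ is then dominant (verified via Riemann-Roch on the fiber, cut out by one equation in $\operatorname{Pic}$ of expected codimension $g$), so the image meets the open non-Weierstrass locus, and Lemma~\ref{lem:weierstrass-two} propagates the conclusion back to the unsplit stratum. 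The divisibility obstruction $k_1 \mid g$ is circumvented by replacing the elliptic tail by a genus-$h$ tail $Y$ with $k_1 \nmid g h$: one chooses $(Y, z_1, q)$ satisfying $k_1 z_1 + (2h - 2 - k_1) q \sim K_Y$ with $z_1$ avoiding the finite locus of Weierstrass points of $(Y, q)$, so that \cite[Theorem 5.45]{HarrisMorrison} again prohibits $z_1$ as a smooth-Weierstrass limit. Finally, spin components are handled as in Proposition~\ref{prop:weierstrass-holomorphic}: since $X \cup_q E$ is of compact type, Theorem~\ref{thm:spin} decomposes the limit parity into the sum on $X$ and on $E$, and varying $X$ over the spin components of the reduced stratum reaches both parities.

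The main obstacle will be the higher-genus tail replacement when $k_1 \mid g$: one must find $h$ and a triple $(Y, z_1, q)$ realizing the prescribed twisted-canonical relation on $Y$ with $z_1$ not a limit-Weierstrass point relative to $q$, and verify that such a choice exists for every divisibility obstruction encountered across the induction.
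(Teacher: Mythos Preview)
Your inductive framework is essentially the same as the paper's, but you make the reduction harder than necessary by merging only $z_2,\ldots,z_r$ and keeping $z_1$ of order $k_1$ separate. The paper instead merges \emph{all} zeros (including $z_1$) and all poles, reducing to $\calP(k,-l)$ with $k=\sum_i k_i = 2g-2+l$. This is still a valid reduction for the statement about $z_1$: in $\BM_{g,1}$ (marking only $z_1$), the locus $\calP(k,-l)$ lies in the closure of the image of the original stratum by colliding $z_2,\ldots,z_r$ into $z_1$, so non-Weierstrassness of the single zero in $\calP(k,-l)$ propagates back.

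The payoff is that $k = 2g-2+l > g$ for every $g\geq 2$ (since $l\geq 1$), so the divisibility obstruction $k\mid g$ \emph{never arises}, and the edge case $k=1$ only occurs when $g=1$, which is the trivial base case. Thus both of your residual cases---the $k_1=1$ dominance argument and the higher-genus tail replacement for $k_1\mid g$---are unnecessary. Your own final paragraph correctly flags the higher-genus tail as the main obstacle, and indeed the condition you write (``$k_1\nmid gh$'') is not quite the right one, nor is it clear how to simultaneously arrange the twisted-canonical relation on $Y$ and the limit-Weierstrass criterion of \cite[Theorem~5.45]{HarrisMorrison} on a tail of genus $h>1$; that criterion is specific to elliptic tails. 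So the gap is real, but it disappears once you merge $z_1$ with the other zeros.
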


\begin{proof}
Let $k = \sum_{i=1}^r$ and $l = \sum_{j=1}^s$. Then $2g-2 = k - l$. Analogous to Lemmas~\ref{lem:weierstrass-two} and~\ref{lem:weierstrass-two-meromorphic}, it suffices to prove the result for $\calP(k, -l)$. Do induction on $g$. The case $g=1$ is trivial, because there is no Weierstrass point on an elliptic curve. Suppose the claim holds for $g-1$. Take $(X, q, p) \in \calP(k-2, -l)$ such that $q$ is not a Weierstrass point. Attach to $X$ an elliptic curve $E$ at $q$. Take a $k$-torsion $z$ with respect to $q$ in $E$ such that $z$ and $q$ are not $g$-torsion to each other. This is feasible because $k = 2g-2+l > g$. By Theorem~\ref{thm:twisted-meromorphic} we have $(X\cup_q E, z, p) \in \BPP(k, -l)$, and $z$ is not a limit Weierstrass point, thus proving the desired result by induction. Again, the case when the stratum consists of spin components can be treated by the same argument as in Proposition~\ref{prop:weierstrass-holomorphic}. 
\end{proof}

\subsection{Chains of elliptic curves}
\label{subsec:chain}

For the sake of completeness, let us analyze limits of Weierstrass points as smooth curves degenerate to 
a chain $C$ of $g$ elliptic curves $E_1, \ldots, E_g$. Suppose $E_i \cap E_{i+1} = q_i$ for $i=1,\ldots, g-1$. Let $q_g \in E_g$ be a smooth point, see Figure~\ref{fig:weier-elliptic-chain}. 

 \begin{figure}[h]
    \centering
   \psfrag{E1}{$E_1$}
    \psfrag{E2}{$E_2$}
    \psfrag{Eg-1}{$E_{g-1}$}
     \psfrag{Eg}{$E_g$}
      \psfrag{z}{$q_g$}
    \psfrag{q1}{$q_1$}
     \psfrag{qg-1}{$q_{g-1}$}
    \includegraphics[scale=1.0]{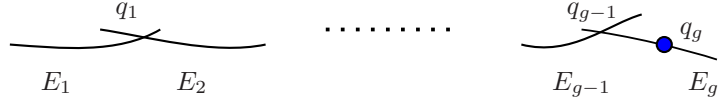}
       \caption{\label{fig:weier-elliptic-chain} A chain of elliptic curves marked in the last component} 
 \end{figure}      

Denote by $t(x, y)$ the \emph{torsion order} of $x$ and $y$ in an elliptic curve, i.e. the minimal positive integer $t$ such that $t x \sim t y$. If such $t$ does not exist, set $t(x, y) = \infty$. In particular, let $t_i = t (q_{i-1}, q_i)$ in $E_i$ for $i=2,\ldots, g$. 

\begin{proposition}
\label{prop:weierstrass-elliptic-chain}
In the above setting, $q_g$ is a limit Weierstrass point if and only if there exists an integer sequence 
$g = k_g \geq k_{g-1} \geq \cdots \geq k_2 \geq k_1 \geq 2$ such that  
$k_{i} q_i - k_{i-1} q_{i-1}$ is an effective divisor class in $E_i$ for $i = 2, \ldots, g$.  
\end{proposition}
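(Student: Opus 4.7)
The plan is to recast ``$q_g$ is a limit Weierstrass point'' as the existence of a \emph{smoothable} twisted canonical divisor $D$ on $C$ whose restriction to $E_g$ satisfies $D|_{E_g}\geq g\,q_g$, and then to encode such divisors combinatorially by the sequence $(k_i)$. Indeed, by Riemann--Roch on a smooth genus $g$ curve, $p_t$ is a Weierstrass point iff there is an effective canonical divisor with $\geq g$ at $p_t$; passing to the limit in a smoothing family $(C_t,p_t)\to (C,q_g)$ yields a twisted canonical divisor $D$ on $C$ with the desired multiplicity at $q_g$. Conversely, every such $D$ can be smoothed via Theorem~\ref{thm:canonical-more} provided that all but one of the $E_i$ are polar components, which will hold for our construction.

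For the $(\Rightarrow)$ direction, starting from $D\geq g\,q_g$, set $d_i=\deg(D|_{E_i})$ and define $k_g=g$, $k_i=2g-\sum_{j>i}d_j$ for $i<g$. Since $d_i\geq 0$ and $\sum_i d_i=2g-2$, the sequence is monotone with $k_1=2+d_1\geq 2$, and $k_{g-1}=2g-d_g\leq g=k_g$ because $d_g\geq g$. To extract the effectivity condition on $E_i$, restrict the twisted canonical line bundle of $D$ to $E_i$, where it has the form $\OO_{E_i}(a_iq_{i-1}+b_iq_i)$ with $b_{i-1}+a_i=2$ and $a_i+b_i=d_i$. Solving the recursion starting from $a_1=0$ gives $a_i=2(i-1)-(d_1+\cdots+d_{i-1})$, and a direct manipulation shows that the class of $Z_i=D|_{E_i}$ differs from $k_iq_i-k_{i-1}q_{i-1}$ by a principal divisor, so the latter is an effective class on $E_i$.

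For the $(\Leftarrow)$ direction, given the sequence, set $d_i=k_i-k_{i-1}$ for $2\leq i\leq g-1$, $d_1=k_1-2$, and $d_g=2g-k_{g-1}$; the total is $2g-2$. Define the twist coefficients $a_i,b_i$ by the above recursion and choose an effective representative $Z_i$ of class $a_iq_{i-1}+b_iq_i$ on each $E_i$: when $d_i\geq 1$ this is automatic on the elliptic curve, and when $d_i=0$ it is exactly the effectivity hypothesis $k_iq_i-k_{i-1}q_{i-1}\sim 0$ (up to a principal divisor derived from the twist formula) that guarantees triviality of the class. Put $Z_g=g\,q_g+R_g$ with $R_g$ an effective representative of $a_g\,q_{g-1}-g\,q_g$, which exists because $a_g\geq g$. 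The resulting $D=\sum Z_i$ is a twisted canonical divisor with $D\geq g\,q_g$, and since $a_i\geq k_{i-1}\geq 2$ for every $i\geq 2$, only $E_1$ is a holomorphic component. Theorem~\ref{thm:canonical-more} then smooths $D$ into $\BPP(g,1,\ldots,1)$, producing a family of ordinary Weierstrass points degenerating to $q_g$.

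The hard part is the compatibility between the combinatorial effectivity condition in the statement and the concrete linear-equivalence condition dictated by the twisting recursion: the same $(k_i)$ determines a unique degree vector $(d_i)$ but, on components where $d_i=0$, the two torsion conditions have different moduli ($k_i$ versus $2i-k_{i-1}$). Reconciling them requires either choosing the ``correct'' $(k_i)$ among the possibilities (often by preferring a strictly increasing run of $k$'s near a given component) or working directly with a different, equivalent recursion for the twists. Verifying that the hypotheses in the statement always admit such a choice, and that the two recursive formulas produce the same effective-class condition up to the principal divisor computed above, is the central technical point; once this is done the smoothability step is a direct application of Theorem~\ref{thm:canonical-more}.
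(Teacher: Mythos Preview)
The paper's proof is a two–line argument via admissible covers: $q_g$ is a limit Weierstrass point precisely when there is a degree-$g$ admissible cover totally ramified at $q_g$, and letting $k_i$ be the common ramification index at the node $q_i$ immediately yields the effectivity conditions $k_iq_i-k_{i-1}q_{i-1}\geq 0$ on each $E_i$. Your approach through twisted canonical divisors is Serre-dual to this (you are tracking a section of $K$ with a high-order zero rather than a $g^1_g$), and it is not merely a cosmetic reformulation: the translation between the two pictures is not the one you write down, and this produces concrete errors.

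In the $(\Rightarrow)$ direction you assert that $Z_i=D|_{E_i}$ differs from $k_iq_i-k_{i-1}q_{i-1}$ by a principal divisor. With your own formulas $a_i=2(i-1)-\sum_{j<i}d_j$ and $k_i=2g-\sum_{j>i}d_j$ one finds
\[
Z_i-(k_iq_i-k_{i-1}q_{i-1})\ \sim\ (a_i+k_{i-1})q_{i-1}-(k_i-b_i)q_i\ =\ 2i\,(q_{i-1}-q_i),
\]
which is \emph{not} principal on a general elliptic curve. Consequently, when $d_i=0$ your twisted-divisor condition reads $t_i\mid (2i-k_i)$, whereas the statement requires $t_i\mid k_i$; these are inequivalent. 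Your final paragraph flags this discrepancy as the ``central technical point'' but does not resolve it, and it cannot be resolved by a change of recursion alone: the two conditions genuinely differ on individual components, and only the \emph{existential} statements over all admissible sequences could possibly coincide, which you do not show.

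The $(\Leftarrow)$ direction has a parallel problem. You claim $a_i\geq k_{i-1}\geq 2$ in order to force $E_1$ to be the unique holomorphic component and invoke Theorem~\ref{thm:canonical-more}. But $a_i=2i-k_{i-1}$, so $a_i\geq k_{i-1}$ is equivalent to $k_{i-1}\leq i$, which the hypothesis does not guarantee (take $k_1=\cdots=k_g=g$ with all $t_i\mid g$; then $a_2=4-g$ can be negative, your $Z_2$ does not exist, and Theorem~\ref{thm:canonical-more} does not apply). The admissible-cover argument in the paper sidesteps all of this.
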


\begin{proof}
Note that $q_g$ is a limit Weierstrass point if and only if there exists an admissible cover of degree $g$, totally ramified at $q_g$ as in Figure~\ref{fig:weier-cover}. 
\begin{figure}[h]
    \centering
    \psfrag{Eg}{$E_g$}
    \psfrag{Eg-1}{$E_{g-1}$}
     \psfrag{Eg-2}{$E_{g-2}$}
      \psfrag{qg}{$q_g$}
       \psfrag{g}{$g$ sheets}
     \psfrag{qg-1}{$q_{g-1}$}
    \psfrag{qg-2}{$q_{g-2}$}
    \includegraphics[scale=0.7]{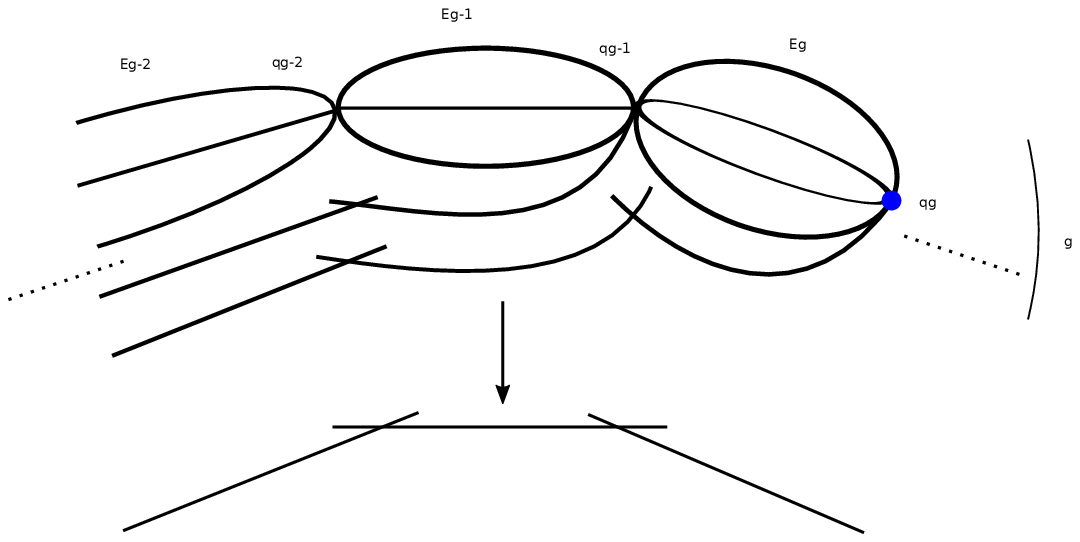}
    \caption{\label{fig:weier-cover} An admissible cover totally ramified at $q_g$}
\end{figure}

Suppose that $k_i$ encodes the ramification order of the cover at $q_i \in E_{i+1}$. Start from $E_g$ and keep track of the cover along the chain at each node $q_{g-1}, \ldots, q_1$. The desired result follows right away. 
\end{proof}

\begin{remark}
If $k_i > k_{i-1}$, the effectiveness of $k_{i} q_i - k_{i-1} q_{i-1}$ always holds by the group structure of elliptic curves. On the other hand if 
$k_i = k_{i-1}$, then the effectiveness condition implies that $t_i \mid k_i$. 
\end{remark}

\begin{corollary}
\label{cor:weierstrass-elliptic-chain}
In the above setting, if $t_i \mid i$ for some $i$, then $q_g$ is a limit Weierstrass point. 
\end{corollary}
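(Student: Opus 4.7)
My plan is to apply Proposition~\ref{prop:weierstrass-elliptic-chain} directly. That criterion reduces the claim to constructing a single admissible sequence
\[
g = k_g \geq k_{g-1} \geq \cdots \geq k_2 \geq k_1 \geq 2
\]
such that $k_j q_j - k_{j-1}q_{j-1}$ is effective in $E_j$ for every $j = 2, \dots, g$. The key observation is that on an elliptic curve every divisor class of positive degree is effective (by Riemann-Roch, $h^0(L) = \deg L$ when $\deg L \geq 1$). Hence the effectiveness condition is automatic at every step where $k_j > k_{j-1}$, and the only potentially non-trivial constraint occurs when $k_j = k_{j-1}$, in which case $k_j q_j - k_j q_{j-1}$ is effective iff $t_j \mid k_j$.

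This suggests the following explicit choice of sequence, designed so that equality occurs only at the index $i$ supplied by the hypothesis. I would set
\[
k_j = \begin{cases} j & \text{if } i \leq j \leq g, \\ j+1 & \text{if } 1 \leq j \leq i-1. \end{cases}
\]
Then $k_g = g$ and $k_1 = 2$ as required, the sequence is weakly decreasing as $j$ decreases (with strict drops everywhere except at the single transition $k_i = k_{i-1} = i$), and all effectiveness conditions with strict inequality are automatic. The lone equality case $j = i$ requires $t_i \mid i$, which is precisely our hypothesis.

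The main (mild) obstacle is handling the two boundary cases $i = 2$ and $i = g$ cleanly, to make sure the sequence still meets the constraints $k_1 \geq 2$ and $k_g = g$. For $i = g$ the recipe collapses to $k_g = k_{g-1} = g$ and $k_j = j+1$ for $j < g$, while for $i = 2$ it collapses to $k_1 = k_2 = 2$ and $k_j = j$ for $j \geq 2$; in both cases the conditions of Proposition~\ref{prop:weierstrass-elliptic-chain} are verified as above, completing the proof.
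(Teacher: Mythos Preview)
Your proof is correct and follows essentially the same approach as the paper: you construct the identical sequence $k_j = j$ for $j \geq i$ and $k_j = j+1$ for $j \leq i-1$, then invoke Proposition~\ref{prop:weierstrass-elliptic-chain}. Your additional justification (via Riemann--Roch) that positive-degree divisor classes on an elliptic curve are automatically effective, together with your treatment of the boundary cases $i=2$ and $i=g$, simply makes explicit what the paper leaves to the reader.
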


\begin{proof}
Take $k_j = j+1$ for $j\leq i-1$ and $k_l = l$ for $l \geq i$. 
Since $i q_i \sim i q_{i-1}$, the result follows from Proposition~\ref{prop:weierstrass-elliptic-chain}. 
\end{proof}

\section{Boundary of the minimal stratum in genus three}
\label{sec:h(4)}

In genus three, the minimal stratum $\calP(4)$ consists of two connected components $\calP(4)^{\hyp}$ and $\calP(4)^{\odd}$. Denote by $z$ the unique zero. We would like to classify which stable nodal curve $(C, z)\in \BM_{3,1}$ is contained in $\BPP(4)^{\hyp}$ and in $\BPP(4)^{\odd}$. As we have seen before, using admissible double covers provides us a good understanding of $\BPP(4)^{\hyp}$. Moreover, in this case $\calP(4)^{\even}$ coincides 
with $\calP(4)^{\hyp}$, hence the result in Theorem~\ref{thm:spin} also applies. Below we will classify boundary points of $\BPP(4)^{\hyp}$ and $\BPP(4)^{\odd}$ for curves with at most two nodes. We separate the discussion by the number of nodes and the topological type of a curve. 

\subsection{Curves with one node}

There are three cases for $(C, z)\in \BM_{3,1}$ such that $C$ has one node, see Figure~\ref{fig:g3-onenode}. 
\begin{figure}[h]
    \centering
    \psfrag{z}{$z$}
    \psfrag{q}{$q$}
    \psfrag{A}{$C: g=2$}
    \psfrag{B}{$C_1: g=1$}
    \psfrag{C}{$C_2: g=2$}
    \includegraphics[scale=0.9]{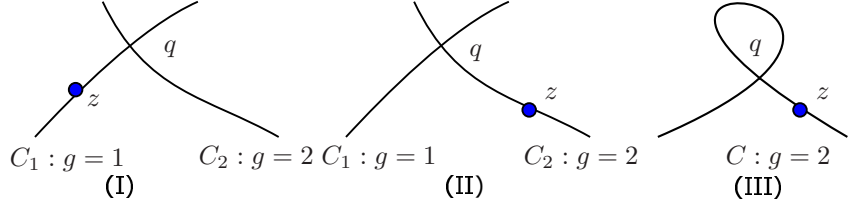}
    \caption{\label{fig:g3-onenode} Stable pointed genus three curves with one node}
\end{figure}

Here we denote by $q$ the node, $z$ the marked point, and label the \emph{geometric genus} of each component of $C$. 

\begin{itemize}
\item Case (I). By Theorem~\ref{thm:canonical}, $(C, z) \in \BPP(4)$ if and only if $4z\sim 4q$ in $C_1$ and $2q\sim K_{C_2}$ in $C_2$.  
In particular, the condition on $C_2$ implies that $q$ is a Weierstrass point of $C_2$. It suffices to distinguish which component of  $\BPP(4)$ contains
$(C,z)$. As in the proof of Theorem~\ref{thm:spin}, the limit spin structure on $C$ is $(\OO_{C_1}(2z-2q), \OO_{C_2}(q))$ (we drop the exceptional $\bbP^1$), hence the parity is even if and only if $2z\sim 2q$ in $C_1$. We thus conclude that $(C, z) \in \BPP(4)^{\hyp}$ if and only if $2z\sim 2q$ in $C_1$ and $2q\sim K_{C_2}$ in $C_2$, and $(C, z) \in \BPP(4)^{\odd}$ if and only if $4z\sim 4q$, 
$2z\not\sim 2q$, and $2q\sim K_{C_2}$. 

\item Case (II). As above, $(C, z) \in \BPP(4)$ if and only if $4z\sim 2q + K_{C_2}$ in $C_2$. The limit spin structure on $C$ is 
$(\OO_{C_1}, \OO_{C_2}(2z-q))$. Its parity 
is even if and only if $2z-q$ is effective in $C_2$. Note that $2z-q$ is effective if and only if $z$ is a Weierstrass point of $C_2$, i.e. $2z\sim K_{C_2}$, which further implies that $2z\sim 2q$. We thus conclude that $(C, z) \in \BPP(4)^{\hyp}$ if and only if 
$2z\sim 2q\sim K_{C_2}$, and $(C, z) \in \BPP(4)^{\odd}$ if and only if $4z\sim 2q + K_{C_2}$ and $2z\not\sim K_{C_2}$. 

\item Case (III). Let $\wt{C}$ be the normalization of $C$, and $q_1, q_2\in \wt{C}$ the preimages of $q$. A stable one-form on $\wt{C}$ with a zero of multiplicity four at $z$ corresponds to a section of $K_{\wt{C}}(q_1 + q_2)$. By Theorem~\ref{thm:hodge}, $(C, z)\in \BPP(4)$ if and only 
if $4z\in K_{\wt{C}} + q_1 + q_2$ in $\wt{C}$. The limit spin structure on $C$ is given by $\OO_C(2z)$. Hence it is even if and only if 
$C$ is a (degenerate) hyperelliptic curve and $z$ is a ramification point of the corresponding admissible double cover, i.e. if and only if 
$2z\sim q_1 + q_2\sim K_{\wt{C}}$. We thus conclude that $(C, z) \in \BPP(4)^{\hyp}$ if and only if 
$q_1 + q_2\sim 2z\sim K_{\wt{C}}$, and $(C, z) \in \BPP(4)^{\odd}$ if and only if $4z\in K_{\wt{C}} + q_1 + q_2$ but $2z\not \sim K_{\wt{C}}$.  \end{itemize}

\subsection{Curves with two nodes}

There are ten cases for $(C, z)\in \BM_{3,1}$ such that $C$ has two nodes, see Figure~\ref{fig:g3-twonode}.  
\begin{figure}[h]
    \centering
    \psfrag{z}{$z$}
    \psfrag{q1}{$q_1$}
    \psfrag{q2}{$q_2$}
    \psfrag{a}{$C: g=1$}
    \psfrag{b}{$C_0: g=0$}
     \psfrag{c}{$C_1: g=1$}
    \psfrag{d}{$C_2: g=2$}
    \psfrag{e}{$C_2: g=1$}
    \psfrag{f}{$C_3: g=1$}
    \psfrag{g}{$C_1: g=0$}
    \includegraphics[scale=0.85]{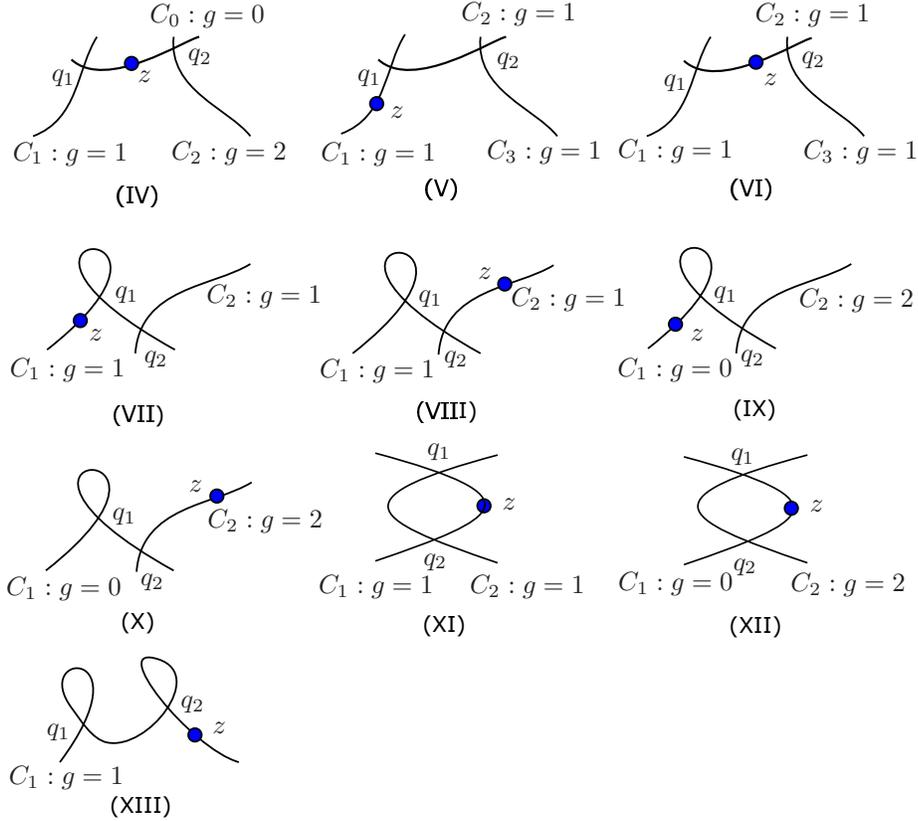}
    \caption{\label{fig:g3-twonode} Stable pointed genus three curves with two nodes}
\end{figure}

Again, we denote by $q_1, q_2$ the two nodes, $z$ the marked point, and label the \emph{geometric genus} of each component of $C$

\begin{itemize}
\item Case (IV). Note that $C$ is of compact type. If $(C, z)\in \BPP(4)$, 
the limit spin structure on $C$ would be $(\OO_{C_0}(-1), \OO_{C_1}, \OO_{C_2}(q_2))$, hence its parity 
is even, which implies that $(C, z)\in \BPP(4)^{\hyp}$. Nevertheless, by analyzing possible admissible double covers $f$ on $C$, it follows that 
$f|_{C_0}$ has ramification at $q_1, q_2$ and $z$, contradicting the fact that on $C_0 \cong \bbP^1$ there are only two ramification points
by Riemann-Hurwitz. We thus conclude that $\BPP(4)$ is disjoint from the locus of curves of type (IV).

\item Case (V). If $(C, z)\in \BPP(4)$, necessarily we have $4z\sim 4q_1$ in $C_1$ and $2q_1\sim 2q_2$ in $C_2$ by Proposition~\ref{prop:limit-canonical}. In addition, 
the limit spin structure on $C$ is $(\OO_{C_1}(2z-2q_1), \OO_{C_2}(q_1 - q_2), \OO_{C_3})$. It is even if and only if 
$2z \sim 2q_1$. In this case, again using admissible covers we see that $(C, z)\in \BPP(4)^{\hyp}$ if and only if 
$2z\sim 2q_1$ in $C_1$ and $2q_1 \sim 2q_2$ in $C_2$. Moreover, $(C, z)\in \BPP(4)^{\odd}$ if and only if 
$2q_1 \sim 2q_2$ in $C_2$, $4z\sim 4q_1$ but $2z\not\sim 2q_1$ in $C_1$, where the ``if'' part follows from 
Theorem~\ref{thm:canonical-more} because $C_3$ is the only holomorphic component. 

\item Case (VI). If $(C, z)\in \BPP(4)$, necessarily we have $4z\sim 2z_1 + 2z_2$ in $C_2$. The limit spin structure on $C$ 
is $(\OO_{C_1}, \OO_{C_2}(2z- q_1 - q_2), \OO_{C_3})$, hence it is odd if and only if $2z\sim q_1 + q_2$ in $C_2$. Using admissible covers we see that $(C, z)\in \BPP(4)^{\hyp}$ if and only if $2z\sim 2q_1 \sim 2q_2$ in $C_2$. On the other hand, 
if $2z\sim q_1 + q_2$, then certainly $2z\not\sim 2q_i$ for $i=1,2$. Furthermore, if there exists a limit $g^2_4$ on $C$ such that 
the vanishing sequences of its aspect on $C_2$ at $z$, $q_1$ and $q_2$ are $(0,1,4)$, $(0,2,3)$ and $(0,2,3)$, respectively, such $g^2_4$ imposes 
two conditions to pointed curves of type (VI), matching the adjusted Brill-Noether number by imposing vanishing sequence $(0,1,4)$ to $z$. Hence 
by the smoothability criterion of limit linear series (\cite[Theorem 3.4]{EisenbudHarrisLimit}) such $(C, z)$ appears in $\BPP(4)^{\odd}$. 

\item Case (VII). Note that $C$ is of pseudocompact type, so the discussion of twisted canonical divisors and limit spin structures also applies. The situation is very similar to Case (II). Let $q'_1$ and $q_1''$ be the preimages of $q_1$ in $\wt{C}_1$. Then $(C, z) \in \BPP(4)^{\hyp}$ if and only if 
$2z\sim 2q_2\sim q_1' + q_1''$ in $\wt{C}_1$, again easily seen by admissible covers. On the other hand, 
$(C, z) \in \BPP(4)^{\odd}$ if and only if $4z\sim 2q_2 + q_1' + q_1''$ and $2z\not\sim 2q_2$, where the ``if'' part follows from 
plumbing a cylinder at $q_1$ in the proof of Theorem~\ref{thm:hodge} and applying Theorem~\ref{thm:canonical}. 

\item Case (VIII). This is similar to Case (I). We conclude that $(C, z) \in \BPP(4)^{\hyp}$ if and only if $2q_2\sim q_1' + q_1''$ in $C_1$ and $2z\sim 2q_2$ in $\wt{C}_2$ by admissible covers, and $(C, z) \in \BPP(4)^{\odd}$ if and only if $2q_2\sim q_1' + q_1''$ in $\wt{C}_1$, $4z\sim 4q_2$ and $2z\not\sim 2q_2$ in $C_2$. 

\item Case (IX). This is similar to Case (I). We conclude that $(C, z) \in \BPP(4)^{\hyp}$ if and only if $2q_2\sim K_{C_2}$ and $q_1', q_1''$ are conjugate under the double cover induced by $2z\sim 2q_2$ on $\wt{C}_1$. On the other hand, $(C, z) \in \BPP(4)^{\odd}$ if and only if $2q_2\sim K_{C_2}$ and $z, q_2$ are primitive $4$-torsions in the rational nodal curve $C_1$. 

\item Case (X). This is similar to Case (II). We conclude that $(C, z) \in \BPP(4)^{\hyp}$ if and only if $2z\sim 2q_2$ in $C_2$, 
and $(C, z) \in \BPP(4)^{\odd}$ if and only if $4z\sim 2q_2 + K_{C_2}$ and $2z\not\sim K_{C_2}$. 

\item Case (XI). Suppose $(C, z)\in \BPP(4)$. The 
limit spin structure is $(\OO_{C_1}(2z-q_1-q_2), \OO_{\bbP^1}(1), \OO_{\bbP^1}(1), \OO_{C_2})$, after blowing up $q_1, q_2$ and inserting two exceptional $\bbP^1$-components. The parity is determined by 
$h^0(C_1, 2z-q_1 - q_2) + h^0(C_2, \OO)$, hence it is even if and only if $2z\sim q_1 + q_2$ in $C_1$, i.e. if and only if 
$(C_1, z)\in \BPP(4)^{\hyp}$ by using admissible covers. Now suppose $(C, z)\in \BPP(4)^{\odd}$ and $2z\not\sim q_1 + q_2$. 
We claim that $4z\sim 2q_1 + 2q_2$ in $C_1$, which can be seen as follows. Embed 
$C_1$ to $\bbP^3$ by the linear system $|2q_1 + 2q_2|$ as an elliptic normal quartic. Let $H$ be the plane in $\bbP^3$ that cuts out $2q_1 + 2q_2$ 
in $C_1$. Choose a point in $H$ and project $C_1$ from it to $\bbP^2$. The image of $C_1$ is a plane quartic $C'_1$ with a tacnode $q$, and $H$ maps to a line that passes through $q$ and is tangent to both branches of $C'_1$ at $q$. When smooth plane quartics with a hyperflex degenerate to $C'_1$, the limit hyperflex line cuts out $4z$, hence we conclude that $4z\sim 2q_1 + 2q_2$. Conversely if $4z\sim 2q_1 + 2q_2$ in $C_1$, 
since the space of plane quartics with a hyperflex is irreducible, the tacnodal elliptic quartic model $C'_1$ of $C_1$ appears as a limit 
of smooth plane quartics with a hyperflex, with $z$ as the limit hyperflex point. Running stable reduction to a general pencil in this degeneration to resolve the tacnode, the stable limit is of type (XI) and all possible elliptic bridges $C_2$ show up. Moreover, in this case we see that $2z\not\sim q_1 + q_2$. Otherwise $2z + q_1 + q_2 \sim 2q_1 + 2q_2$ would imply that the tangent line to $C'_1$ at $z$ cuts out $2z+q_1 + q_2$, contradicting that 
$z$ is a hyperflex of $C'_1$. Hence it belongs to $\BPP(4)^{\odd}$, and not to $\BPP(4)^{\hyp}$. 

\item Case (XII). By admissible covers we see that $(C, z)\in \BPP(4)^{\hyp}$ if and only if $q_1 + q_2 \sim K_{C_2}$. In this case it also belongs 
to $\BPP(4)^{\odd}$ as proved in Theorem~\ref{thm:double-conic}. Suppose now 
$(C, z)\in \BPP(4)^{\odd}$ and $C$ is not hyperelliptic, i.e. $q_1 + q_2 \not\sim K_{C_2}$. Let $C'$ be the irreducible nodal curve by blowing down $C_1$, i.e. identifying $q_1$ and $q_2$ in $C_2$ as a node $q$. Since $C'$ is not hyperelliptic, it admits a canonical embedding as a plane nodal quartic. Consider smooth curves in $\calP(4)^{\odd}$ as plane quartics with a hyperflex line. When they degenerate to $C'$, in order to have contact order four to $C'$ at $q$, the limit $L$ of hyperflex lines cuts out either $3q_1 + q_2$ or $q_1 + 3q_2$ in $C_2$. Without loss of generality, suppose the former occurs. It then implies that $3q_1 + q_2 \sim K_{C_2}(q_1 + q_2)$, hence $q_1$ is a Weierstrass point of $C_2$. Conversely if 
$2q_1 \sim K_{C_2}$ (or $2q_2\sim K_{C_2}$ by symmetry), since the space of plane quartics that have contact order four to $L$ at $q$ is irreducible, such a $(C,z)$ appears as a limit of smooth plane quartics with a hyperflex, hence it is contained in $\BPP(4)^{\odd}$. 

\item Case (XIII). Using admissible covers, we obtain that $(C, z) \in \BPP(4)^{\hyp}$ if and only if 
$q'_1 + q''_1 \sim q'_2 + q''_2\sim 2z$ in $\wt{C}$. On the other hand, $(C, z) \in \BPP(4)^{\odd}$ if and only if 
$4z\sim q'_1 + q''_1 + q'_2 + q''_2$, the corresponding meromorphic differential in $\wt{C}$ has residues summing up to zero at $q'_i, q_i''$ for both $i=1,2$, and $2z\not\sim q_i' + q_i''$ for either $i$, where the ``if'' part follows from plumbing a cylinder at $q_1$ and at $q_2$.  
\end{itemize}

It appears possible to classifying curves with more than two nodes that are contained in $\BPP(4)$. But the number of topological types of curves with three nodes or more increases significantly, so here we choose to end our discussion. 



\begin{thebibliography}{Ab123}

\bibitem[ACGH]{ACGH}
E. Arbarello, M. Cornalba, P. A. Griffiths, and J. Harris, Geometry of algebraic curves. Vol. I. 
\emph{Grundlehren der Mathematischen Wissenschaften [Fundamental Principles of Mathematical Sciences]}, {\bf 267}. 
Springer-Verlag, New York, 1985. 

\bibitem[A]{Atiyah}
M. Atiyah, Riemann surfaces and spin structures, 
\emph{Ann. Sci. \'{E}cole Norm. Sup. (4)} {\bf 4} (1971), 47--62. 

\bibitem[Bo]{Boissy}
C. Boissy, Connected components of the moduli space of meromorphic differentials,  
\emph{Comm. Math. Helv.}, to appear. 

\bibitem[Bu]{Bullock}
E. Bullock, Subcanonical points on algebraic curves, 
\emph{Trans. Amer. Math. Soc.} {\bf 365} (2013), no. 1, 99--122. 


\bibitem[CM1]{ChenMoellerAbelian}
D. Chen and M. M\"oller, Nonvarying sums of Lyapunov exponents of Abelian differentials in low genus, 
\emph{Geom. Topol.} {\bf 16} (2012), no. 4, 2427--2479. 

\bibitem[CM1]{ChenMoellerQuadratic}
D. Chen and M. M\"oller, Quadratic differentials in low genus: exceptional and non-varying strata,  
\emph{Ann. Sci. \'Ec. Norm. Sup\'er.} {\bf 16} (2014), no. 2, 309--369.  

\bibitem[Ch]{Chung}
F. Chung, Spectral graph theory, 
\emph{CBMS Regional Conference Series in Mathematics} {\bf 92}, 1997. 

\bibitem[Co]{Cornalba}
M. Cornalba, Moduli of curves and theta-characteristics,
 \emph{Lectures on Riemann surfaces (Trieste, 1987)}, 560--589, World Sci. Publ., Teaneck, NJ, 1989. 
 
\bibitem[EKZ]{EKZ} 
 A. Eskin, M. Kontsevich, and A. Zorich, Sum of Lyapunov exponents of the Hodge bundle with respect to the Teichm\"uller geodesic flow, 
 \emph{Publ. Math. Inst. Hautes \'{E}tudes Sci.} {\bf 120} (2014), 207--333. 
 
\bibitem[EMa]{EskinMasur}
A. Eskin and H. Masur, Asymptotic formulas on flat surfaces,  
\emph{Ergodic Theory Dynam. Systems} {\bf 21} (2001), no. 2, 443--478. 

\bibitem[EMa]{EskinMirzakhani}
A. Eskin and M. Mirzakhani, Invariant and stationary measures for the $\SL(2,\RR)$ action on Moduli space, arXiv:1302.3320. 

\bibitem[EMM]{EskinMirzakhaniMohammadi}
A. Eskin, M. Mirzakhani, and A. Mohammadi, Isolation, equidistribution, and orbit closures for the $\SL(2,\RR)$ action on Moduli space, 
\emph{Ann. Math.}, to appear. 

\bibitem[EO]{EskinOkounkov}
A. Eskin and A. Okounkov,  Asymptotics of numbers of branched coverings of a torus and volumes of moduli spaces of holomorphic differentials, 
\emph{Invent. Math.} {\bf 145} (2001), no. 1, 59--103. 

\bibitem[EH1]{EisenbudHarrisLimit}
D. Eisenbud and J. Harris, Limit linear series: basic theory, 
\emph{Invent. Math.} {\bf 85} (1986), no. 2, 337--371. 	

\bibitem[EH2]{EisenbudHarrisWeierstrass}
D. Eisenbud and J. Harris, Existence, decomposition, and limits of certain Weierstrass points, 
\emph{Invent. Math.} {\bf 87} (1987), no. 3, 495--515. 

\bibitem[EM]{EstevesMedeiros}
E. Esteves and N. Medeiros, Limit canonical systems on curves with two components, 
\emph{Invent. Math.} {\bf 149} (2002), no. 2, 267--338. 


\bibitem[F]{Filip} 
S. Filip, Splitting mixed Hodge structures over affine invariant manifolds, arXiv:1311.2350. 



\bibitem[G]{Gendron} 
Q. Gendron, The Deligne-Mumford and the Incidence Variety Compactifications of the Strata of $\Omega\MM_g$, arXiv:1503.03338. 

\bibitem[HMo]{HarrisMorrison}
J. Harris and I. Morrison, Moduli of curves, 
\emph{Graduate Texts in Mathematics} {\bf 187}, Springer-Verlag New York, 1998.

\bibitem[HMu]{HarrisMumford} 
	J. Harris and D. Mumford, 
	On the Kodaira dimension of the moduli space of curves,
	{\em Invent. Math.} {\bf 67} (1982), no.1, 23--88.



\bibitem[H]{HassettQuartics}
B. Hassett, Stable log surfaces and limits of quartic plane curves, 
\emph{Manuscripta Math.} {\bf 100} (1999), no. 4, 469--487. 


\bibitem[KZ]{KontsevichZorich}
M. Kontsevich and A. Zorich, Connected components of the moduli spaces of Abelian differentials with prescribed singularities, 
{\em Invent. Math.} {\bf 153} (2003), no. 3, 631--678. 

\bibitem[Ma]{Maino}
L. Main\`{o}, Moduli space of enriched stable curves, 
{\em Ph.D. Thesis} (1998), Harvard University. 


\bibitem[M\"o]{MoellerHodge}
M. M\"oller, Variations of Hodge structures of a Teichmüller curve, 
\emph{J. Amer. Math. Soc.} {\bf 19} (2006), 327--344. 

\bibitem[Mu]{Mumford}
D. Mumford, Theta characteristics of an algebraic curve, 
\emph{Ann. Sci. \'{E}cole Norm. Sup. (4)} {\bf 4} (1971), 181--192. 

\bibitem[N]{Nori}
M. Nori, \emph{personal communications}.  


\bibitem[O]{OssermanNoncompact}
B. Osserman, Limit linear series for curves not of compact type, 
arXiv:1406.6699. 

\bibitem[W]{Wolpert}
S. Wolpert, Infinitesimal deformations of nodal stable curves. 
\emph{Adv. Math.} {\bf 244} (2013), 413--440. 

\bibitem[Z]{Zorich}
A. Zorich, Flat surfaces, 
\emph{Frontiers in number theory, physics, and geometry. I}, 437--583, Springer, Berlin, 2006. 

\end{thebibliography}
\end{document}